 \newtheorem{thm}{Theorem}
 \newtheorem{lem}[thm]{Lemma}
 \newtheorem{cor}[thm]{Corollary}
 \newtheorem{defn}[thm]{Definition}
 \newtheorem{rem}[thm]{Remark}
 \numberwithin{equation}{section}
 \numberwithin{thm}{section}
 \newcommand{\al}{\alpha}
 \newcommand{\be}{\beta}
 \newcommand{\ld}{\lambda}
 \newcommand{\de}{\delta}
 \newcommand{\ep}{\epsilon}
 \newcommand{\Si}{\Sigma}
 \newcommand{\si}{\sigma}
 \newcommand{\om}{\omega}
 \newcommand{\Om}{\Omega}
 \newcommand{\ga}{\gamma}
 \newcommand{\Ga}{\Gamma}
 \newcommand{\ka}{\kappa}
 \renewcommand{\th}{\theta}
 \newcommand{\Th}{\Theta}
 \newcommand{\E}{\mathcal{E}}
 \newcommand{\F}{\mathcal{F}}
 \renewcommand{\H}{\mathcal{H}}
 \newcommand{\A}{\mathscr{A}}
 \renewcommand{\S}{\mathscr{S}}
 \newcommand{\g}{\mathfrak{g}}
 \renewcommand{\k}{\mathfrak{k}}
 \newcommand{\G}{\mathscr{G}}
 \newcommand{\YMH}{\mathcal{YMH}}
 \newcommand{\YM}{\mathcal{YM}}
 \newcommand{\C}{\mathscr{C}}
 \newcommand{\D}{\mathbb{D}}
 \newcommand{\z}{\mathbf{z}}
 \newcommand{\x}{\mathbf{x}}
 \newcommand{\M}{\mathscr{M}}
 \newcommand{\Hol}{\text{Hol}}
 \newcommand{\Real}{\mathbb{R}}
 \newcommand{\Integer}{\mathbb{Z}}
 \newcommand{\Abs}[1]{\left\vert#1\right\vert}
 \newcommand{\abs}[1]{\vert#1\vert}
 \newcommand{\norm}[1]{\Vert#1\Vert}
 \def\<{\langle} \def\>{\rangle}
 \def\({\left(} \def\){\right)}
 \newcommand{\n}{\nabla}
 \newcommand{\p}{\partial}
 \renewcommand{\t}[1]{\tilde{#1}}
 \newcommand{\bd}{\bar{D}}
 \newcommand{\pa}{\p_{\th, \al}}
 \newcommand{\na}{\n_{\th, \al}}
 \newcommand{\pn}{\p_{\th, \al_n}}
 \newcommand{\X}{\mathcal{X}}
 \newcommand{\dt}[1]{\frac {d{#1}} {dt}}
\begin{document}

\title{Convergence of Yang-Mills-Higgs fields}

\author{Chong Song}

\date{\today}

\maketitle

\begin{abstract}
In this paper, we study the convergence of Yang-Mills-Higgs fields defined on fiber bundles over Riemann surfaces, where the fiber is
a compact symplectic manifold and the conformal structure of the underlying surface is allowed to vary. We show that away from the
nodes, the YMH fields converges, up to gauge, to a smooth YMH field modulo finitely many harmonic spheres, while near the nodes
where the conformal structure degenerates, the YMH fields converges to a pair consisting of a flat connection and a twisted geodesic
(with potential). In particular, we generalize the recent compactness results on both harmonic maps from surfaces and twisted holomorphic curves to general YMH fields.

\end{abstract}

\tableofcontents


\section{Introduction}

\subsection{Definition and motivation}

Suppose $\Si$ is a compact Riemann surface, $G$ is a compact connected Lie group endowed with a metric which is invariant under the adjoint action, and $\g$ is its Lie algebra . Suppose $(M, \om)$ is a compact symplectic manifold which supports a Hamiltonian action of $G$. Take an almost complex structure $J$ and a compatible metric on $M$ which is bi-invariant under the action. Let $P$ be a principal $G$-bundle on $\Si$ and $\F = P \times_G M$ be the fiber bundle associated to $P$. Then $G$ extends to an equivariant action on $\F$, which gives a moment map $\mu: \F \to P\times_{ad}\g$. Denote by $\A$ the space of smooth connections on $P$ and $\S$ the space of smooth sections of $\F$. Then for a fixed central element $c\in Z(\g)$, the \emph{Yang-Mills-Higgs(YMH) functional} for a pair (or \emph{configuration}) $(A, \phi) \in \A\times \S$ is defined by
\begin{equation*}
  \YMH(A, \phi) = \int_\Si|F_A|^2dV + \int_\Si|D_A\phi|^2dV + \int_\Si|\mu(\phi) - c|^2dV,
\end{equation*}
where $D_A$ is the covariant derivative induced by $A$ and $F_A$ is the curvature.

The critical points of the YMH functional are called \emph{Yang-Mills-Higgs fields}, which are pairs $(A,\phi)\in \A\times \S$ satisfying the following Euler-Lagrangian equation
\begin{equation}\label{e:el}
  \left\{
  \begin{aligned}
  D_A^*D_A\phi &= -\nabla\mu(\phi)(\mu(\phi)-c),\\
  D_A^* F_A &= - \phi^* D_A\phi.
  \end{aligned}
  \right.
\end{equation}

The YMH functional emerges naturally as an action functional in gauge theory, which arises in the research of electromagnetic phenomena, including the Ginzburg-Landau theory on superconductivity. The YMH fields, which stand for critical status in physics, play a central role in the theory. The YMH theory has been extensively studied in last several decades and become a cornerstone in particle physics and quantum field theories \cite{JT}. Besides its physical significance, it contains many important objects of mathematical interests as special cases. For example, the pseudo-holomorphic curves, harmonic maps and Yang-Mills fields are special YMH fields.

In this paper, we study the convergence and blow-up behavior of a sequence of YMH fields. In particular, the conformal structure of the underlying surface is allowed to vary. The motivation of our study originated from two aspects as follows.

Firstly, the compactness of YMH fields is closely related to the $\sigma$-modeland construction of symplectic invariants(cf. \cite{W}) . The Gromov-Witten(GW) invariant, which is built by the methods of holomorphic curves, has now become a basic tool in the study of global structures of symplectic manifolds. A recent development is a generalization of GW invariant, the so-called Hamiltonian GW invariants, on symplectic manifolds which admits a Hamiltonian group action\cite{CGS,M}. The construction of such an invariant is based on a proper compactification of the moduli space of \emph{twisted holomorphic maps} (also known as \emph{symplectic vortices})\cite{MT}.  More precisely, the twisted holomorphic maps is a special kind of YMH fields, namely, the minimizers of YMH functional, which are solutions to the vortex equations
\begin{equation}\label{e:vortex}
  \left\{
  \begin{aligned}
  &\bar{\partial}_A\phi = 0,\\
  &\iota_v F_A + \mu(\phi) = c.
  \end{aligned}
  \right.
\end{equation}
In \cite{MT}, Mundet i Riera and Tian give a notion of stable twisted holomorphic map and construct a compactification of the moduli space where the conformal structure is allowed to vary. This would lead to the definition of a virtual moduli cycle and a new quantum product in equivariant cohomology. For more related results in this direction, we refer to \cite{CGMS,O,X,Zi} and references therein. It is also worthy mentioning that, by the Hitchin-Kobayashi correspondence, the moduli space of vortices corresponds to the moduli space of stable holomorphic bundles \cite{B,M1,TY}. On the other hand, there do exist YMH fields which are not minimizers. See for example \cite{P1,T1,T2}. Thus a natural problem is to study the convergence of general YMH fields, which would lead to certain compactification of moduli space of general YMH fields.

Secondly, the convergence and blow-up analysis of YMH fields is of independent interest from the view of geometric analysis. In \cite{Zh}, Zhang considered YMH fields on vector bundles over compact Riemannian manifolds and showed the compactness of YMH fields up to gauge transformations. However, in our non-linear settings where the fibre is a compact manifold $M$, there are geometric obstructions for the convergence of YMH fields. This could be easily seen from the special case that, if the fiber bundle is simply $\F=\Si\times M$ and the connection $A$ is trivial, then YMH fields are just harmonic maps from surfaces $\Si$ to manifold $M$. The research of harmonic maps from surfaces has brought major impact to geometric analysis since Sacks and Uhlenbeck's pioneering work~\cite{SU}. There are numerous works related to the blow-up analysis of harmonic maps. Note that the most interesting case happens when the base manifold is a Riemann surface, as in our settings, since the energy $\norm{D_A \phi}_{L^2}$ is conformal invariant in dimension 2. A remarkable result is the bubble tree convergence of harmonic maps from surfaces shown by Parker~\cite{P}. As we will see below, the blow-up behavior of a sequence of general YMH fields brings new challenges and exhibits new phenomenons, while sharing a lot in common with harmonic maps. In fact, our result unifies many classical results covered by the general framework of YMH theory, including the compactness of holomorphic curves, twisted holomorphic curves, (gauged) harmonic maps from surfaces and Yang-Mills fields on dimension 2.

\subsection{Main Results}

Before explaining the results we achieved in this paper, let's first recall a related convergence result of YMH fields, in the special case where the conformal structure of the underlying surface is fixed. In a previous paper \cite{S}, the author used a Sacks-Uhlenbeck type $\al$-YMH functional to seek min-max YMH fields and showed the convergence of a sequence of critical points of $\al$-YMH functionals as $\al$ approaches 1. By letting $\al = 1$, we can easily obtain the convergence behavior for a sequence of YMH fields. It turns out that in this case the convergence is analogous to the bubble tree convergence of harmonic maps from surfaces (cf.\cite{P}).
\begin{thm}[\cite{S}]\label{t:fixed-metric}
Let $(\Si, h, j)$ be a fixed Riemann surface. Suppose $\{(A_n, \phi_n)\}_{n=1}^\infty\subset \A\times\S$ is a sequence of smooth YMH fields on $\Si$ with bounded YMH energy.  Then there exists a smooth YMH field $(A_\infty, \phi_\infty) \in \A\times\S$ and finitely many points $\mathbf{x} = \{x_1, x_2, \cdots, x_k\}\subset \Si$, such that $(A_n, \phi_n)$ sub-converges, up to gauge, to $(A_\infty, \phi_\infty)$ in $C^\infty_{loc}$ on $\Si\setminus\mathbf{x}$. Moreover, there exists finitely many harmonic maps $w_{ij}: S^2 \to M$ where $1\le i\le k$ and $0\le j \le l_k$, such that
\begin{equation}\label{e:energy-identity}
\lim_{n\to \infty} \YMH(A_n, \phi_n) = \YMH(A_\infty, \phi_\infty) + \sum_{i,j}\E(w_{ij}),
\end{equation}
where $\E(w_{ij}) = \norm{dw_{ij}}_{L^2}^2$ is the energy of $w_{ij}$.
\end{thm}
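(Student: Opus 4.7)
The plan is to follow the now-classical Sacks-Uhlenbeck-Parker bubbling paradigm, adapted to the gauge-theoretic setting. First I would establish a small-energy $\ep$-regularity theorem: there exists $\ep_0>0$ such that whenever $(A,\phi)$ is a smooth YMH field on a geodesic ball $B_{2r}\subset \Si$ with $\YMH(A,\phi;B_{2r})<\ep_0$, one can find a local Uhlenbeck Coulomb gauge in which the coupled system \eqref{e:el} becomes elliptic, yielding uniform $C^k$ bounds on $(A,\phi)$ over $B_r$ depending only on $\ep_0$, $k$ and $r$. Bootstrapping via \eqref{e:el} then gives $C^\infty_{loc}$ compactness modulo gauge on any open set on which the local YMH density stays below $\ep_0$.

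Next I would define the finite blow-up set
\[
 \x = \Bigl\{ x\in \Si : \liminf_{r\to 0}\liminf_{n\to\infty} \YMH(A_n,\phi_n; B_r(x)) \geq \ep_0 \Bigr\},
\]
which is finite by the uniform energy bound. On $\Si\setminus \x$ one covers a relatively compact subdomain by small-energy balls, uses Uhlenbeck's patching to assemble the local Coulomb gauges into a global gauge $g_n$, and extracts a diagonal subsequence so that $(g_n^*A_n, g_n^*\phi_n) \to (A_\infty,\phi_\infty)$ in $C^\infty_{loc}(\Si\setminus \x)$, the limit solving \eqref{e:el}. A removable-singularity theorem for YMH fields, in the spirit of Uhlenbeck's theorem for Yang-Mills and Sacks-Uhlenbeck for harmonic maps, then extends $(A_\infty,\phi_\infty)$ smoothly across each $x_i\in \x$; the extra factors of $\ld^2$ absorbed by the Yang-Mills and potential densities under rescaling in dimension $2$ make the Yang-Mills and moment-map contributions harmless in this step.

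For the bubbling analysis, at each $x_i\in \x$ I would pick scales $\ld_n\to 0$ and centers $x_n\to x_i$ maximizing $\ep_0/2$-concentration, rescale by $y\mapsto x_n+\ld_n y$, and transport connection/section accordingly. Under this conformal blow-up the Yang-Mills energy $\int|F_A|^2$ and the potential term $\int|\mu(\phi)-c|^2$ acquire a factor of $\ld_n^2$ and thus vanish, while the section energy $\int|D_A\phi|^2$ is conformally invariant. Hence the rescaled pairs converge, after gauge, to a pair $(A_\infty^{(i)},\phi_\infty^{(i)})$ on $\Real^2$ with $F_{A_\infty^{(i)}}=0$ and $\mu(\phi_\infty^{(i)})=c$; trivializing the flat connection and invoking Sacks-Uhlenbeck's removable singularity upgrades the section to a harmonic map $w_{i0}:S^2\to M$ with energy at least $\ep_0/2$. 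Iterating at secondary concentration points produces finitely many generations of harmonic spheres $w_{ij}$, the process terminating because each bubble carries a definite amount of energy.

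The most delicate step, and the one I expect to be the main obstacle, is the energy identity \eqref{e:energy-identity}, equivalently the absence of energy loss in the ``necks'' joining the base limit to the bubbles. After arranging that successive bubbles are separated by long conformal cylinders $[-T_n,T_n]\times S^1$ with $T_n\to\infty$ and small $L^\infty$ energy density, one must show that $\int|D_A\phi|^2$ on these annuli tends to zero. The gauge and potential contributions are controlled by their rescaling behavior, but the section term requires gauging $A_n$ into Coulomb/radial form on each neck, reducing the $\phi$-equation to a perturbed harmonic map equation with controllably small error, and running a Pohozaev or three-annulus argument in the spirit of Ding-Tian, Qing-Tian and Parker to exclude angular energy in the limit. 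Summing the contributions from the smooth limit, the harmonic sphere bubbles and the vanishing necks then yields \eqref{e:energy-identity}.
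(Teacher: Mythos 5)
Your proposal follows essentially the same route as the paper, which defers the detailed proof to \cite{S} and sketches it in Section 5: $\ep$-regularity in a local Coulomb gauge for the coupled elliptic system, a finite concentration set from the energy bound, removable singularities, conformal rescaling under which the curvature and potential terms carry factors of $\ld_n^2$ and vanish so that the bubbles are harmonic spheres with an energy gap, and a Parker/Ding--Tian type neck analysis for the energy identity. One small correction: the rescaling only shows that the bubble's Yang--Mills and potential energies vanish and that the limiting equation is the harmonic map equation; it does not force $\mu(\phi_\infty^{(i)})=c$ pointwise, though nothing in your argument actually uses that claim.
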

\begin{rem}
The last equality (\ref{e:energy-identity}) is called the energy identity which is of special interest, since it asserts that there is no energy loss during the blowing up process. Actually, a more refined analysis shows that the image of the limit section $\phi_\infty$ and the harmonic spheres $w_{ij}$ are connected, i.e. there is no neck between the bubbles. (A proof of this statement is essentially contained in the current paper as a special case.) Thus the convergence of YMH fields on a Riemann surface with fixed metric is well understood.
\end{rem}

Theorem~\ref{t:fixed-metric} would lead to a compactification of moduli space of YMH fields with fixed complex structure. However, when the complex structure $j$ of the underlying surface $\Si$ is allowed to vary, the compactification becomes more complicated and new difficulties arise.

In the special case of compactification of the moduli space of twisted holomorphic maps, two interesting phenomenons are observed in ~\cite{MT}, which do not appear in the classical Gromov compactness of holomorphic curves. First, there exist a new kind of bubbles which do not have a positive lower energy bound. Second, there might be a neck arising near the nodes, which consists of a chain of gradient flow lines of the Hamiltonian. Similar phenomenons appear in the research of harmonic maps from surfaces. During the compactification of moduli space of harmonic maps from nodal Riemann surfaces, Chen and Tian~\cite{CT} found that near the nodes, there might exist a geodesic at the neck which connects the limit map and bubbles. Zhu~\cite{Z} also considered the convergence of a sequence of harmonic maps from degenerating Riemann surfaces and showed that there is only a modified energy identity depending on the degenerating speed of conformal structure. Later, Chen, Li and Wang~\cite{CLW} gave a more refined analysis on the same problem and showed that the neck actually converges to a geodesic. Moreover, they give a formula of the length of the neck.

Now we explain the main results of this paper. Suppose $(\Si_n, h_n, j_n)$ is a sequence of Riemann surfaces which converges to a nodal surface $(\Si, h, j, \z)$ with nodes $\z$. Let $P(\Si_n)$ denote the principal $G$-bundle over $\Si_n$ and $\F(\Si_n) = P(\Si_n)\times_G M$ be the associated bundle. Let $\A(\Si_n)$ and $\S(\Si_n)$ denote the space of smooth connections on $P(\Si_n)$ and smooth sections of $\F(\Si_n)$ respectively. Suppose $\{(A_n, \phi_n)\}_{n=1}^\infty\subset \A(\Si_n)\times\S(\Si_n)$ is a sequence of YMH fields with bounded energy. Namely, the sequence satisfies the Euler-Lagrange equation~(\ref{e:el}) and there exists a constant $C$ such that $ \YMH_{h_n}(A_n, \phi_n) \le C$.

The convergence result can be divided into two parts, namely, away from the nodes and near the nodes. On every compact subset away from the nodes, since the metric converges smoothly, the convergence is very similar to the case of fixed metric guaranteed by Theorem~\ref{t:fixed-metric}, except that the singularities of the limit YMH field at the nodes cannot be removed in general.

\begin{thm}\label{t:main1}
  Suppose $(\Si_n, h_n, j_n)$ is a sequence of Riemann surfaces which converges to a nodal surface $(\Si, h, j, \z)$ with nodes $\z$. Suppose $\{(A_n, \phi_n)\}_{n=1}^\infty\subset \A(\Si_n)\times\S(\Si_n)$ is a sequence of YMH fields with bounded YMH energy. Then there exist limit fiber bundles $P(\Si\setminus \z)$ and $\F(\Si\setminus \z)$ defined over $\Si\setminus \z$, and a YMH field $(A_\infty, \phi_\infty)\in \A(\Si\setminus \z)\times \S(\Si\setminus \z)$ such that the following hold:
  \begin{enumerate}
    \item For any compact subset $\Si' \subset \Si\setminus \z$, there exist finitely many points $\mathbf{x} = \{x_1, x_2, \cdots, x_k\}\subset \Si'$ such that $(A_n, \phi_n)$ sub-converges, up to gauge, to $(A_\infty, \phi_\infty)$ in $C^\infty_{loc}$ on $\Si'\setminus \x$.
    \item There exist finitely many harmonic spheres $w_{ij}: S^2 \to M$ where $1\le i\le k$ and $0\le j \le l_k$ such that
      \begin{equation*}
          \lim_{n\to \infty} \YMH_{h_n}(A_n, \phi_n)|_{\Si'} = \YMH_h(A_\infty, \phi_\infty)|_{\Si'} + \sum_{i,j}\E(w_{ij}),
      \end{equation*}
      where $\E(w_{ij}) = \norm{dw_{ij}}_{L^2}^2$ is the energy of $w_{ij}$. Moreover, the images of the tree bubbles $w_{ij}$ and the limit section $\phi_\infty$ are connected.
    \item For each node $z\in \z$, the limit holonomy $\Hol(A_\infty, z)$ of $A_\infty$ around $z$ exists. In particular, if $\Hol(A_\infty, z) = id$, then $(A_\infty, \phi_\infty)$ can be extended over $z$ to a smooth YMH field.
  \end{enumerate}
\end{thm}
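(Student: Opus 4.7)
My strategy is to localize on compact subsets $\Si' \subset \Si \setminus \z$, where the metrics $h_n$ converge smoothly to $h$, and run a bubble-tree analysis essentially parallel to that of Theorem~\ref{t:fixed-metric}. The three conclusions then correspond to: (a) $\ep$-regularity together with gauge fixing, (b) rescaling at concentration points plus a neck analysis, and (c) holonomy control and removable singularities at the nodes.

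First, I would set up a local $\ep$-regularity statement: there exists $\ep_0 > 0$ such that whenever a smooth YMH field $(A,\phi)$ on a small disc $B_r(x)$, measured in any metric uniformly close to $h$, has $\YMH(A,\phi; B_r(x)) < \ep_0$, then after a Coulomb-type gauge transformation one obtains uniform $C^k$ bounds on $B_{r/2}(x)$. This is essentially the local regularity proved in~\cite{S}, and it transfers verbatim since the $h_n$ are uniformly equivalent to $h$ on $\Si'$. A standard covering argument then produces a finite set $\x \subset \Si'$ of concentration points with $\#\x \le C/\ep_0$. On $\Si' \setminus \x$ one invokes Uhlenbeck's gauge-fixing theorem to get local Coulomb gauges in which $\{A_n\}$ is bounded in $C^k$, and patches the local limits via transition gauge transformations to obtain a limit bundle over $\Si \setminus \z$ and a smooth limit YMH field $(A_\infty, \phi_\infty)$ with $C^\infty_{loc}$ convergence up to gauge. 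This gives part~(1).

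For part~(2), at each $x_i \in \x$ I would perform the standard rescaling: choose scales $\ld_n^i \to 0$ and isothermal coordinates so that the rescaled configurations have energy exactly $\ep_0$ on the unit disc. Since the Dirichlet term $|D_A\phi|^2$ is conformally invariant in dimension two while $|F_A|^2$ and $|\mu(\phi)-c|^2$ carry positive scaling weight, the rescaled connections converge (up to gauge) to a flat trivial connection on $\Real^2$ and the rescaled sections to a non-constant finite-energy harmonic map $\Real^2 \to M$, which extends to the harmonic sphere $w_{i0}$. Iterating at the remaining scales produces the full bubble tree $\{w_{ij}\}$. The most delicate point, and the one I expect to be the main obstacle, is the neck analysis needed for both the energy identity and the no-neck (connectedness) conclusion. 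My plan is to decompose each neck region into dyadic conformal annuli and apply a Pohozaev-type identity on each to control the radial energy by the angular energy, yielding exponential decay of the energy density across the neck. Because $\Si'$ sits at positive distance from the nodes, the conformal moduli of the neck annuli remain bounded, and this uniform control is precisely what prevents the Hamiltonian term $|\mu(\phi)-c|^2$ from supporting a non-trivial gradient-flow segment; this is what distinguishes the interior blow-up picture here from the degenerating-neck picture near $\z$ treated in the companion theorem.

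Finally, for part~(3), fix a node $z \in \z$ and a punctured conformal disc $D^*$ around it on which $(A_\infty,\phi_\infty)$ has finite YMH energy. Uhlenbeck's removable-singularity theorem applied to $A_\infty$ on $D^*$ implies that parallel transport of $A_\infty$ along the loops $\{|w|=r\}$ stabilizes as $r \to 0$, up to conjugation, giving a well-defined $\Hol(A_\infty, z) \in G$. When this holonomy is trivial, a local gauge transformation extends $A_\infty$ smoothly across $z$; the Euler-Lagrange system~(\ref{e:el}) then becomes a uniformly elliptic system for $\phi_\infty$ with finite Dirichlet energy, and the removable-singularity theorem for finite-energy harmonic sections (applied to the coupled system exactly as in~\cite{S}) extends $\phi_\infty$ smoothly across $z$ as well.
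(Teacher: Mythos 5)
Your proposal follows essentially the same route as the paper: uniform $\ep$-regularity on compact subsets away from the nodes (where $h_n\to h$ smoothly), a finite set of concentration points, Uhlenbeck gauge fixing and patching to obtain the limit bundle and limit pair, rescaling at the concentration points to extract harmonic spheres together with a Parker-type neck analysis for the energy identity and connectedness, and a holonomy argument at the nodes. The one point to correct is in part~(3): the fact you need is not Uhlenbeck's removable-singularity theorem (which would wrongly remove the singularity of $A_\infty$ outright, contradicting the possibility of non-trivial holonomy) but the limit-holonomy theorem for connections with $L^p$-bounded curvature, $p>1$ (Theorem~\ref{t:holonomy}, proved in \cite{S}; cf.\ \cite{SS}), whose conclusion --- existence of $\Hol(A_\infty,z)$, with extension across $z$ exactly when it is trivial --- is what your argument actually uses.
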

\begin{rem}
Although the singularities of the limit pair at the nodes can not be removed in general, the asymptotic behavior of $(A_\infty, \phi_\infty)$ as we approach the nodes can be traced.  See Theorem~\ref{t:limit-pair-at-node} below for more details.
\end{rem}

For the most interesting part near the nodes, it is equivalent to consider the convergence behavior of YMH fields over a sequence of cylinders whose length tends to infinity. Indeed, we will choose a family of "canonical" metrics near the nodes(see Section~\ref{s:metric}), such that the collar area near a node is isomorphic to a long cylinder $\C_n = [-T_n, T_n]\times S^1$ and the metric on $\C_n$ decays exponentially. That is, the metric has the form $g_n = \ld_n^2(dt^2+d\th^2)$ such that
\begin{equation*}
  |\ld_n(t)| \le C\de_n\exp(|t| - T_n), \quad \forall t\in [-T_n, T_n],
\end{equation*}
where $\de_n\to 0$ and $T_n= -\ln\de_n \to \infty$ as $n\to \infty$.

After restricting ourselves to the cylinders, we obtain a sequence of bundles and a sequence the YMH fields over $\C_n$ which we still denote by $\{(A_n, \phi_n)\}_{n=1}^\infty$. Since $G$ is assumed connected, the bundles are trivially and we may identify the section $\phi_n$ with a map $u_n: \C_n \to M$. By a conformal change to the standard metric $g_n\to g_0=dt^2 + d\th^2$, the problem is reduced to the convergence of $(A_n, u_n)$ over the flat cylinders $(\C_n, g_0)$ which satisfies that following conditions. First, under the standard metric $g_0$, the energy of $(A_n, u_n)$ is bounded by
\begin{equation}\label{e01}
\norm{D_{n} u_n}_{L^2(\C_n)} \le C, \ \  \norm{F_{n}}_{L^2(\C_n)} \le C\de_n,
\end{equation}
where $D_n$ denotes the derivative induced by $A_n$ and $F_n$ denotes the curvature. Second, $(A_n, u_n)$ satisfies the rescaled Euler-Lagrangian equation
\begin{equation}\label{e02}
  \left\{
  \begin{aligned}
  D^*_n D_n u_n &= -\ld_n\nabla\mu(u_n)(\mu(u_n) - c),\\
  D^*_n F_n &= -\ld_n u_n^* D_n u_n.
  \end{aligned}
  \right.
\end{equation}

To achieve desired estimates, we first choose a suitable gauge, the so-called \emph{balance temporal gauge} on the cylinder, such that the connection has the form $A_n = a_n d\th$ where $a_n \in C^\infty(\C_n, \g)$. Since dimension 2 is sub-critical for the Yang-Mills functional of the connection, the convergence of the connection $A_n$ to a limit flat connection $A_\infty$ is obvious.

On the other hand, the convergence of the map $u_n$ is more complicated due to the conformal invariance of the energy $\norm{D_nu_n}_{L^2}$ in dimension 2. Namely, there might be energy concentrations on the cylinder. There are two possibilities. The first is energy concentration near a point which give rise to a finite number of \emph{tree bubbles}, i.e. harmonic spheres. The second one is energy concentration on a "drifting" sub-cylinder(see Section~\ref{s2}). In this case, we obtain a \emph{twisted bubble}(or \emph{connecting bubble}), which is a \emph{twisted harmonic map} satisfying the equation
\begin{equation*}
D_{A_\infty}^*D_{A_\infty} v = 0.
\end{equation*}
Generally, due to the holonomy of connection $A_\infty$, the behavior of twisted bubbles is quite different from tree bubbles. However, we are still able to show that there are at most finitely many bubbles. It follows by an induction that, after finitely many steps of blowing-up's, we are left with a sequence on the cylinder where no energy concentration occurs.

After these preparations, we are now ready to state the main results on the cylinders. We denote the energy of $u_n$ on $\C_{n}$ by
$$ \E(u_n, A_n, \C_n) = \int_{\C_n}|D_n u_n|^2 dtd\th. $$
Suppose the connection has the form $A_n=a_n d\th$ in the balanced temporal gauge. Denote by $\p_{\th, a_n} := \p_\th + a_n$ the partial differential operator induced by $A_n$ along the $\th$-direction and let
\[ e_n := \int_{\{0\}\times S^1}(|\p_tu_n|^2-|\p_{\th, a_n} u_n|^2)d\th.\]
We also introduce two quantities
\[\mu:= \lim_{n\to \infty}T_n e_n, \qquad \nu:= \lim_{n\to \infty} T_n\sqrt{e_n}. \]

\begin{thm}\label{t:main3}
Suppose $\{(A_n, \phi_n)\}_{n=1}^\infty\subset \A(\C_n)\times\S(\C_n)$ is a sequence of YMH fields over flat cylinders $(\C_n, g_0)$, which satisfies~(\ref{e01}) and (\ref{e02}). Then after choosing the balanced temporal gauge and a sub-sequence, $A_n$ converges in $C^\infty_{loc}$ to a flat connection $A_\infty=\al_\infty d\th$ where $\al_\infty\in \g$ is constant. Moreover, if there is no energy concentration, then the limit of $u_n(\C_n)$ falls into the fixed point set of $\exp(2\pi \al_\infty)\in G$ and the following holds.
\begin{enumerate}
\item If $A_n$ is non-degenerating, then
 \begin{enumerate}
   \item the energy on the neck is
   \[ \lim_{n\to \infty}\E(u_n, A_n, \C_n) = 2\mu. \]
   \item the limit of $u_n(\C_n)$ is a twisted geodesic, the length of which equals to $\frac{2}{\sqrt{2\pi}}\nu$ if $\nu$ is finite; if $\nu  = +\infty$, then the neck contains an infinitely long twisted geodesic.
 \end{enumerate}
\item If $A_n$ is degenerating, then
 \begin{enumerate}
   \item the energy on the neck is
   \[ \lim_{n\to \infty}\E(u_n, A_n, \C_n) = 2\lim_{n\to \infty}\int_{\C_n}|(A_n-A_\infty)u_n|^2 + 2\mu. \]
   \item the limit of $u_n(\C_n)$ is a closed orbit of a geodesic with potential.
 \end{enumerate}
\end{enumerate}
\end{thm}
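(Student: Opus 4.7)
The plan is to use the balanced temporal gauge to decouple the $t$- and $\th$-directions, derive a Hopf-type conservation law for a one-variable slice quantity, and then analyze the asymptotics separately according as the limit holonomy is non-trivial or trivial.

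First, in the balanced temporal gauge $A_n = a_n(t,\th) d\th$, so the curvature reduces to $F_n = (\p_t a_n)\, dt\wedge d\th$. The bound $\norm{F_n}_{L^2(\C_n)} \le C\de_n$, combined with the second Euler-Lagrange equation in (\ref{e02}) whose right-hand side is exponentially small in $\ld_n$, drives $a_n$ to a flat limit $\al_\infty \in \g$ in $C^\infty_{loc}$; the balance condition selects the constant representative, so $A_\infty = \al_\infty d\th$. Pairing the first equation of (\ref{e02}) with $\p_t u_n$ and integrating over $\{t\}\times S^1$ yields, after integration by parts in $\th$ and using $D_{t,n} = \p_t$,
\[ \frac{d}{dt}\int_{\{t\}\times S^1}\left(|\p_t u_n|^2 - |\p_{\th, a_n}u_n|^2 - \ld_n|\mu(u_n) - c|^2\right) d\th = O(\de_n), \]
where the error absorbs contributions of $\p_t a_n$ (i.e.\ $F_n$) and $\p_t \ld_n$. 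Since $T_n\de_n \to 0$, the slice integral $e_n(t)$ is constant in $t$ up to $o(1)$ and hence equals $e_n = e_n(0)$ throughout $\C_n$.

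Under the no-concentration hypothesis, the $\ep$-regularity for YMH fields developed in \cite{S} furnishes uniform $C^k_{loc}$ bounds on $u_n$ (after gauge fixing), so smooth local limits can be extracted after the appropriate rescaling. In the non-degenerate case $\exp(2\pi\al_\infty)\ne \mathrm{id}$, the limit map must take values in the fixed point set of this isometry; on the normal directions to this fixed locus the operator $\p_{\th, \al_\infty}^2$ has a positive spectral gap on $S^1$, which via a three-annulus / exponential-decay argument propagates across the long cylinder and forces $\p_{\th, a_n} u_n \to 0$ in $L^2(\C_n)$. The conservation law then gives $\int_{\{t\}\times S^1}|\p_t u_n|^2 d\th \to e_n$ uniformly in $t$, hence
\[ \E(u_n, A_n, \C_n) = \int_{\C_n}\left(|\p_t u_n|^2 + |\p_{\th, a_n}u_n|^2\right) dt\, d\th \longrightarrow 2T_n e_n = 2\mu. \]
A change of variable $s = \sqrt{e_n}\, t$ renormalizes $\p_s u_n$ to unit $L^2$-norm on each slice, so the limit is a twisted geodesic whose length on $[-\sqrt{e_n}T_n, \sqrt{e_n}T_n]$ tends to $\frac{2}{\sqrt{2\pi}}\nu$ (or $\infty$ if $\nu = +\infty$). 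In the degenerate case $\al_\infty = 0$, the tangential derivative is not forced to vanish, but writing $\p_{\th, a_n}u_n = \p_\th u_n + (a_n - \al_\infty)\cdot u_n$ isolates precisely $\int_{\C_n} |(A_n - A_\infty)u_n|^2$ as the extra tangential contribution to the neck energy, giving the stated modified identity; the conservation law plus the surviving potential term $|\mu - c|^2$ then identifies the limit as a closed $G$-orbit evolving by geodesic flow with potential.

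The principal difficulty lies in the non-degenerate case: $\ep$-regularity alone only gives smallness of $\p_{\th, a_n} u_n$, whereas the energy asymptotics $\E \to 2\mu$ require actual $L^2$-vanishing to leading order. Supplying this needs the spectral gap of $\p_{\th, \al_\infty}^2$ on the normal bundle of the fixed locus to combine, through exponential decay on long cylinders, with the growing length $T_n \to \infty$; this mechanism is also what pins down the limit to be a \emph{twisted} geodesic rather than a general horizontal curve, and controls the asymptotic ratio giving the precise length $\frac{2}{\sqrt{2\pi}}\nu$.
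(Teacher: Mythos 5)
Your overall skeleton --- balanced temporal gauge, the near-conservation of the slice quantity $e_n(t)$ obtained by pairing the equation with $\p_t u_n$, exponential decay of the angular energy via a spectral gap on $S^1$, and a reparametrization in $t$ to extract the (twisted) geodesic and its length --- is the same as the paper's. But the proposal has a genuine gap in how the two cases are set up and handled. You identify ``non-degenerating'' with $\exp(2\pi\al_\infty)\ne id$ and ``degenerating'' with $\al_\infty=0$. That is not the dichotomy in the statement: by Definition~\ref{d:degenerate}, degeneration means the kernel of $\p_{\th,\al_n}$ fails (for large $n$) to contain the kernel of $\p_{\th,\al_\infty}$, equivalently the first positive eigenvalue of $-\p_{\th,\al_n}^2$ collapses, so the Poincar\'e constant $C_{\bar{A}_n}$ of Lemma~\ref{l:poincare} blows up. A sequence with non-trivial limit holonomy can perfectly well be degenerating; in that case the decay rate $\si_n$ in your ``three-annulus / exponential-decay'' step tends to zero and the propagation across a cylinder of length $T_n\to\infty$ yields nothing. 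The uniform gap your argument needs is exactly Lemma~\ref{l:uniform}, which holds precisely in the non-degenerating case in the paper's sense, so your case division does not match the theorem being proved.

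Two further points in the degenerate case. Claiming that the expansion $\p_{\th,a_n}u_n=\p_\th u_n+(a_n-\al_\infty)\cdot u_n$ ``isolates precisely'' $\int_{\C_n}|(A_n-A_\infty)u_n|^2$ requires showing both that $\int_{\C_n}|\p_{\th,\al_\infty}u_n|^2\to 0$ and that the cross term vanishes; neither follows from what you wrote, since the uniform gap for $\p_{\th,\al_n}$ is exactly what fails here. The paper's device is to rewrite the equation with respect to the \emph{fixed} limit connection $A_\infty$, absorbing $(\p_{\th,\al_n}^2-\p_{\th,\al_\infty}^2)u_n=O(\de_n)$ into the right-hand side, and then run the decay argument with the fixed constant $\si_\infty=(1/C_{A_\infty})^{1/2}$; this gives the needed decay and kills the cross term by Cauchy--Schwarz. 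Finally, attributing the limiting potential to ``the surviving term $|\mu-c|^2$'' is incorrect: that term carries the factor $\ld_n$ and is exponentially suppressed. The potential in the limit equation
\begin{equation*}
\(\p_s^2 v_\infty + \ka^2\,\be_\infty^2\cdot v_\infty\)^\top = 0
\end{equation*}
arises from the term $\rho_n^2\be_n^2\cdot u_n$ in $\p_{\th,\al_n}^2u_n$ after the rescaling $t=T_n s$, with $\ka=\lim_{n\to\infty} T_n|\al_n-\al_\infty|$; without identifying this mechanism, conclusion 2(b) is not reached.
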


The image of a twisted geodesic is a closed orbit of a geodesic (see Definition~\ref{d:twisted-geodesic} below). Besides the energy identity, the above result gives a geometric description of the blow-up phenomenon which happens exactly at the nodes. We remark that intuitively, $e_n$ reflects the blowing-up speed of $u_n$, while $T_n$ represents the degenerating speed of node. Thus in the first case, the quantities $\mu$ and $\nu$ measures the competing of the blowing-up and degenerating processes. This is consistent with the recent result \cite{CLW} on harmonic maps. However, things become more complicated due to the interaction of the connection $A_n$. The degeneration of $A_n$ means that there is an expansion of the kernel of the operator $\p_{\th, a_n}$ in the limit(see Section\ref{s:poincare} for a detailed explanation). In this case, the degenerating speed of $A_n$ also get involved, which can be measured by the term $|A_n-A_\infty|$. It is very interesting that the neck turns out to be an orbit of a geodesic with potential, which is related to the famous C. Neumann problem(see for example\cite{R}). In fact, the geodesic with potential is a critical point of the functional
\[ E(v) = \int| d\ga|^2ds + \int (\ga, Q \ga)ds, \]
where $Q$ is a symmetric and non-negative matrix generated by the degeneration of the connection. See Section~\ref{s4} for more details.

\subsection{Perspectives}

Theorem~\ref{t:main1}  and~\ref{t:main3} provides a full picture of convergence of YMH fields over Riemann surfaces. In particular, we obtain an energy identity and a geometric description of the neck. Using these results together with the notion of stable maps due to Kontsevich, the compactification of moduli space of YMH fields with varying conformal structures follows. Although the compactification is complicated in full generality, it could be quite good in some special settings. Here we mention two important cases where our results in this paper can be applied.

\emph{Case 1: Twisted holomorphic maps.}

Recall that twisted holomorphic maps are just minimizing YMH fields which satisfy the first order equations~(\ref{e:vortex}). In particular, on the neck the equation becomes
\[ \bar{\p}_A u := \p_t u + J\p_{\th, A} u = 0. \]
In this case, it is obvious that the quantities $\mu, \nu$ defined in Theorem~\ref{t:main3} vanishes. Therefore, when the connection is non-degenerating, the energy and the length of the neck is zero. Thus the twisted geodesic reduces to a single closed orbit which is fixed by the limit holonomy of the connection. In particular, if the limit holonomy is trivial, then the orbit shrinks to a point and the neck vanishes. For example, when the Lie group is identity($G = \{id\}$) and the connection is trivial, the convergence amounts to the classical Gromov-Witten compactness of holomorphic curves. Another interesting example is when $G$ acts freely on $M$, then the holonomy has to be identity. See the recent paper \cite{V} for a similar result. The neck issue becomes more subtle when the connection is degenerating. It was shown in \cite{MT} that neck turns out to be a gradient line of the Hamiltonian and contains no energy, when the Lie group is simply $S^1$. Here we can apply Theorem~\ref{t:main3} to generalize this result to arbitrary compact connected Lie group, see Section~\ref{s:app} below.

\emph{Case 2: Fixed connection.}

If the sequence of YMH fields are chosen such that $A_n \equiv A$ for some fixed connection $A$, then there is no degeneration of the connection. Thus the non-degenerate case of Theorem~\ref{t:main3} applies and we obtain energy identity and geometric description of the neck. In particular, harmonic maps are just YMH fields with trivial connection. Thus we recover the bubble convergence of harmonic maps from (degenerating) surfaces, and provide a generalization of previously results in \cite{CT,Z,CLW}.

Finally, it is worth mentioning that, as a by-product, we obtain a Poincar\'e type inequality for flat connections on $S^1$ which generalizes the classical Poincar\'e inequality. See Lemma~\ref{l:poincare} below.

\ \\

The paper is organized as follows. Some preliminaries are given in Section~\ref{s:preliminaries}, including basic properties of YMH
fields on a fixed Riemann surface which are proved in our previous paper \cite{S}. For completeness, we also recall the construction of
canonical metrics on degenerating Riemann surfaces. Next, we consider a reduce problem on YMH fields over infinitely long cylinders.
The convergence of the connection part is proved in Section~\ref{s:connection}. The more delicate analysis and estimates for the section part are presented in
Section~\ref{s:section}. With these preparations, in Section~\ref{s:away} we show the convergence of YMH fields away from the
nodes and prove Theorem~\ref{t:main1}. Then we devote the last section to blow-up analysis of YMH
fields near the nodes and give the proof of Theorem~\ref{t:main3}. The main contribution of this paper lies in Section~\ref{s:near} where we emphasis the geometric aspects of the neck.

\section{Preliminaries}\label{s:preliminaries}

\subsection{Yang-Mills-Higgs functional}

Suppose $(\Si, j)$ is a Riemann surface and $(M, \om)$ is a symplectic manifold which supports a symplectic action of a compact connected Lie group $G$. Let $\g$ be the Lie algebra of $G$ and $\g^*$ its dual space. Suppose $P$ is a $G$-principal bundle over $\Si$ and $\F = P\times_G M$ is the associated bundle. For any element $\xi\in \g$, there is an infestimal action of $\xi$ on $M$ which generates a vector field $X_\xi \in \Ga(TM)$. Assume that the action is Hamiltonian, then there is a moment map $\mu:G \to \g^*$ such that
\[ \iota_{X_\xi}\om = d\<\mu, \xi\>. \]
If we take a bi-invariant metric on $\g$ which is invariant under the adjoint action of $G$, then $\g^*$ can be identified with $\g$ and the moment map can be regard as a map $\mu:M \to \g$. Moreover, $\mu$ is equivariant with respect to the coadjoint action on $\g^*$. That is, for any $g\in G$ and $x\in M$, we have $\mu(g\cdot x) = ad_g\mu(x)$. Thus the moment map on $M$ can be extend to $\F$ which gives a map $\mu:\F \to P\times_{ad}\g$.

Let $\A$ denote the space of smooth connections which is an affine space modeled on $\Om^1(P\times_{ad}\g)$ and $\A_{1,2}$ denote the Sobolev completion of $\A$ under Sobolev $W^{1,2}$ norm.
Let $\G := Aut(P) = P\times_{Ad}G$ be the gauge group, where $Ad$ denotes the conjugate action. A gauge transformation $s\in \G$ acts on a connection $A\in \A$ by $s(A) = s^{-1}ds + s^{-1}As$. The curvature of $A$ is a $\g$-valued two-form in $\Om^2(P\times_{ad}\g)$ defined by $F_A := dA + \frac12[A, A]$. The gauge transformation acts on $F_A$ by $s(F_A) = s^{-1}F_As$.

Let $\S = \Ga(\F)$ denote the space of smooth sections of $\F$ and $\S_{1,2}$ denote its Sobolev completion. A connection $A\in \A$ induces an covariant differential operator $D_A$ on $\S$. In fact, $A$ gives a horizontal distribution $H\subset T\F$ and induces a splitting $T\F = H\oplus T\F^v$ where $T\F^v$ is vertical, then the covariant derivative is just $D_A := \pi_A\circ d$, where $\pi_A:T\F \to T\F^v$ is the projection. In a local trivialization $\F|_U \simeq U \times M$, if we use $\{x^i\}_{i=1,2}$ to denote the local coordinates on $U\subset \Si$, then we can identify the section $\phi \in \S$ with a map $u:U \to M$ and write $A|_U = a_idx^i$ where $a_i \in C^\infty(U, \g)$, then we have
\[ (D_A \phi)|_U = du + X_{a_i}(u)dx^i. \]

By fixing a metric $h$ on $\Si$, an invariant almost complex structure $J$ on $M$ and a central element $c\in Z(\g)$, we can define the
\emph{Yang-Mills-Higgs(YMH) functional} for a pair $(A, \phi) \in A_{1,2}\times\S_{1,2}$ by
\[ \YMH_h(A, \phi) = \norm{D_A\phi}_{L^2}^2 + \norm{F_A}_{L^2}^2 + \norm{\mu(\phi) - c}_{L^2}^2. \]
The YMH functional consists of three components of independent interests, namely, the (twisted) energy functional, the Yang-Mills functional
and the Higgs potential, which we denote respectively by
$$\E(A, \phi) := \norm{D_A\phi}_{L^2}^2, \quad \YM(A) := \norm{F_A}_{L^2}^2, \quad \H(\phi) := \norm{\mu(\phi) -
c}_{L^2}^2.$$

Obviously, the YMH functional is invariant under gauge transformation, i.e. for any $s\in \G$, we have
\[ \YMH_h(s(A), s(\phi)) = \YMH_h(A, \phi). \]
Another important feature is that, for a conformal metric $\ld h, \ld>0$ on the 2 dimensional Riemann surface $\Si$, we have
\begin{equation}\label{e:conformal}
  \YMH_{\ld h}(A, \phi) = \norm{D_A \phi}_{L^2, h}^2 + \ld^{-1}\norm{F_A}_{L^2, h}^2 + \ld \norm{\mu(\phi) - c}_{L^2, h}^2.
\end{equation}

The critical points of the YMH fields is called \emph{YMH fields} which satisfy the Euler-Lagrangian equation
\begin{equation}\label{e:el1}
  \left\{
  \begin{aligned}
  D_A^*D_A\phi &= -\nabla\H(\phi),\\
  D_A^* F &= - \phi^* D_A\phi.
  \end{aligned}
  \right.
\end{equation}
Here $D_A^*$ is the dual operator of $D_A$, $\nabla\H(\phi)=(\mu(\phi)-c)\nabla \mu(\phi)$ denotes the $L^2$-gradient of $\H$ and $\phi^* D_A\phi$ acts on any $B\in \Om^1(P\times_{ad}\g)$ by
\[ (\phi^* D_A\phi, B) = (D_A\phi, B\phi). \]
In view of the conformal property (\ref{e:conformal}), after a conformal change of the metric $\ld h \to h$, the Euler-Lagrangian
equation has the form
\begin{equation*}
  \left\{
  \begin{aligned}
  D_A^*D_A\phi &= -\ld\nabla\H(\phi),\\
  D_A^* F &= - \ld\phi^* D_A\phi.
  \end{aligned}
  \right.
\end{equation*}
For a detailed deduction of the above equations, we refer to \cite{S} and \cite{L}. In the rest of this paper, we will omit the subscriptions and simply denote $D = D_A, F = F_A, \YMH = \YMH_h$ if no confusions can occur.

\subsection{epsilon regularity and removable singularity}

Let $(A, \phi)\in \A\times \S$ be a YMH field with bounded YMH energy. We first recall the Euler-Lagrangian equation for $(A,\phi)$ in local coordinates.

Taking a geodesic ball $U\subset \Si$ and a trivialization of the fiber bundles $P(U)$ and $\F(U)$, we may regard the section $\phi$ as a map $u:U \to M$ and the connection $A$ as a $\g$ valued 1-form. The exterior covariant derivative $D$ can by written as $D = d + A$ and its adjoint is simply $D^* = d^* + A^*$. Since $A$ is compatible with the metric, we have $A^*=-A$. Moreover, the curvature $F$ has the form $F = dA + \frac12[A, A]$. We embed the compact manifold $M$ into an Euclidean space $\Real^K$ and denote the second fundamental form by $\Ga$. A simple calculation (see \cite{S} for example) shows that in this trivialization, equation (\ref{e:el1}) is equivalent to
\begin{equation}\label{e:el-loc}
  \left\{
  \begin{aligned}
  &\Delta u + \Ga(u)(du, du) + d^*A\cdot u + 2A\cdot du + A^2\cdot u = \nabla\H(u_n);\\
  &d^*dA + [A, dA] + [A, [A, A]] = - u^* ( du+ Au).
  \end{aligned}
  \right.
\end{equation}

The first equation in (\ref{e:el-loc}) for the map $u$ is analogous to the equation for harmonic maps. On the other hand, the second one for the connection $A$ is not elliptic, due to the gauge invariance of the Yang-Mills functional $\YM(A)=\norm{F}_{L^2}^2$. To overcome this problem, we need to choose the \emph{Coulomb gauge}, such that $d^*A = 0$. By Uhlenbeck's theorem~\cite{Uh}, such a gauge always exists in a geodesic ball if the $L^2$-norm of the curvature is small. After fixing the Coulomb gauge, (\ref{e:el-loc}) becomes a coupled elliptic system and the techniques in the analysis for harmonic maps can be applied. Thus many results analogous to the classical results for harmonic maps essentially follows. For
example, in view of the regularity results for harmonic maps~\cite{RS,ST}, it is easy to verify that any YMH filed in $\A_{1,2}\times \S_{1,2}$ is actually smooth. Moreover, we have the following $\ep$-regularity and removable singularity theorems. For a detailed proof of the following theorems, one can refer to~\cite{S}.

\begin{lem}[$\ep$-regularity]\label{l:reg1}
Let $\D$ be the unit disk and $\D'$ be the disk with radius $\frac12$. Suppose $(A,u) \in \A(\D) \times \S(\D)$ is a smooth YMH field
with bounded YMH energy, where $\norm{F}_{L^2(\D)}$ is small and $A$ is under Coulomb gauge. \\
1)~For any $1<p<2$,
\begin{equation}\label{equ:reg0}
\norm{A}_{W^{2,p}(\D')} \le C_p(\norm{Du}_{L^2(\D)} + \norm{F}_{L^2(\D)}),
\end{equation}
where $C_p$ is a constant depending on $p$.\\
2)~There exists a constant $\epsilon_0> 0$ independent of $A$ and $u$, such that if
$$\norm{Du}_{L^2(\D)} < \epsilon_0, $$
then for any $k\ge 2$,
\begin{equation}\label{equ:reg1}
\norm{u-\bar{u}}_{W^{k,2}(\D')} + \norm{A}_{W^{k,2}(\D')} \le C_k \YMH(A,u),
\end{equation}
where $\bar{u}$ is the mean value of $u$ over $\D$ and $C_k$ is a constant depending only on $k$. In particular, we have
\begin{equation}\label{equ:reg2}
|Du(0)| + |F(0)| \le C(\norm{Du}_{L^2(\D)} + \norm{F}_{L^2(\D)}).
\end{equation}
\end{lem}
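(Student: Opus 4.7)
The plan is to treat the local system (\ref{e:el-loc}) as a coupled elliptic system under the Coulomb condition and bootstrap in direct analogy with the $\ep$-regularity theory for harmonic maps, with the connection entering as a lower-order perturbation.

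For part (1), the Coulomb gauge $d^*A = 0$ converts the second equation of (\ref{e:el-loc}) into the elliptic PDE $\Delta A = -[A, dA] - [A,[A,A]] - u^*(du + Au)$. The bounded YMH energy gives $A \in W^{1,2}(\D)$ and $du \in L^2(\D)$, so the two-dimensional Sobolev embedding $W^{1,2} \hookrightarrow L^q$ for every finite $q$ places each term on the right in $L^p(\D)$ for any $p<2$: the commutator $[A,dA]$ by H\"older using $A\in L^{2p/(2-p)}$ and $dA\in L^2$, the cubic $[A,[A,A]]$ by $A\in L^{3p}$, and $u^*(du+Au)$ by boundedness of $u$ (compact target $M$) combined with $du\in L^2$ and $A\in L^q$. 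Standard interior $L^p$-elliptic estimates on the disk then yield (\ref{equ:reg0}).

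For part (2), I would bootstrap from the small-energy hypothesis $\norm{Du}_{L^2(\D)}<\ep_0$ via a Sacks--Uhlenbeck-type argument. Under the Coulomb gauge the $u$-equation reduces to $\Delta u = -\Ga(u)(du,du) - 2A\cdot du - A^2\cdot u + \nabla\H(u)$, which is the classical harmonic-map equation augmented by terms involving the H\"older-continuous connection produced by part (1) (via $W^{2,p}\hookrightarrow C^\al$ for $p$ close to $2$). The heart of the argument is the critical quadratic nonlinearity $\Ga(u)(du,du)$, which a priori lies only in $L^1$. One handles it on small sub-balls by pairing $\Delta(u-\bar u)$ against a cutoff and using Poincar\'e to absorb a fixed multiple of $\norm{du}_{L^2}^2$ into the left-hand side once $\ep_0$ is chosen small; alternatively a Rivi\`ere-type antisymmetric-potential rewriting handles $\Ga(u)(du,du)$ and $A\cdot du$ uniformly via a Wente-type estimate. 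Either route upgrades $du\in L^2$ to $du\in L^{2+\de}$ locally, hence $u\in C^\al$ by Morrey.

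Once $u, A \in C^\al$ on a slightly smaller disk, the right-hand sides of both equations in (\ref{e:el-loc}) are controlled in $C^\al$ (for the $u$-equation one also invokes the Euclidean embedding of $M$ and smoothness of $\Ga,\H$), and alternately applied Schauder/$L^p$-estimates on the two equations bootstrap the pair $(A,u)$ to $W^{k,2}(\D')$ for every $k\geq 2$, giving (\ref{equ:reg1}); the pointwise bound (\ref{equ:reg2}) then follows from the two-dimensional embedding $W^{2,2}(\D')\hookrightarrow C^0$. The main obstacle is the initial break-through to subcritical integrability of $du$: the term $\Ga(u)(du,du)$ is scale-critical in dimension two, and the smallness $\norm{Du}_{L^2(\D)}<\ep_0$ is precisely what is needed to close the estimate and seed the iteration.
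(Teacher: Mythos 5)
Your proposal is correct and follows essentially the same route the paper indicates: fix the Coulomb gauge via Uhlenbeck's theorem so that (\ref{e:el-loc}) becomes a coupled elliptic system, obtain the $W^{2,p}$ bound on $A$ from the subcritical connection equation, and handle the critical term $\Ga(u)(du,du)$ by the small-energy absorption/Rivi\`ere-type argument borrowed from harmonic map theory before bootstrapping. The paper itself only outlines this and defers the details to \cite{S} (citing \cite{RS,ST} for the harmonic-map regularity input), so your sketch is at the same level of, and consistent with, the intended argument.
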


\begin{rem}
Note that the assumption of $\ep$-smallness of the energy $\norm{Du}_{L^2(\D)}$ is only needed for 2) in Lemma~\ref{l:reg1}. The $W^{2,p}$ local estimates for $A$ holds everywhere if $\norm{F}_{L^2(\D)}$ is small. In particular, in the sub-critical dimension 2, the $L^2$-norm of the curvature can always be small after a scaling of the metric.
\end{rem}

\begin{lem}[Removable singularity]\label{l:sing}
Suppose $A$ is a continuous connection on $P(\D)$ and $u\in W^{2,2}_{\text{loc}}(\mathbb{D}\setminus\{0\})$. If $(A, u)$ is a YMH field on on the punctured disk $\mathbb{D}\setminus\{0\}$ with bounded YMH energy, then $u$ can be extended to a map $\tilde{u} \in W^{2,2}(\mathbb{D})$.
\end{lem}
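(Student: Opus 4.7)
The plan is to adapt the classical Sacks-Uhlenbeck removable singularity argument to the YMH setting, combining the $\epsilon$-regularity of Lemma~\ref{l:reg1} with an exponential energy-decay estimate on the cylinder. First I would pass to cylindrical coordinates via $r=e^{-t}$, so that $\D\setminus\{0\}$ is conformally equivalent to the half-cylinder $C_\infty=[0,\infty)\times S^1$ with flat metric $dt^2+d\th^2$. Since $\E(A,u)=\norm{Du}_{L^2}^2$ is conformally invariant in dimension two whereas $\YM(A)$ and $\H(u)$ pick up factors $\ld=e^{-2t}$, the rescaled YMH equation on $C_\infty$ still has finite $L^2$-energy for $Du$, with exponentially decaying curvature and potential contributions. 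Because $A$ is continuous on $\D$, Uhlenbeck's theorem lets me fix a Coulomb gauge on each unit piece $C_t:=[t-1,t+1]\times S^1$ in which $A$ is uniformly small for $t$ large, and by absolute continuity of the finite total YMH energy, Lemma~\ref{l:reg1} yields a uniform pointwise bound $|Du(t,\th)|^2+|F(t,\th)|^2\le C\,\YMH(A,u)|_{C_t}\to 0$ as $t\to\infty$.

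The heart of the proof is then to upgrade this to exponential decay of the tail energy $E(t):=\int_{[t,\infty)\times S^1}|Du|^2\,dt\,d\th$. Following a Pohozaev / three-circle-type argument, I would test the first equation of~(\ref{e:el-loc}) against $\p_t u$ on a cylinder slice and integrate by parts. The error terms generated by the coupling $A\cdot du$, the curvature $F$, and the Higgs potential $|\mu(u)-c|^2$ are dominated by the smallness of $A$ in the chosen gauge and by the conformal factor $e^{-2t}$, giving a differential inequality of the form $E(t)\le -cE'(t)$ and hence $E(t)\le Ce^{-\al t}$ for some $\al>0$. Combined with the pointwise $\epsilon$-regularity bound this yields $|Du(t,\th)|\le Ce^{-\al t/2}$ on $C_\infty$, so that $u(t,\cdot)$ converges uniformly to a constant $u_\infty\in M$ as $t\to\infty$.

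Translating back to the disk, the decay reads $|Du(x)|\le C|x|^{\al/2-1}$, so $u$ extends continuously to $\D$ with $u(0)=u_\infty$ and lies in $W^{1,2}(\D)$. The local elliptic equation~(\ref{e:el-loc}) then has a right-hand side controllable in $L^2$ near the origin via a standard $W^{2,p}$-to-$W^{2,2}$ bootstrap handling the quadratic term $\Ga(u)(du,du)$ through the Hölder estimate just obtained, and standard interior elliptic regularity promotes $u$ to $\t{u}\in W^{2,2}(\D)$. The principal technical obstacle lies in the exponential decay step: beyond the pure harmonic map case one must simultaneously control the gauge-fixed connection and the Higgs potential, and it is precisely the continuity of $A$ at $0$ together with the exponential conformal factor $e^{-2t}$ that keeps the error terms in the Pohozaev identity subordinate to the main energy term.
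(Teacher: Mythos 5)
Your proposal is correct and follows essentially the same route as the paper: the paper defers the detailed proof to its reference \cite{S}, but the argument there (and the machinery the paper itself develops in Section~\ref{s:section} --- conformal transfer to a half-cylinder, $\ep$-regularity on unit pieces, exponential decay of the angular energy via the differential inequality for $\Th(t)$, the Pohozaev-type balance $e(t)\approx\text{const}$, and a final elliptic bootstrap) is exactly the Sacks--Uhlenbeck scheme you outline, with the continuity of $A$ and the factor $e^{-2t}$ rendering the gauge and Higgs terms exponentially small perturbations. The only point worth making explicit in a full write-up is that before invoking interior elliptic regularity on all of $\D$ you must check that $u$ satisfies the equation distributionally across the puncture (a standard zero-capacity cutoff argument using $du\in L^2$).
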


\subsection{Canonical metrics on degenerating surfaces}\label{s:metric}

Let $g, m$ be nonnegative integers satisfying $2g+m\ge 3$. The Deligne-Munford moduli space $\M_{g,m}$ gives a compactification
of  isomorphism classes of stable curves of genus $g$ and $m$ marked points. Given a nodal curve $(\Si, j, \z)\in \M_{g,m}$, we may
parameterize a neighborhood of $(\Si, j, \z)$ in $\M_{g,m}$ together with a family of "canonical" metrics on each element in this
neighborhood. An important feature of the metric we chose is that they are flat near the nodes. Here we follow~\cite[Section 9]{FO} to
give such a construction, which will enable us to reduce the convergence problem near the nodes to the problem on long
cylinders. For our purpose, we only illustrate such a construction locally near one node $z\in \z$.

First fix a K\"ahler metric $h$ on $(\Si, j, \z)$ such that the metric is flat in a neighborhood of $z\in \z$. Let $\pi: \tilde{\Si}\to \Si$
be
the normalization map such that $\pi(z_1) = \pi(z_2) = z$ where $z_1\in \Si_1$ and $z_2\in \Si_2$ are the preimages on the normalized
components $\Si_1, \Si_2$. There is a metric on $T_{z_1}\Si_1$ and $T_{z_2}\Si_2$ induced by the metric on $\Si_1$ and $\Si_2$,
which induces one on the tensor product $T_{z_1}\Si_1\otimes T_{z_2}\Si_2$. Thus for each nonzero element $\eta \in
T_{z_1}\Si_1\otimes T_{z_2}\Si_2$, we have a biholomorphic map $\Phi_\eta: T_{z_1}\Si_1\setminus\{0\} \to  T_{z_2}\Si_2
\setminus\{0\}$ such that
\[ w\otimes\Phi_\eta(w) = \eta, \quad \forall w\in T_{z_1}\Si_1\setminus\{0\}. \]
Now suppose $|\eta| = \de^4$ where $\de$ is a small positive number. We remove the disk $U(z_1, \de^3)$ from $\Si_1$ and $U(z_2,
\de^3)$ from $\Si_2$, where $U(x, r)$ denotes the metric ball of radius $r$ centered at $x$. Next we choose a smooth function
$\chi_\de:(0, +\infty) \to (0, +\infty)$ which is invariant by $\Phi_\eta$ and $\chi_\de(r) = 1$ if $r\ge \de^{3/2}$. Then we can glue the
two annuluses $U(z_1, \de)\setminus U(z_1, \de^3)$ and $U(z_2, \de)\setminus U(z_2, \de^3)$ together by $\Phi_\eta$ such that
$\Phi_\eta^*g_\de = g_\de$, where $g_\de = \chi_\de(r)(dr^2+r^2d\th^2)$. Thus we obtain a family of stable curves $\Si_\eta$ together
with metrics $g_\de$ decided by $\chi_\de$. We call the glued annulus in $\Si_\eta$ the \emph{collar area} and denote it by $\C_\de$.
By a transformation
$$(r, \th)\to (t,\th) := (-\log r+2\log\de, \th),$$
we find that the collar area $\C_\de$ is isomorphic to a cylinder $[-T_\de, T_\de]\times S^1$, where $T_\de=-\ln\de$ goes to infinity
as $\de\to 0$. Moreover, using the parameters $(t,\th)$, the metric $g_\de$ on the cylinder can be expressed as
\begin{equation}\label{e001}
  g_\de(t,\th) =e^{-2t}\de^4\chi_\de(e^{-t}\de^2)(dt^2+d\th^2).
\end{equation}

We fix the function $\chi_\de$ once we choose it. The above construction gives a parametrization of a neighborhood of $(\Si, j, \z)$ by
a neighborhood of the origin in $T_{z_1}\Si_1\otimes T_{z_2}\Si_2$. Thus, for any Riemann surface close to $(\Si, j, \z)$, we have a
set of collar areas together with a "canonical" metric which corresponds to the nodal set $\z$. In particular, the metric on each collar
area is almost flat.

Now suppose $(\Si_n, h_n, j_n)$ is a sequence of degenerating Riemann surfaces which converges to a nodal surface $(\Si, h, j, \z)$
where the metric $h$ is flat near the nodes and $h_n$ is chosen as above. Then the metric $h_n$ converges to $h$ on each regular
subsets while they shrink to the nodes at the collar areas. More precisely, for each node $z\in \z$ and neighborhood $U\subset \Si$,
there exists a sequence of $\de_n>0$ and collar areas $\C_n := \C_{\de_n}$, such that $(U\setminus \C_n, h_n)$ converges to
$(U\setminus z, h)$. Moreover, the collar area is isomorphic to a long cylinder $\C_n = [-T_n, T_n]\times S^1$ where $T_n\to \infty$.
We denote the restriction of $h_n$ on $\C_n$ by $g_n = \ld_n^2(dt^2+d\th^2)$. From (\ref{e001}), it is easy to see that $\ld_n$
decays exponentially when $t$ goes away from the ends to the middle of the cylinder, i.e.
\begin{equation}\label{e:exp-decay1}
  |\ld_n(t)| \le C\de_n\exp(|t| - T_n), \quad \forall t\in [-T_n, T_n],
\end{equation}
where $\de_n = e^{-T_n}\to 0$ as $n\to \infty$.

For convenience, we introduce the following definition and call $\ld_n$ is
\emph{exponentially bounded} on $\C_n$.
\begin{defn}
\begin{enumerate}
  \item A function $f(t)$ is called exponentially bounded on the interval $[-T, T]$ if there exists a constant $C>0$ such that
\[ |f(t)| \le C \exp(|t|-T), \quad \forall t\in [-T, T]. \]
  \item For any $p>1$, a map $u$ on the cylinder $\C_T := [-T, T]\times S^1$ is called $L^p$-exponentially bounded, if the function $f(t) :=
\norm{u}_{L^p(\C_t)}$ is exponentially bounded, where $\C_t = [-t, t]\times S^1 \subset \C_T$.
\end{enumerate}
\end{defn}

\section{Estimate of connection on cylinder}\label{s:connection}

\subsection{Estimate of curvature}

In this section, we derive the estimates of a connection on a cylinder. Let $G$ be a compact connected Lie group and $P$ be a principal $G$-bundle over a flat cylinder $\C_T := [-T,
T]\times S^1$. Note that $P$ can be trivialized since $G$ is connected. Suppose $A$ is a connection on $P$ which satisfies the equation
\begin{equation}\label{e:connection}
D^*F = \ld B
\end{equation}
where $B\in\Om^1(AdP)$ is $L^\infty$-bounded and $\ld(t)$ is exponentially bounded on $\C_T$ by
\begin{equation}
  |\ld(t)| \le C\de\exp(|t|-T).
\end{equation}
Suppose that the curvature $F$ satisfies, for any $t\in (0, T]$ and sub-cylinder $\C_t = [-t, t]\times S^1 \subset \C_T$,
\begin{equation}\label{e30}
\norm{F}_{L^2(\C_t)} \le C\sup_{s\in [-t, t]}\ld(s) \le C\de\exp(|t|-T).
\end{equation}
That is, $F$ is $L^2$-exponentially bounded. Moreover, we assume that the $L^2$-norm of $F$ on the cylinder is small such that Uhlenbeck's theorem~\cite{Uh} can be applied for all small balls.

\begin{lem}\label{l3}
Under the above assumptions, the curvature $F$ satisfies
\begin{equation*}
\norm{F}_{L^\infty(\C_{t-1})} \le C\de\exp(|t|-T).
\end{equation*}
\end{lem}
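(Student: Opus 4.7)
The plan is to transfer the $L^2$-exponential decay of $F$ and the pointwise decay of $\ld$ into a pointwise sup bound on $F$ via interior elliptic regularity in a Coulomb gauge. Fix an arbitrary $x_0=(t_0,\th_0)\in \C_{t-1}$. Since $|t_0|+1\le t$, the unit disk $U = U_1(x_0)$ is contained in $\C_{|t_0|+1}\subset \C_t$, and in particular sits inside the flat cylinder away from the ends without wrapping around the $S^1$-factor. By assumption $\norm{F}_{L^2(U)}$ is small, so Uhlenbeck's theorem~\cite{Uh} produces a gauge transformation on $U$ after which $d^*A=0$ and $\norm{A}_{W^{1,2}(U)}\le C\norm{F}_{L^2(U)}$.

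In this Coulomb gauge, the connection equation $D^*F=\ld B$ becomes the semilinear elliptic system
\begin{equation*}
\Delta A + [A, dA] + [A,[A,A]] = -\ld B,
\end{equation*}
which is just the connection half of~(\ref{e:el-loc}). The interior $W^{2,p}$ estimate of Lemma~\ref{l:reg1}(1), applied on the half-disk $U'=U_{1/2}(x_0)$, yields for any $1<p<2$
\begin{equation*}
\norm{A}_{W^{2,p}(U')} \le C\bigl(\norm{F}_{L^2(U)} + \norm{\ld}_{L^\infty(U)}\norm{B}_{L^\infty(U)}\bigr),
\end{equation*}
the nonlinear terms being absorbed via H\"older and Sobolev using the smallness of $\norm{A}_{W^{1,2}(U)}$. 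One further elliptic bootstrap upgrades this to $W^{2,q}$ with $q>2$, which by Sobolev embedding in dimension $2$ bounds $F = dA + \tfrac12[A,A]$ pointwise at $x_0$:
\begin{equation*}
|F(x_0)| \le C\bigl(\norm{F}_{L^2(U)} + \sup_{U}|\ld|\bigr).
\end{equation*}

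To conclude, insert the hypotheses~(\ref{e30}) and the exponential bound on $\ld$. Since $U \subset \C_{|t_0|+1}$,
\begin{equation*}
\norm{F}_{L^2(U)} \le C\de\exp(|t_0|+1-T), \qquad \sup_{U}|\ld| \le C\de\exp(|t_0|+1-T),
\end{equation*}
so $|F(x_0)| \le C'\de\exp(|t_0|-T) \le C'\de\exp(|t|-T)$ after absorbing the factor $e$ into $C'$. Taking the supremum over $x_0\in \C_{t-1}$ gives the stated $L^\infty$ bound. I expect no genuine obstacle: the only mild subtlety is keeping the nonlinear terms $[A,dA]$ and $[A,[A,A]]$ harmless during the bootstrap, but this is automatic because the global smallness of $\norm{F}_{L^2(\C_T)}$ forces $\norm{A}_{W^{1,2}}$ to be uniformly small on every unit disk in Coulomb gauge, so all constants in the elliptic estimates are controlled uniformly in $x_0$.
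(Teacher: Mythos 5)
Your proposal is correct and follows essentially the same route as the paper: Uhlenbeck's Coulomb gauge on a unit disk around each point, the resulting semilinear elliptic system for $A$, a two-step elliptic bootstrap from $W^{1,2}$ through $W^{2,p}$ ($1<p<2$) up to $W^{2,q}$ ($q>2$), and then insertion of the $L^2$-exponential bound on $F$ and the pointwise bound on $\ld$. The only cosmetic difference is that you track the local $L^2$ norm on the unit disk rather than on all of $\C_t$, which changes nothing.
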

\begin{proof}
For any point $x \in \C_{t-1}$ and the unit disk $\D_x \subset \C_t$ centered at $x$, we may choose the Coulomb gauge on
$\D_x$ such that $D = d+A$ where $d^*A = 0$ and
\begin{equation}\label{e31}
\norm{A}_{W^{1,2}(\D_x)} \le C\norm{F}_{L^2(\D_x)}.
\end{equation}
Then $F=dA + \frac12[A, A]$ and the equation (\ref{e:connection}) becomes
\begin{equation*}
\Delta A + [A, dA] + \frac12[A, [A, A]] = \ld B.
\end{equation*}
From (\ref{e31}), it is obvious that $A$ is $L^p$-bounded for any $p<\infty$. Denote the disk centered at $x$ with radius $\frac 12$
by $\D_x'$. By a standard elliptic estimate and H\"older's inequality, we can easily deduce that for any $1<p<2$,
\[ \norm{A}_{W^{2,p}(\D_x')} \le C(\norm{A}_{W^{1,2}(\D_x)} + \norm{\ld B}_{L^p(\D_x)}). \]
It follows that $A$ belongs to $W^{1, q}$ for any $1<q<\infty$ and hence $L^\infty$ by Sobolev imbedding. Applying the elliptic
estimate again, we deduce that
\[ \norm{A}_{W^{2,q}(\D_x')} \le C(\norm{A}_{W^{1,q}(\D_x)} + \norm{\ld B}_{L^q(\D_x)}). \]
Therefore, $\nabla A$ belongs to $L^\infty$. So does $dA$ and $F$. Actually, in view of (\ref{e30}) and (\ref{e31}), we may conclude
that
\begin{equation*}
\begin{aligned}
\norm{F}_{L^\infty(\D'_x)} &\le C(\norm{A}_{W^{1,2}(\D_x)} + \norm{\ld B}_{L^\infty(\D_x)})\\
&\le C(\norm{F}_{L^2(\C_{t})} + \sup_{[-t, t]}\ld\norm{B}_{L^\infty(\D_x)})\\
&\le C\de\exp(|t|-T).
\end{aligned}
\end{equation*}
\end{proof}

\subsection{Balanced temporal gauge}

Generally, on a cylinder which is homotopic to $S^1$, there dose not exist a global Coulomb gauge even if the fiber bundle is trivial.
Thus we can not obtain global compactness of the connection from Uhlenbeck's local $L^p$-estimate. However, there do exists there is a convenient gauge on the cylinder which is referred as the \emph{balanced temporal gauge}~\cite{MT}. The existence of the balanced temporal gauge is based on the classifications of connections on bundles over $S^1$. Namely, the connections over $S^1$ are in 1-1 correspondence with the holonomy groups. Using this gauge, we can still obtain similar estimates on the whole cylinder as in Uhlenbeck's local compactness theorems, after modulo out the holonomy. To this end, we first show

\begin{lem}\label{l:gauge}
Suppose $A$ is a connection on the principle $G$-bundle $P$ over a cylinder $\C_T$, then there exists a balanced temporal gauge
such that in this gauge, we have
\[ D = d+A, \qquad A = a d\theta, \]
where $a$ is a smooth map from $\C_T$ to $\g$. Moreover, there exists a constant  $\al \in \g$ such that $$a(0, \theta) = \al, \forall \th
\in S^1.$$
\end{lem}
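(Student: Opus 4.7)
The proof will proceed in two gauge-fixing steps, each accomplished by solving a first order ODE along an appropriate direction on the cylinder.

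First I would pass to the temporal gauge. Write $A = a_0\,dt + a_1\,d\theta$ in an arbitrary initial trivialization, where $a_0, a_1 \in C^\infty(\C_T, \g)$. Seek a gauge transformation $s\in C^\infty(\C_T, G)$ satisfying the ODE
\begin{equation*}
  \partial_t s = -a_0\, s, \qquad s(0,\theta) = e,
\end{equation*}
solved slice by slice along each meridian $\{t\}\times\{\theta\}$. Smooth dependence on $\theta$ follows from smooth dependence on parameters, and since $G$ is compact the solution is defined for all $t\in[-T,T]$. In this new gauge, the $dt$ component of $s^*A = s^{-1}ds + s^{-1}As$ vanishes, so that $A = a\,d\theta$ for some $a\in C^\infty(\C_T,\g)$.

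Second I would normalize the restriction of $A$ to $\{0\}\times S^1$. On this circle the connection is of the form $a(0,\theta)\,d\theta$ on the trivial bundle; let $h:[0,2\pi]\to G$ be its parallel transport, defined by $\partial_\theta h = -a(0,\cdot)\,h$ with $h(0)=e$, and let $H := h(2\pi)$ be the holonomy. Because $G$ is compact and connected, the exponential map is surjective, so we may choose $\alpha \in \g$ with $\exp(2\pi\alpha) = H$. Define
\begin{equation*}
   g(\theta) := h(\theta)\exp(-\theta\alpha),
\end{equation*}
which satisfies $g(0) = g(2\pi) = e$ and is therefore a smooth gauge transformation on $S^1$. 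A direct calculation using $[\alpha, \exp(-\theta\alpha)]=0$ shows that $g^{-1}\partial_\theta g + g^{-1} a(0,\cdot)\,g$ is the constant element $\alpha$. Now extend $g$ to the whole cylinder by $g(t,\theta) := g(\theta)$, i.e.\ constant in $t$. Since $\partial_t g \equiv 0$, the new gauge transformation preserves the temporal gauge ($dt$ coefficient still vanishes), while on the central circle the connection becomes $\alpha\,d\theta$ as required.

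The composition of the two gauge transformations gives the balanced temporal gauge. The main subtlety, and the only place where the hypothesis that $G$ is compact and connected enters in an essential way, is the existence of the logarithm $\alpha$ of the holonomy; everything else is either an ODE existence result or routine verification of the transformation law. Note also that $\alpha$ is determined only up to the ambiguity of the exponential map, but this is harmless for the purposes of the subsequent analysis.
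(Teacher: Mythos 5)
Your proof is correct and follows essentially the same two-step route as the paper: first integrate the ODE $\partial_t s = -A_t s$ to kill the $dt$-component, then normalize the connection on the central circle to a constant $\alpha\,d\theta$ and extend that gauge transformation $t$-independently. The only difference is that where the paper simply invokes the classification of connections on $S^1$, you supply the explicit construction via the parallel transport $h$, the surjectivity of $\exp$ for compact connected $G$, and the correction factor $\exp(-\theta\alpha)$ — which is exactly the content of that classification (up to an immaterial sign convention in whether the resulting constant is $\alpha$ or $-\alpha$).
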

\begin{proof}
Assume that in a given trivialization, the connection has the form $D = d + A$ where $A = A_t dt + A_\th d\th$. We choose the desired
gauge in two steps.

First, for any fixed $\th \in S^1$, we can solve the following o.d.e. to get a family of gauge transformations $s_\th(t), \th \in S^1$ on the interval $[-T, T]$
\begin{equation*}
\left\{ \begin{aligned}
&\dt{s_\th} + A_t s_\th = 0,\\
&s_\th(0) = id.
\end{aligned} \right.
\end{equation*}
Then we define a gauge transformation $s_1(t,\th) = s_\th(t)$ on the cylinder $\C_T$ such that in this gauge where $\t{A} =
s_1^*A$. Obviously, we have
\[ \t{A}_t = g_\th^{-1}\dt{g_\th} + g_\th^{-1}A_tg_\th = 0. \]
It follows $\t{A} = \t{A}_\th d\th$ for some $\t{A}_\th \in C(\C, \g)$.

Next, the restriction $\t{A}|_{\{0\}\times S^1} = \t{A}_\th(0, \cdot)d\th$ gives a connection on the middle circle $\{0\}\times S^1$. By
the classification of connections on $S^1$, there exists a gauge transformation $s_2 \in C^1(S^1, G)$ such that $s_2^*
\t{A}|_{\{0\}\times S^1} = \al d\th$ where $\al \in \g$ is constant. Now applying $g_2$ to the whole cylinder, we have $s_2^*\t{A} = ad\th$ for some $a\in C^1(\C_T, \g)$ with desired properties.
\end{proof}

\begin{rem}\label{r1}
We may choose the balanced temporal gauge such that the constant $\al$ belongs to a compact subset of $\g$. In fact, since the Lie group $G$ is compact, there exists a compact set $\k\subset \g$ such that $\exp(2\pi \k) = G$. Due to the classification of connections on $S^1$, the connection on the middle circle $\{0\}\times S^1$ is determined by the holonomy $\Hol(A, 0)$ of $A$ around the circle. Thus we can find $\al\in \k$ such that $\exp(2\pi \al) = \Hol(A, 0)$, and hence a gauge such that $A|_{\{0\}\times S^1} = \al d\th$.
\end{rem}

\subsection{Estimate of connection}

Using the balanced temporal gauge, we give a global estimate of the connection.

\begin{lem}\label{l5}
Suppose $A$ satisfies equation~(\ref{e:connection}), $B$ belongs to $L^\infty$ and $F$ is $L^2$-exponentially bounded by (\ref{e30}). Then there exists a balanced temporal gauge and a constant $\al\in \g$, such that $A = ad\th$ and
\[ \norm{a-\al}_{W^{1,\infty}(\C_{t-1})} \le C\de\exp(|t| - T). \]
\end{lem}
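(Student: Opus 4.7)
The plan is to work entirely in the balanced temporal gauge supplied by Lemma~\ref{l:gauge} and Remark~\ref{r1}, so that $A = a\,d\th$ with $a(0,\th) = \al \in \k$ a constant element of the compact set $\k \subset \g$. In this gauge, the $W^{1,\infty}$-bound on $a - \al$ reduces to pointwise bounds on the three quantities $a - \al$, $\p_t a$, and $\p_\th a$, which I will establish separately by integrating in the $t$-direction starting from the middle circle $\{0\}\times S^1$ on which $a$ is already pinned.

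First, a direct computation shows that $F = \p_t a\, dt\wedge d\th$, since in this gauge $[A,A] = [a,a]\,d\th\wedge d\th = 0$. Hence $|F| = |\p_t a|$, and applying Lemma~\ref{l3} to the sub-cylinder $\C_{|s|+1}$ (rather than the ambient $\C_T$) around each point yields the pointwise bound $|\p_t a(s,\th)|\le C\de\exp(|s|-T)$ for all $(s,\th)\in\C_{T-1}$. Integrating from $t=0$, where $a(0,\th) = \al$, then gives $|a(t',\th) - \al| \le C\de\exp(|t'|-T)$ as well, so two of the three required bounds are already in hand.

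The bound on $\p_\th a$ is the main obstacle, since the curvature in the temporal gauge only sees the $t$-derivative of $a$. For this I invoke the equation $D^*F = \ld B$: its $dt$-component in the balanced temporal gauge reads
\[ \p_{t\th} a + [a, \p_t a] = \ld B_t,\]
where $|a| \le |\al| + |a - \al| \le C$ by the previous step. Combined with the exponential bounds on $\p_t a$ and $\ld$, this produces $|\p_{t\th} a(s,\th)|\le C\de\exp(|s|-T)$. The crucial point is now that the \emph{balanced} choice of gauge ensures $a(0,\th)\equiv \al$ is independent of $\th$, so $\p_\th a(0,\th) = 0$, and a final integration in $t$ delivers the desired $L^\infty$ bound on $\p_\th a$. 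This is exactly why the balanced condition---not merely the temporal condition $A_t=0$---is needed: without it, the initial value of $\p_\th a$ at $t=0$ could be arbitrary and the final integration would not close.
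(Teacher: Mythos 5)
Your proposal is correct and follows essentially the same route as the paper: put $A$ in the balanced temporal gauge, use Lemma~\ref{l3} to get the pointwise exponential bound on $a_t = |F|$, integrate from the middle circle to bound $a-\al$, then read off the $dt$-component $a_{t\th}+[a,a_t]=\ld B_t$ of equation~(\ref{e:connection}) and integrate once more (using $\p_\th a(0,\cdot)=0$) to control $a_\th$. Your explicit remark on why the \emph{balanced} condition is what closes the final integration is a point the paper leaves implicit, but the argument is the same.
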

\begin{proof}
By Lemma~\ref{l2}, we choose a balanced temporal gauge such that $A = a d\th$ and $a(0, \th) = \al$ is a constant. Then
we have $F = a_t dt\wedge d\th$, $D = d + ad\th$ and
$$D^* = d^* + A^* = -*d* - ad\th,$$
where $d\th$ acts by contraction. A simple calculation shows that in this gauge, equation (\ref{e:connection}) is equivalent to
\begin{equation}\label{e7-1}
  -a_{tt} d\th + (a_{t\th} + [a, a_t])dt =  \ld B.
\end{equation}
By Lemma~\ref{l3}, $F$ is $L^\infty$-exponentially bounded in $\C_{T-1}$. It follows that for all $t\in [-T+1, T-1]$, we have
\begin{equation}\label{e7-2}
 |a_t(t, \th)| \le C\sup \ld \le C\de\exp(|t| -T)
\end{equation}
and hence
\[ |a(t, \th) -\al| \le \int_0^t |a_t| dt \le C\de\exp(|t| -T). \]
By Remark~\ref{r1}, we may assume that $\al$ is bounded. So $a$ is bounded on the whole cylinder $\C_T$. On the other hand, from
equation~(\ref{e7-1}), it is obvious that
\[ |a_{t\th}| \le |[a, a_t]| + |\ld B| \le C\de\exp(|t| -T). \]
Integrating on $[0, t]$, we find that $a_\th$ is also exponentially bounded. Combining this with (\ref{e7-2}), we conclude that $a$ approaches $\al$ exponentially in $W^{1,\infty}$, as $t$ goes to $0$. This completes the proof of the lemma.
\end{proof}

\section{Estimate of section on cylinder}\label{s:section}

\subsection{Reduced equation}

Let $P$ be a trivial $G$-principle bundle over the cylinder $\C_T$ and $\F = P\times_G M$ be the associated bundle. In this section, we suppose $A$ is
a \emph{flat} connection and $u: \C_T \to M$ is a map (identified with a section $\phi\in \S(\F)$) which has finite energy
$$\E(A, u) = \norm{Du}_{L^2(\C_T)}^2 \le C $$
and satisfies the equation
\begin{equation}\label{e:section}
  D^*Du = -f
\end{equation}
where $D$ is the covariant derivative induced by $A$ and $f \in \S(u^*TM)$ is $L^\infty$-bounded.

Since the connection $A$ is flat, locally there always exists a Coulomb gauge such that $A$
vanishes. Thus we have the following $\ep$-regularity theorem, which is analogous to the one for harmonic maps.
\begin{lem}\label{l:reg2}
Suppose $A$ is a flat connection and $u\in W^{2,2}(\C_1)$ is a solution to equation~(\ref{e:section}) on the unit disk $\C_1 :=
[-1,1]\times S^1$. There exist $\ep_0>0$ such that if
\[ \norm{D u}_{L^2(\C_1)} < \ep_0, \]
then
\begin{equation}\label{e:reg}
\norm{D u}_{C^0(\C_{\frac12})} \le C(\norm{D u}_{L^2(\C_1)} + \norm{f}_{L^p(\C_1)}),
\end{equation}
where $\C_{\frac12} := [-\frac12, \frac12]\times S^1$ is a sub-cylinder and $p>2$.
\end{lem}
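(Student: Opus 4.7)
The plan is to reduce the statement to the classical $\epsilon$-regularity theorem for (approximately) harmonic maps by trivializing the flat connection on small disks, and then pass from a pointwise interior estimate to the uniform bound on $\C_{1/2}$.

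First I would fix an arbitrary point $x \in \C_{1/2}$ and work on a small geodesic disk $\D_x \subset \C_1$ of radius, say, $1/4$. Since $\D_x$ is simply connected and $A$ is flat, there exists a gauge transformation $s \in \G(\D_x)$ such that $s^*A = 0$ on $\D_x$. In this gauge the covariant derivative becomes $D = d$, and viewing $u$ as a map into the ambient Euclidean space $\Real^K$ in which $M$ is isometrically embedded, equation~(\ref{e:section}) reduces to
\begin{equation*}
\Delta u + \Ga(u)(du, du) = f,
\end{equation*}
which is the equation for an approximately harmonic map with inhomogeneity $f$.

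The second step is to invoke the standard $\epsilon$-regularity theorem for (inhomogeneous) harmonic maps, as in the references~\cite{RS,ST} and in the same spirit as Lemma~\ref{l:reg1}. If $\ep_0>0$ is chosen small enough that $\norm{du}_{L^2(\D_x)}^2 \le \norm{Du}_{L^2(\C_1)}^2 < \ep_0^2$ forces the hypothesis of that theorem (the gauge is an isometry on sections, so the $L^2$ norm of $Du$ is gauge invariant), then on the half-disk $\D_x'$ one obtains, for $p>2$,
\begin{equation*}
\norm{du}_{C^0(\D_x')} \le C\bigl(\norm{du}_{L^2(\D_x)} + \norm{f}_{L^p(\D_x)}\bigr).
\end{equation*}
Transforming back by $s$ (which is smooth and bounded, and leaves pointwise norms of $Du$ and $f$ invariant), we recover the same estimate with $Du$ in place of $du$.

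The third and final step is to take a supremum over $x \in \C_{1/2}$. Each $x$ is the center of such a disk $\D_x \subset \C_1$, and the local estimate gives the pointwise bound on $|Du(x)|$ in terms of $\norm{Du}_{L^2(\C_1)}$ and $\norm{f}_{L^p(\C_1)}$ with a uniform constant. This produces the desired $C^0$ bound on $\C_{1/2}$.

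I expect the main subtlety, rather than a real obstacle, to be the non-simply-connectedness of $\C_1$: one cannot gauge $A$ to zero globally on $\C_1$ because the holonomy around the $S^1$ factor may be nontrivial, which is precisely why the argument must be carried out disk by disk and then patched. The other delicate point is verifying that the classical harmonic-map $\epsilon$-regularity applies with the inhomogeneity $f \in L^p$, $p>2$; this is standard (the $L^p$ term enters linearly through elliptic regularity for $\Delta$) but requires $p>2$ to ensure Sobolev embedding into $C^0$ after the bootstrap, which is why the hypothesis on $f$ is stated with $p>2$.
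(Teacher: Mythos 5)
Your proposal is correct and follows essentially the same route as the paper: trivialize the flat connection on a small disk around each point of $\C_{1/2}$ (the paper uses the Coulomb gauge, in which a flat connection vanishes, which amounts to the same thing), reduce to the inhomogeneous harmonic map equation $\Delta u + \Ga(u)(du,du)=f$, apply the harmonic-map $\ep$-regularity plus the $L^p\to W^{2,p}\hookrightarrow C^1$ bootstrap, and conclude by gauge invariance of the pointwise estimate. The only cosmetic difference is that the paper writes out the intermediate steps $W^{2,2}\Rightarrow \nabla u\in L^{2p}\Rightarrow \Delta u\in L^p\Rightarrow W^{2,p}$ explicitly rather than citing them as standard.
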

\begin{proof}
For each point $x\in \C_{\frac12}$, we may choose a disk $\D(x,\frac12)$ centered at $x$ with radius $\frac12$ and a trivialization
which puts the connection $A$ in Coulomb gauge. Since $A$ is flat, it vanishes in this gauge and the derivative is simply $D=\n$, which
is the Levi-Civita connection induced by the metric. Thus if we embed $M$ into an Euclidean space $\Real^K$ and denote the second
fundamental form by $\Ga$, equation (\ref{e:section}) becomes
\[ \tau(u) = \Delta u + \Ga(u)(\n u, \n u) = f, \]
where $\tau(u)$ is the so-called tension field. Then by $\ep$-regularity for harmonic maps (see~\cite{DT} for example), we can show that
\begin{equation*}
\norm{u - \bar u}_{W^{2,2}(\D')} \le C(\norm{\n u}_{L^2(\D)} + \norm{f}_{L^2(\D)}).
\end{equation*}
It follows by Sobolev embedding that $\n u\in L^{2p}$ for any $p>2$. This in turn implies $\Delta u \in L^p$. Thus by the standard
$L^p$ estimate, we have
\begin{equation*}
\norm{u - \bar u}_{W^{2,p}(\D')} \le C(\norm{\n u}_{L^2(\D)} + \norm{f}_{L^p(\D)}).
\end{equation*}
Then the embedding $W^{2,p}\hookrightarrow C^1$ gives
\begin{equation*}
|\nabla u(x)| \le C(\norm{\n u}_{L^2(\C_1)} + \norm{f}_{L^p(\C_1)}),
\end{equation*}
or equivalently
\begin{equation}\label{e11}
|D u(x)| \le C(\norm{D u}_{L^2(\C_1)} + \norm{f}_{L^p(\C_1)}).
\end{equation}
Note that the above estimate is gauge equivalent and hence dose not depend on the choice of Coulomb gauge. Therefore, (\ref{e11})
holds for any $x\in \C_{\frac12}$ and any choice of gauge.
\end{proof}

The Coulomb gauge only exists locally. To establish global estimates, we will always choose the temporal gauge in the following context. Since the
connection is flat, by Lemma~\ref{l:gauge} we may choose a balanced temporal gauge such that $A =\al d\th$ where $\al\in \g$ is a
constant. It follows that in this gauge $d^*A = 0$. A simple calculation yields
\[ D^*Du = (d^*+A^*)(d+A)u = d^*du -2Adu - A^2u. \]
Then equation (\ref{e:section}) is equivalent to
\begin{equation}\label{e1}
\tau(u) + 2\al\cdot u_\th + \al^2\cdot u = f.
\end{equation}

To understand the above equation better, we first give the explicit expression of the action of $\al\in \g$ on $M$. Denote the 1-parameter
group of isomorphisms generated by $\al\in \g$ by $\Phi_s(y) = \exp_y(s\al):M\to M$ for $s\in \Real$, then the infinitesimal action
of $\al$ on $M$ corresponds to a vector field $X \in \Ga(TM)$ given by
\[ \al\cdot y = \frac{d}{ds}\Big|_{s=0}\Phi_s(y) = X(y), ~\forall y\in M. \]
Similarly, $\al$ acts on a vector field $V\in \Ga(TM)$ by
\[ \al\cdot V = \frac{\n}{ds}\Big|_{s=0}d\Phi_s(V) = \nabla X \cdot V,  \]
where $\n$ denotes the Levi-Civita connection induced by the metric on $M$. Note that since $A$ is compatible with the metric, $\n X$ is skew-symmetric, i.e.
\[ \n X(V, W) = -\n X(W, V), ~V, W\in \Ga(TM). \]
Using the vector field $X$, we can write equation (\ref{e1}) as
\begin{equation}\label{e2}
  \tau(u) + 2 \nabla X(u)\cdot u_\theta + \nabla X(u)\cdot X(u) = f.
\end{equation}
One can write equation~(\ref{e2}) in a more compact form. Actually, we can define the partial covariant differential operator in the
$\th$-direction induced by $\al$ by
\[\na := \n_\th + \al, \]
then the connection $D$ splits into
\[ D = \n + \al d\th = \n_t dt + \na d\th. \]
In particular, for a map $u\in C^\infty(\C_T, M)$ we have
\[ \na u =  \p_\th u + X(u), \]
and for a vector field $V \in\Ga(u^*TM)$
\[\na V = \n_\th V + \n X\cdot V. \]
With these notations, we can rewrite equation (\ref{e1}) as
\begin{equation}\label{e7-3}
\n_t^2 u + \na^2 u = f.
\end{equation}

Next we embed the compact manifold $M$ into an Euclidean space. By an equivariant version of Nash's embedding theorem proved by
Moore and Schlafly~\cite{MS}, we can take the embedding to be equivariant under the group action.

\begin{thm}\label{t:MS}\cite{MS}
Suppose the compact Lie group $G$ acts on the compact symplectic manifold $M$ (which is equipped with an equivariant metric) by isometries,
then there exist an
orthogonal representation $\iota:G\to O(K)$ and an embedding from $M$ to $\Real^K$ which is equivariant with respect to $\iota$.
\end{thm}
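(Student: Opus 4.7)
The plan is to construct the embedding in two stages: first produce a smooth equivariant embedding of $M$ into an orthogonal $G$-representation, then adjoin extra equivariant coordinates to correct the induced metric and achieve isometry.

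For the first stage I would invoke the Mostow-Palais equivariant embedding theorem, whose proof rests on the Peter-Weyl decomposition of $C^\infty(M)$ together with the compactness of $G$: there exists a smooth equivariant embedding $f_0 : M \to V_0$ into a finite-dimensional orthogonal representation $V_0$ of $G$. Concretely, one takes sufficiently many $G$-finite smooth functions on $M$ — matrix coefficients of finite-dimensional unitary representations of $G$ — grouped by isotypic component, enough to separate points and tangent vectors of $M$, and then checks that the resulting equivariant map can be arranged to be an embedding by a transversality argument compatible with the $G$-action.

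Let $g$ denote the given $G$-invariant Riemannian metric on $M$, and let $g_0 := f_0^* \langle \cdot, \cdot \rangle_{V_0}$ be the pullback metric, which is again $G$-invariant because both $f_0$ and the inner product on $V_0$ are equivariant. After rescaling $f_0$ by a small positive constant, the difference $g - g_0$ becomes positive definite and $G$-invariant. The remaining task is to construct an equivariant map $f_1 : M \to V_1$ into another orthogonal $G$-representation with $f_1^* \langle \cdot, \cdot \rangle_{V_1} = g - g_0$; then $f := (f_0, f_1) : M \to V_0 \oplus V_1 \cong \Real^K$ is the desired equivariant isometric embedding, and $\iota$ is the direct sum representation of $G$ on $V_0 \oplus V_1$.

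The construction of $f_1$ is the crux, and I would adapt either Nash's twist-and-iterate scheme or Gunther's elliptic fixed-point argument to the equivariant setting. Compactness of $G$ supplies $G$-invariant partitions of unity and lets one average function spaces, Sobolev norms, and linearized operators over $G$; the high-frequency oscillatory factors driving Nash's twist can be built from matrix coefficients of sufficiently many irreducible representations, so that the resulting perturbation naturally lands in an orthogonal $G$-representation. The main obstacle will be verifying that the Nash-Moser (or Gunther) iteration closes inside the Banach space of equivariant maps with uniform tame estimates. This reduces to checking that the linearized equivariant problem is solvable with the usual estimates, which follows because averaging over $G$ commutes with the relevant elliptic operators and preserves the required norms. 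Once this equivariant linearized solvability is established, convergence of the iteration proceeds exactly as in the classical non-equivariant Nash argument and produces the isometric equivariant embedding.
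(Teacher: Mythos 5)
This statement is quoted from Moore and Schlafly \cite{MS} and the paper offers no proof of it, so there is nothing internal to compare your argument against; I can only assess your outline against the cited source. Your first stage (Mostow--Palais via Peter--Weyl, giving a smooth equivariant embedding $f_0$ into a finite-dimensional orthogonal representation) and your reduction (rescale $f_0$ so that $g-g_0$ is a positive-definite invariant metric to be realized by a further equivariant map $f_1$) are exactly the right skeleton and match the structure of the cited proof. Note also that the statement as printed in the paper literally asserts only an equivariant embedding, not an isometric one; you have correctly read it as the isometric version, which is what the paper actually uses.

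The weakness is that your second stage is a plan rather than an argument. The assertion that the Nash--Moser (or G\"unther) iteration ``closes inside the Banach space of equivariant maps'' because ``averaging over $G$ commutes with the relevant elliptic operators'' is precisely the content that has to be established, and it is not automatic: the linearized isometric embedding system is not elliptic, the smoothing operators in Nash's scheme must be chosen $G$-equivariantly, and the free-map condition must be preserved under averaging. Moore and Schlafly sidestep rebuilding Nash's iteration equivariantly: they apply the classical Nash theorem to get a (non-equivariant) isometric embedding $u:(M,g)\to\Real^N$, average it into the infinite-dimensional representation $L^2(G,\Real^N)$ via $x\mapsto(\sigma\mapsto u(\sigma^{-1}x))$ (which is equivariant and induces $g$ by invariance of $g$ and the Haar measure), truncate to a finite-dimensional isotypic piece by Peter--Weyl, and then absorb the resulting small perturbation of the metric by an openness/implicit-function argument. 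That route buys a clean separation between the hard analysis (outsourced to the classical theorem) and the representation theory; your route, if carried out, would require reproving the tame estimates in the equivariant category. As it stands, the crux of your proposal is named but not closed.
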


Therefore, using this representation, the Lie algebra $\g$ corresponds to skew-symmetric $K\times K$ matrices. If we denote the
skew-symmetric matrix corresponding to $\al \in \g$ by $\X := \iota(\al)$, then the infinitesimal action of $\al$ on a point $y\in M
\hookrightarrow\Real^K$ is simply
\[ \al\cdot y = X(y) = \X\cdot y \]
where the $\cdot$ on the right denotes the multiplication by the matric $\X$ on the vector $y\in \Real^K$. On the other hand, since $dX
= \X$, it follows that the action of $\al$ on a vector field $V\in \Ga(TM)$ is given by
\[ \al\cdot V = \n X\cdot V = (\X\cdot V)^{\top}, \]
where $\top$ denotes the projection from $\Real^K$ to the tangent space of $M$. More precisely, if we denote the second fundamental
form of $M$ by $\Ga$, then
\[ \nabla X = d X + \Ga\cdot X = \X + \Ga\cdot X. \]
Consequently, we can write equation (\ref{e1}) as
\begin{equation*}
  \tau(u) + 2(\X\cdot u_\theta)^{\top} + (\X\cdot X(u))^{\top} = f,
\end{equation*}
or
\begin{equation}\label{e002}
  \tau(u) + 2\X\cdot u_\theta + \X\cdot X(u) + 2\Ga(u)(u_\th, X(u)) + \Ga(u)(X(u), X(u)) = f.
\end{equation}
We also have a compact form of equation~(\ref{e002}). In fact, we may define a partial differential operator
\begin{equation}\label{e:partial-derivative}
  \pa := \p_\th + \X.
\end{equation}
Then for a map $u$, $\pa  u = \na u$ is identical while for a vector field $V$, we have
$$\na V = (\pa V)^\top = \pa V + \Ga(X, V).$$
Thus we can rewrite equation (\ref{e:section}), or equivalently (\ref{e7-3}) as
\begin{equation}\label{e3}
  \p_t^2u + \pa ^2u + \Ga(u)(Du, Du) = f.
\end{equation}

For convenience, we will identify $\al$ with $\X$ and always think of $\al\in \g$ as a skew-symmetric matrix in the sequel.

\subsection{Poincar\'e inequality for flat connection}\label{s:poincare}

To obtain desired energy estimates for harmonic maps, a key tool is the Poincar\'e inequality. Recall that for any map $u\in
W^{2,2}(S^1, \Real^K)$, we have the Poincar\'e inequality
\[ \int_{S^1}|u_\th|^2d\th \le C \int_{S^1}|u_{\th\th}|^2d\th. \]
Here we show that the Poincar\'e inequality also holds for a flat connection $A$.

First recall that through parallel transportation, a flat connection $A$ over the cylinder can be identified with a connection over $S^1$,
which we still denote by $A$. Suppose the holonomy of $A$ around $S^1$ is $\Hol(A)$, then there exists a trivialization such that $A = \al
d\th$ and $\Hol(A) = \exp(2\pi \al)$, where $\al \in \g$(which lies in a compact subset $\k$ by Remark~\ref{r1}). Here, by using the
equivariant embedding and representation $\iota$ given by Theorem~\ref{t:MS}, we think of $\al$ as a skew-symmetric $K\times K$ matrix. We use $\pa =\p_\th +\al$ to denote the derivative on $S^1$ induced by $A$.

\begin{lem}\label{l:poincare}
Suppose $A=\al d\th$ is a flat connection on the cylinder. Then there exist a constant $C_A$ only depending on
$A$, such that the Poincar\'e type inequality
\begin{equation}\label{e:poincare}
  \int_{S^1}|\pa u|^2d\th \le C_A \int_{S^1}|\pa ^2 u|^2d\th
\end{equation}
holds for all maps $u\in W^{2,2}(S^1, \Real^K)$.
\end{lem}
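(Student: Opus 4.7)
The plan is to diagonalize the zeroth-order part of $\pa$ and reduce the inequality to a scalar Fourier series computation on $S^1$. Via the equivariant embedding of Theorem~\ref{t:MS}, $\al$ becomes a constant skew-symmetric $K\times K$ matrix; complexifying $\Real^K$ and choosing a unitary eigenbasis of $\al$ splits $\pa = \p_\th + \al$ into $K$ decoupled scalar operators, each of the form $u_j\mapsto u_j' + i\ld_j u_j$ for some real $\ld_j$ (the eigenvalues of $-i\al$). Since the original matrix is real, the components come in conjugate pairs, but this is irrelevant for the $L^2$ estimate.

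On each scalar component I would Fourier expand $u_j(\th) = \sum_{n\in\Integer} c_n^{(j)} e^{in\th}$, so that by Parseval
\[ \norm{\pa u}_{L^2(S^1)}^2 = 2\pi\sum_{n,j}(n+\ld_j)^2 \abs{c_n^{(j)}}^2, \qquad \norm{\pa^2 u}_{L^2(S^1)}^2 = 2\pi\sum_{n,j}(n+\ld_j)^4 \abs{c_n^{(j)}}^2. \]
The desired bound then reduces to the elementary pointwise inequality $(n+\ld_j)^2 \le C_A (n+\ld_j)^4$ for all $(n,j)$ for which the left-hand side is nonzero.

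The essential observation -- and the only real content of the lemma -- is that the set $\{\,\abs{n+\ld_j} : n\in\Integer,\ 1\le j\le K,\ n+\ld_j\ne 0\,\}$ admits a strictly positive infimum $d$ depending only on the spectrum of $\al$, hence only on the gauge-normalized connection. Indeed, for each $j$ with $\ld_j\notin\Integer$ the infimum over $n$ is $\operatorname{dist}(\ld_j,\Integer)>0$; for $\ld_j\in\Integer$ the value zero is attained at the single integer $n=-\ld_j$ and all other values are $\ge 1$. Taking $d>0$ to be the minimum of these finitely many positive quantities gives $(n+\ld_j)^4 \ge d^2(n+\ld_j)^2$ on the support of nonzero modes; summing over $n$ and $j$ yields the inequality with $C_A = d^{-2}$.

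The one delicate point -- and the feature that distinguishes this from the classical Poincar\'e inequality -- is the treatment of the degenerate modes with $n+\ld_j = 0$, which occur precisely when some $\ld_j$ is an integer, i.e.\ when the holonomy $\exp(2\pi\al)$ has fixed vectors. Such modes lie in $\ker\pa$ and therefore contribute nothing to $\norm{\pa u}_{L^2}^2$; one must check that they also contribute nothing to $\norm{\pa^2 u}_{L^2}^2$. This amounts to verifying $\ker\pa^2 = \ker\pa$ on $S^1$: any solution of $\pa^2 u = 0$ has the form $(A_j + B_j\th)\,e^{-i\ld_j\th}$ in the $j$-th eigencomponent, and $2\pi$-periodicity forces $B_j = 0$, whence $u\in\ker\pa$. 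It is here that the global geometry of $S^1$ is essential; on a true interval $\ker\pa^2$ would be strictly larger than $\ker\pa$, and the inequality would require a boundary term.
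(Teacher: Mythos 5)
Your proof is correct, but it takes a genuinely different route from the paper. The paper argues by contradiction and compactness: assuming the inequality fails along a sequence $u_n$ normalized so that $\int_{S^1}|\pa u_n|^2=1$ and $\int_{S^1}|\pa^2u_n|^2\le 1/n$, it extracts from $v_n:=\pa u_n$ a weak $W^{1,2}$ (strong $L^2$) limit $v$ with $\pa v=0$, and then uses skew-symmetry of $\al$ to integrate by parts, $\int(v_n,v)=-\int(u_n,\pa v)=0$, contradicting $\int|v|^2=1$. You instead diagonalize the constant skew-symmetric matrix $\al$ unitarily over $\Complex$ and compute the symbol of $\pa$ on Fourier modes, reducing everything to the positivity of $d=\inf\{|n+\ld_j|: n+\ld_j\neq 0\}$. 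Both arguments are sound; the subtlety you flag (modes with $n+\ld_j=0$ killing both sides, i.e.\ $\ker\pa^2=\ker\pa$) is exactly the structural reason the inequality can hold despite $\pa$ having a kernel, and it is handled automatically by your Parseval identities. What your approach buys is an explicit constant $C_A=d^{-2}$, which matches the paper's subsequent identification $C_A=1/\si_A^2$ with $\si_A^2$ the first positive eigenvalue of $L_A=-\pa^2$, and it makes completely transparent when $C_A$ blows up, namely when some $\ld_j$ approaches an integer, i.e.\ when the kernel of $\pa$ jumps; this is precisely the degeneration phenomenon the paper analyzes afterwards in Definition~\ref{d:degenerate} and Lemma~\ref{l:uniform}. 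The paper's compactness argument is shorter and requires no normal form for $\al$, but it is non-quantitative and the spectral interpretation has to be supplied separately, as the paper indeed does in the discussion following the lemma.
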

\begin{proof}
We prove by contradiction. Suppose the inequality (\ref{e:poincare}) is not true, then there exists $u_n\in W^{2,2}(S^1)$ such
that
\[\int_{S^1}|\pa u_n|^2d\th \ge n\int_{S^1}|\pa ^2 u_n|^2d\th. \]
By a rescaling, we may normalize $\int_{S^1}|\pa u_n|^2d\th = 1$. Then $\int_{S^1}|\pa ^2 u_n|^2d\th\le 1/n$. Then one easily checks
that $\norm{\pa u_n}_{W^{1,2}}\le C$. Therefore, $v_n:=\pa u_n$ sub-converges to some $v$ weakly in $W^{1,2}$ which satisfies
\[ \int_{S^1}|\pa v|^2d\th\le \liminf_{n\to \infty}\int_{S^1}|\pa ^2 u_n|^2d\th = 0. \]
Hence $\pa v=0$. On the other hand, by Sobolev embedding, $v_n$ converges strongly to the same map $v$ strongly in $L^{2}$. Thus,
\[ \int_{S^1}|v|^2d\th = \lim_{n\to \infty}\int_{S^1}|v_n|^2d\th = \lim_{n\to \infty}\int_{S^1}|\pa ^2 u_n|^2d\th = 1. \]
However, we also have
\[ \int_{S^1}(v_n, v)d\th = \int_{S^1}(\pa u_n, v)d\th = -\int_{S^1}(u_n, \pa v)d\th = 0. \]
Taking limit, we find that $\int_{S^1}|v|^2d\th=0$. A contradiction.
\end{proof}

A key difference of the above Poincar\'e inequality (\ref{e:poincare}) from the classical one is that the Poincar\'e constant $C_A$ depends on the flat connection $A$. Generally, the Poincar\'e constant can \emph{not} be chosen uniformly. This is one of the difficulties in proving the energy identity in Section~\ref{s:near} below.

To trace the dependence of $C_A$ on $A$, we study the elliptic operator
\[ L_A = -\pa^2: H^2(S^1)\to L^2(S^1). \]
Since $A$ is compatible with the metric, $L_A$ is self-adjoint and non-negative. Lemma~\ref{l:poincare} actually shows that the spectra of $L_A$ can not accumulate at zero for a fixed connection $A$. Thus we may let $\si_A^2>0$ be the first positive eigenvalue of $L_A$ and $v \in H^2(S^1)$ be an
eigenvector. If there exists $u$ such that $\pa u = v$, then $-\pa^3 u = \si_A^2 v$. It follows
\[  \int_{S^1}|\pa^2 u|^2d\th = -\int_{S^1}(\pa^3u, \pa u)d\th = \si_A^2 \int_{S^1}|\pa u|^2d\th. \]
Consequently, the Poincar\'e constant in~(\ref{e:poincare}) is just $C_A=1/ \si_A^2$ and can be arbitrarily large if $\si_A$ is close to 0. In fact, if $\si_A$ goes to zero as $A$ varies, there might be a jump of the fist positive eigenvalue of $L_A$. Or equivalently, the kernel of $L_A$ expands in this case. This explains why and when the constant $C_A$ may go to infinity. Also note that $\pa ^2 u = 0$ implies $\pa  u=0$, since we have
\begin{equation}\label{e:10}
 \int_{S^1}|\pa u|^2 = - \int_{S^1}(\pa^2 u, u)d\th.
\end{equation}
Thus we are led to the following definition.

\begin{defn}\label{d:degenerate}
Suppose $A_n=\al_n d\th \in \A(S^1), n\in \Integer$ is a sequence of flat connections which converges to $A=\al d\th \in \A(S^1)$ in $W^{1,2}$. The convergence of $A_n$ is called non-degenerating, if there exists $N\in \Integer$, such that for all $u\in \ker(\p_{\th, \al})$, we have $u\in \ker(\p_{\th, \al_n})$, $\forall n\ge N$. Otherwise, we say that the sequence is degenerating.
\end{defn}

An easy consequence is that the Poincar\'e constant is uniformly bounded if the convergence of $A_n$ is non-degenerating.

\begin{lem}\label{l:uniform}
Suppose $\{A_n\}_{n=1}^{\infty}\subset \A(S^1)$ is a non-degenerating sequence of connections, then the corresponding Poincar\'e constant $C_{A_n}$ given by Lemma~\ref{l:poincare} is uniformly bounded for sufficiently large $n$.
\end{lem}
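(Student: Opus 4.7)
My plan is to prove this by contradiction, combining the spectral identification of $C_{A_n}$ from the paragraph following Lemma~\ref{l:poincare} with a standard weak compactness argument on $S^1$, using the non-degeneracy hypothesis only at the very end to produce a contradiction.

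First I would recall and make precise the spectral picture. Since $\alpha_n\in\g$ is represented by a skew-symmetric matrix, the operator $\p_{\th,\al_n}=\p_\th+\al_n$ is skew-adjoint on $L^2(S^1,\Real^K)$, so $L_{A_n}:=-\p_{\th,\al_n}^2=\p_{\th,\al_n}^*\p_{\th,\al_n}$ is a non-negative self-adjoint elliptic operator on $S^1$ with compact resolvent, $\ker L_{A_n}=\ker\p_{\th,\al_n}$, and by the spectral decomposition together with the identity $\int_{S^1}|\p_{\th,\al_n}^2 u|^2 = \sum\lambda_k^2|c_k|^2$, $\int_{S^1}|\p_{\th,\al_n} u|^2=\sum\lambda_k|c_k|^2$, the optimal Poincar\'e constant is precisely $C_{A_n}=1/\si_{A_n}^2$, where $\si_{A_n}^2>0$ is the smallest positive eigenvalue of $L_{A_n}$. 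Thus uniform boundedness of $C_{A_n}$ is equivalent to a uniform positive lower bound on $\si_{A_n}^2$.

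Next, assume for contradiction that, along a subsequence, $\si_{A_n}\to 0$. Choose corresponding eigenfunctions $v_n\in H^2(S^1,\Real^K)$ with $L_{A_n}v_n=\si_{A_n}^2 v_n$, normalized by $\|v_n\|_{L^2(S^1)}=1$ and $v_n\perp\ker L_{A_n}$ in $L^2$. Then $\|\p_{\th,\al_n}v_n\|_{L^2}^2=\si_{A_n}^2\to 0$. Since $A_n\to A$ in $W^{1,2}$ the constants $\al_n$ stay bounded in $\g$, so $\p_\th v_n=\p_{\th,\al_n}v_n-\al_n v_n$ is bounded in $L^2$, i.e.\ $\{v_n\}$ is bounded in $W^{1,2}(S^1)$. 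By Rellich--Kondrachov I pass to a subsequence with $v_n\to v$ strongly in $L^2$ and weakly in $W^{1,2}$, so $\|v\|_{L^2}=1$; passing to the limit in $\p_\th v_n+\al_n v_n=\p_{\th,\al_n}v_n\to 0$ (weakly in $L^2$ on the left, strongly in $L^2$ on the right) yields $\p_{\th,\al}v=0$, i.e.\ $v\in\ker\p_{\th,\al}$.

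Finally I would invoke the non-degeneracy assumption (Definition~\ref{d:degenerate}): there exists $N$ such that $\ker\p_{\th,\al}\subset\ker\p_{\th,\al_n}=\ker L_{A_n}$ for every $n\ge N$. Hence $v\in\ker L_{A_n}$ for all $n\ge N$, while $v_n\perp\ker L_{A_n}$ gives $(v_n,v)_{L^2}=0$ for all $n\ge N$. Passing to the limit, $\|v\|_{L^2}^2=\lim_n(v_n,v)_{L^2}=0$, contradicting $\|v\|_{L^2}=1$. This contradiction forces $\liminf_n\si_{A_n}>0$, and therefore $C_{A_n}=1/\si_{A_n}^2$ is uniformly bounded for all $n$ sufficiently large. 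The only potential obstacle is ensuring the spectral identification $C_{A_n}=1/\si_{A_n}^2$ is genuinely the sharp constant (so that a lower bound on $\si_{A_n}$ controls $C_{A_n}$), and that skew-adjointness of $\p_{\th,\al_n}$ is used correctly; both are straightforward once $\al_n$ is realized as a skew-symmetric matrix via the equivariant embedding of Theorem~\ref{t:MS}.
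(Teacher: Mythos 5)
Your proof is correct and follows essentially the same route as the paper's: a compactness/contradiction argument producing a sequence $v_n$ with $\|v_n\|_{L^2}=1$ and $\|\p_{\th,\al_n}v_n\|_{L^2}\to 0$, a strong $L^2$ limit $v$ lying in $\ker\p_{\th,\al}$, and then the non-degeneracy hypothesis combined with orthogonality of $v_n$ to that kernel to force $\|v\|_{L^2}=0$. The only cosmetic difference is that you obtain $v_n$ as lowest positive eigenfunctions of $L_{A_n}$ and get the orthogonality $(v_n,v)_{L^2}=0$ directly, whereas the paper takes $v_n=\p_{\th,\al_n}u_n$ from a hypothetical violating sequence and derives the same orthogonality by integrating by parts against $u_n$; both are valid.
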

\begin{proof}
We follow a similar argument as in the proof of Lemma~\ref{l:poincare}. Assume that there exists a sequence of maps $u_n \in W^{2,2}(S^1)$, such that $\lim_{n\to \infty}A_n =A$ and
\[\int_{S^1}|\p_{\th, \al_n} u_n|^2d\th \ge n\int_{S^1}|\p_{\th, \al_n}^2 u_n|^2d\th. \]
After rescaling, we may assume that for $v_n:= \p_{\th, \al_n} u_n$ we have $\int_{S^1}|v_n|^2d\th=1$ and $\int_{S^1}|\p_{\th, \al_n}v_n|^2d\th\le 1/n$. Then one easily verifies that $v_n$ sub-converges to some map $v$ weakly in $W^{1,2}$ and strongly in $L^2$. Therefore, one can deduce that $\pa v=0$.

Since $A_n$ is non-degenerate, we have $\p_{\th, \al_n}v=0$ for sufficiently large $n$. It follows
$$\int_{S^1}|v|^2d\th = \lim_{n\to \infty}\int_{S^1}(v_n, v)d\th = \lim_{n\to \infty}\int_{S^1}|u_n, \p_{\th, \al_n}v|^2d\th = 0.$$
However, this contradicts to the fact that
$$\int_{S^1}|v|^2d\th = \lim_{n\to \infty}\int_{S^1}|v_n|^2d\th = 1.$$
Thus the assumption is false and the lemma is true.
\end{proof}

In fact, we could say more about the kernel of $\pa$. In~\cite{MT}, the authors introduced the following definition. Denote the fixed point set of the whole Lie group $G$ on $M$ by $M_0$ and, for an element $\al \in \g$, denote the fixed point set of $\exp(2\pi \al)$ by $M^\al$. Obviously, $M_0$ is a subset of $M^\al$ for any $\al \in \g$.

\begin{defn}\label{d:degenerate1}
  An element $\al \in \g$ is called non-critical if $M^\al = M_0$, and critical otherwise.
\end{defn}

Observe that, for a connection $A=\al d\th$ where $\al \in \g$, if $u$ is a map such that $\pa u =0$, then $u(0)=u(2\pi)$ is a fixed point of $\exp(2\pi \al)\in G$. Then it easy to see the relationship between the above two definitions. Indeed, if $\al$ is non-critical, then any $u\in \ker(\p_{\th, \al})$ must be a constant map which lies in the fixed point set $M_0$. Thus the kernel of $\p_{\th, \al}$ is minimal and any sequence $A_n$ which converges to $A$ is non-degenerate. On the other hand, if $\al$ is critical, then there exists a map $u\in \ker(\pa)$ which is not contained in $M_0$ and $M^\al$ is strictly larger than $M_0$. Moreover, the image of $u$ is a closed orbit fixed by $\exp(2\pi \al)$. Therefore we have the following corollary, which states that the constant $C_A$ stays bounded near a non-critical connection.

\begin{cor}\label{c:uniform}
Suppose $A=\al d\th\in \A(S^1)$ is a connection and $\al\in \g$ is non-critical, then there exists an open neighborhood $\mathscr{U}_\al\subset \g$ near $\al$, such that the corresponding Poincar\'e constant given by Lemma~\ref{l:poincare} is uniformly bounded for all connections $A'=\al'd\th, \al'\in \mathscr{U}_\al$.
\end{cor}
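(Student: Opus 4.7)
The plan is to combine Lemma~\ref{l:uniform} with the rigidity of $\ker(\pa)$ available when $\al$ is non-critical. Concretely, I will argue by contradiction and show that every sequence $\al_n\to\al$ in $\g$ is automatically non-degenerating in the sense of Definition~\ref{d:degenerate}, so that Lemma~\ref{l:uniform} immediately yields a uniform bound on the associated Poincar\'e constants for all $\al'$ in a suitable neighborhood $\mathscr{U}_\al$ of $\al$.

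Suppose, toward a contradiction, that no such neighborhood exists. Then there is a sequence $\al_n\in\g$ with $\al_n\to\al$ such that the Poincar\'e constants $C_{A_n}$ of $A_n=\al_n d\th$ satisfy $C_{A_n}\to\infty$. Taking the contrapositive of Lemma~\ref{l:uniform}, the sequence $A_n$ must be degenerating. By Definition~\ref{d:degenerate}, after passing to a subsequence there exists $u\in\ker(\pa)$ with $u\notin\ker(\pn)$ for every $n$.

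To close the argument, I invoke the description of $\ker(\pa)$ recorded just after Definition~\ref{d:degenerate1}: since $\pa u=0$ is equivalent to $\p_\th u=-X_\al(u)$, the solution is $u(\th)=\exp(-\th X_\al)(u(0))$, and the periodicity $u(0)=u(2\pi)$ places $u(0)$ in $M^\al$. Non-criticality then gives $u(0)\in M^\al=M_0$, so $X_\al$ vanishes along $u$ and $u\equiv p$ is the constant map with $p\in M_0$. The crucial point now is universal: since $p$ is fixed by \emph{every} element of $G$, one has $X_{\al'}(p)=\al'\cdot p=0$ for any $\al'\in\g$, and hence $\p_{\th,\al'}u=0$. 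Applied to each $\al_n$, this gives $u\in\ker(\pn)$ for every $n$, contradicting the choice of $u$.

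The whole argument hinges on the universal inclusion $M_0\subset M^{\al'}$ for every $\al'\in\g$, which is what makes $\ker(\pa)$ at a non-critical $\al$ a ``universal'' subspace sitting inside every $\ker(\p_{\th,\al'})$, and so turns non-criticality into a genuine local stability condition. There is no real analytic obstacle: Lemma~\ref{l:uniform} does all of the heavy lifting and the remaining content is purely algebraic. One sanity check is that the argument breaks down exactly at critical $\al$, where $\ker(\pa)$ can contain genuinely $\th$-varying loops tracing out closed $G$-orbits; such loops need not persist in $\ker(\p_{\th,\al'})$ for nearby $\al'$, which is precisely the degenerating behavior the corollary excludes in its own regime.
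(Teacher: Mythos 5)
Your proof is correct and follows essentially the same route the paper intends: non-criticality forces $\ker(\pa)$ to consist of constant maps into $M_0$, which lie in $\ker(\p_{\th,\al'})$ for every $\al'\in\g$, so every sequence $\al_n\to\al$ is non-degenerating and Lemma~\ref{l:uniform} yields the uniform bound. The paper leaves the corollary as an immediate consequence of the discussion following Definition~\ref{d:degenerate1}; your contradiction framing and the explicit observation that points of $M_0$ are annihilated by every $X_{\al'}$ simply make that reasoning precise.
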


\subsection{Exponential decay of angular energy}\label{s1}

In this section, by following a method similar to the one in \cite{LinW}(see also \cite{P,Z}), we show that for a solution $u$ to the equation (\ref{e:section}), the $\th$-direction energy
\begin{equation}\label{e5}
 \Th(t) = \int_{\{t\}\times S^1}|\pa u|^2 d\th
\end{equation}
decays exponentially as $t$ goes from either end to the center along the cylinder. We still denote $\si_A := (1/C_A)^{1/2}$ where $C_A$ is the Pincar\'e constant given by Lemma~\ref{l:poincare}.

\begin{lem}\label{l1}
Suppose $A$ is a flat connection and $u$ is a solution to equation (\ref{e3}). Let $\Th(t)$ be the energy defined by (\ref{e5}). Then
there exists a constant $\ep_A$ depending only on $\si_A$, such that if
\begin{equation}\label{e:small1}
  \norm{Du}_{C^0(\C_T)} \le \ep_A.
\end{equation}
then for any $t\in [-T, T]$,
\begin{equation}\label{ee1}
  \Th'' \ge \frac{1}{C_A}\Th - 8\int_{S^1}|f|^2 d\th.
\end{equation}
\end{lem}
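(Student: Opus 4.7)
The plan is to differentiate $\Th(t)$ twice in $t$, substitute the Euler--Lagrange equation~(\ref{e3}) for $\p_t^2 u$, and then extract the claimed coercive term from $\int|\pa^2 u|^2$ using the Poincar\'e inequality of Lemma~\ref{l:poincare}. Two features make this computation clean. First, since $A$ is flat and we are in the balanced temporal gauge of Lemma~\ref{l:gauge}, we may write $A = \al\,d\th$ with $\al\in\g$ a \emph{constant} element (which, via the equivariant embedding of Theorem~\ref{t:MS}, acts as a constant skew-symmetric matrix on $\Real^K$); hence $[\p_t,\pa]=0$ and $\pa^*=-\pa$, so integration by parts in $\th$ is painless. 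Second, the $C^0$ smallness $\norm{Du}_{C^0}\le \ep_A$ will be used to dominate the nonlinear second-fundamental-form term.

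Concretely, differentiating twice and integrating by parts in $\th$ yields
\[
\tfrac{1}{2}\Th''(t) = \int_{S^1}|\pa \p_t u|^2\,d\th - \int_{S^1}\langle \pa^2 u,\p_t^2 u\rangle\,d\th.
\]
Substituting $\p_t^2 u = -\pa^2 u - \Ga(u)(Du,Du) + f$ from~(\ref{e3}) gives
\[
\tfrac{1}{2}\Th'' = \int|\pa \p_t u|^2 + \int|\pa^2 u|^2 + \int\langle \pa^2 u,\Ga(u)(Du,Du)\rangle - \int\langle \pa^2 u,f\rangle.
\]
Lemma~\ref{l:poincare} supplies $\int|\pa^2 u|^2\ge \frac{1}{C_A}\Th$, which produces the main coercive term on the right of the claim. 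The forcing integral is handled routinely by Young's inequality, which bounds $2\bigl|\int\langle\pa^2 u,f\rangle\bigr|$ by a small fraction of $\int|\pa^2 u|^2$ plus a multiple of $\int|f|^2$ --- the latter being exactly the allowed error in the statement.

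The main obstacle is the nonlinear term $\int\langle\pa^2 u,\Ga(u)(Du,Du)\rangle$: a naive Cauchy--Schwarz bound picks up the uncontrolled quantity $\int|\p_t u|^2$ or produces an additive constant of order $\ep_A^4$ that cannot be written as $\int|f|^2$. The key observation is that $\Ga(u)(Du,Du)$ lies in the normal bundle of $M\hookrightarrow\Real^K$, so only the normal part of $\pa^2 u$ contributes to the pairing. Expanding $\pa^2 u = \p_\th^2 u + 2\al\p_\th u + \al^2 u$ and using $(\p_\th^2 u)^\perp = \Ga(\p_\th u,\p_\th u)$ together with $(\al V)^\perp = \Ga(X(u),V)$ for tangent $V$, bilinearity of $\Ga$ collapses the normal part to the clean identity $(\pa^2 u)^\perp = \Ga(u)(\pa u,\pa u)$. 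Hence
\[
\Bigl|\int\langle\pa^2 u,\Ga(u)(Du,Du)\rangle\Bigr| \le C\int|\pa u|^2|Du|^2 \le C\ep_A^2\int|\pa u|^2 = C\ep_A^2\,\Th,
\]
where the last step uses $|Du|\le\ep_A$ pointwise. Choosing $\ep_A$ small enough (in a manner depending only on $\si_A = C_A^{-1/2}$ and the ambient geometry of $M$) so that $C\ep_A^2 \le \tfrac{1}{2C_A}$ lets this term be absorbed into the Poincar\'e main term; assembling the three estimates yields $\Th'' \ge \frac{1}{C_A}\Th - 8\int_{S^1}|f|^2\,d\th$ after tracking the Young's-inequality constants.
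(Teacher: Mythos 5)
Your proposal is correct and follows essentially the same route as the paper: differentiate $\Th$ twice, integrate by parts in $\th$, substitute the equation for $\p_t^2 u$, control the forcing term by Young's inequality, observe that $\Ga(u)(Du,Du)$ is normal so that only the normal part $(\pa^2 u)^\perp=\pm\Ga(u)(\pa u,\pa u)$ contributes to the nonlinear pairing, and absorb the resulting $C\ep_A^2\Th$ into the Poincar\'e term $\int|\pa^2 u|^2\ge \si_A^2\Th$. The "key observation" you highlight is exactly the identity $\pa^2 u=\na^2 u-\Ga(u)(\pa u,\pa u)$ used in the paper's proof, and your constant bookkeeping matches the paper's.
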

\begin{proof}
Direct computation and integration by parts yield
\begin{align*}
\frac 12 \frac{d^2}{dt^2}\Th(t)
& = \frac 12\frac{d^2}{dt^2}\int_{S^1}|\pa u|^2\\
& = \int_{S^1}|\pa u_t|^2 + \int_{S^1}(\pa  u, \pa u_{tt})\\
& = \int_{S^1}|\pa  u_t|^2 - \int_{S^1}(\pa ^2 u, u_{tt}).
\end{align*}
Using equation (\ref{e3}) to substitute $u_{tt}$, we get
\begin{equation}\label{2}
\frac 12 \frac{d^2}{dt^2}\Th(t) \ge \int_{S^1}|\pa ^2 u|^2 - \int_{S^1}(\pa ^2 u, f)+ \int_{S^1}(\pa ^2 u, \Ga(u)(D u, D u)).
\end{equation}
Next we estimate the last two terms of the above equality. Applying the Young's inequality, we have
\begin{equation}\label{3}
\Abs{\int_{S^1}(\pa ^2 u, f)} \le \frac14\int_{S^1}|\pa ^2u|^2 + 4\int_{S^1}|f|^2.
\end{equation}
On the other hand, recall that
\[ \pa ^2 u = \na ^2 u-\Ga(u)(\pa u, \pa u).\]
It follows
\begin{equation}\label{22}
\begin{aligned}
\int_{S^1}(\pa ^2 u, \Ga(u)(D u, D u)) &= \int_{S^1}(\Ga(u)(\pa u, \pa u), \Ga(u)(D u, D u))\\
&\le C(\Ga)|Du|^2\int_{S^1}|\pa u|^2,
\end{aligned}
\end{equation}
where $C(\Ga)$ is a constant depending on $\Ga$. Combining  (\ref{2}), (\ref{3}) and (\ref{22}), we arrive at
\begin{equation*}
\frac 12 \frac{d^2}{dt^2}\Th(t) \ge \frac34\int_{S^1}|\pa ^2 u|^2 - C(\Ga)|Du|^2\int_{S^1}|\pa u|^2 -  4\int_{S^1}|f|^2.
\end{equation*}
Now applying the Poincar\'e inequality in Lemma~\ref{l:poincare}, we obtain
\begin{equation*}
\frac 12 \frac{d^2}{dt^2}\Th(t) \ge (\frac{3}{4}\si_A^2 - C(\Ga)|Du|^2)\int_{S^1}|\pa  u|^2 -  4\int_{S^1}|f|^2.
\end{equation*}
Thus if $\ep_A$ is small enough such that
\begin{equation*}
  C(\Ga)\ep_A^2 \le \frac14\si_A^2,
\end{equation*}
then we get the desired inequality (\ref{ee1}).
\end{proof}

The next lemma shows that $\Th(t)$ decays exponentially along the cylinder.

\begin{lem}\label{l2}
Under the hypothesis of Lemma~\ref{l1}, we have
\begin{equation}\label{e6}
|\Th(t)| \le e^{\si_A(|t|-T)}\Big(\Th(T) + \Th(-T)+ \frac8{\si_A^2}\sup_{t\in[-T, T]} \int_{\{t\}\times S^1}|f|^2d\th + Ce^{-\si_AT}\Big).
\end{equation}
\end{lem}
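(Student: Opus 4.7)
The plan is to translate the second-order differential inequality from Lemma~\ref{l1} into a one-dimensional ODE comparison and invoke the weak maximum principle. Set $g := 8\sup_{t\in[-T,T]}\int_{\{t\}\times S^1}|f|^2d\th$, so that Lemma~\ref{l1} reads $\Th''(t) - \si_A^2\Th(t) \ge -g$ on $[-T,T]$, where $\si_A^2 = 1/C_A$. The linear operator $L := \p_t^2 - \si_A^2$ admits the fundamental solutions $e^{\pm\si_A t}$ and satisfies a maximum principle: if $u''\ge\si_A^2 u$ with $u(\pm T)\le 0$, then $u\le 0$ on $[-T,T]$, since a positive interior maximum at $t_0$ would give the contradiction $0 \ge u''(t_0)\ge \si_A^2 u(t_0)>0$.

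Accordingly, I would construct the explicit super-solution $\Phi$ to the equality $L\Phi = -g$ with $\Phi(\pm T) = \Th(\pm T)$, namely
\[
\Phi(t) = \frac{g}{\si_A^2} + \Bigl(\Th(T)-\frac{g}{\si_A^2}\Bigr)\frac{\sinh(\si_A(t+T))}{\sinh(2\si_A T)} + \Bigl(\Th(-T)-\frac{g}{\si_A^2}\Bigr)\frac{\sinh(\si_A(T-t))}{\sinh(2\si_A T)}.
\]
Applying the maximum principle to $\Psi := \Th - \Phi$, which satisfies $\Psi''\ge\si_A^2\Psi$ with $\Psi(\pm T)=0$, gives $\Th\le \Phi$ pointwise on $[-T,T]$. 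I would then bound $\Phi(t)$ by the elementary hyperbolic estimates $\sinh(\si_A(t+T))\le \tfrac12 e^{\si_A(t+T)}$ and $\sinh(2\si_A T)\ge \tfrac14 e^{2\si_A T}$ (for $T$ not too small), obtaining $\sinh(\si_A(t+T))/\sinh(2\si_A T)\le 2 e^{\si_A(t-T)}\le 2e^{\si_A(|t|-T)}$ and, symmetrically, $\sinh(\si_A(T-t))/\sinh(2\si_A T)\le 2e^{\si_A(|t|-T)}$. Collecting these yields the claimed exponential decay of the boundary-driven contributions.

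The main technical bookkeeping is the treatment of the particular-solution constant $g/\si_A^2$, which does not intrinsically come with a decay factor $e^{\si_A(|t|-T)}$. In~(\ref{e6}) this contribution is absorbed into the last two terms of the big parenthesis; the additive error $Ce^{-\si_A T}$ must be chosen large enough to swallow the residual $O(e^{-2\si_A T})$ corrections coming from the cross-terms of $\sinh(2\si_A T) = \tfrac12(e^{2\si_A T}-e^{-2\si_A T})$ in the denominator. In the later application, $f$ inherits an exponential smallness from the rescaling parameter $\ld_n$, so this issue is essentially automatic, but presenting the estimate uniformly in the stated generality requires some care with the constants. Apart from this bookkeeping, the argument is a routine scalar maximum-principle comparison.
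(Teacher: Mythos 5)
Your comparison-function setup is sound as far as it goes: $\Phi$ does solve $L\Phi=-g$ with the stated boundary values, and the maximum principle does give $\Th\le\Phi$ on $[-T,T]$. The gap is the final step, where you claim the constant $g/\si_A^2$ ``is absorbed into the last two terms of the big parenthesis.'' It cannot be. In (\ref{e6}) the term $\frac{8}{\si_A^2}\sup_t\int|f|^2\,d\th=g/\si_A^2$ sits \emph{inside} the parenthesis that is multiplied by $e^{\si_A(|t|-T)}$, so the lemma asserts that the forcing contribution itself decays exponentially toward the middle of the cylinder. Your $\Phi$ has no such decay: using $\sinh(\si_A(t+T))+\sinh(\si_A(T-t))=2\sinh(\si_A T)\cosh(\si_A t)$ one finds
\begin{equation*}
\Phi(t)=\frac{g}{\si_A^2}\Bigl(1-\frac{\cosh(\si_A t)}{\cosh(\si_A T)}\Bigr)+\Th(T)\,\frac{\sinh(\si_A(t+T))}{\sinh(2\si_A T)}+\Th(-T)\,\frac{\sinh(\si_A(T-t))}{\sinh(2\si_A T)},
\end{equation*}
so $\Phi(0)\approx g/\si_A^2$, whereas the right-hand side of (\ref{e6}) at $t=0$ is $e^{-\si_A T}(\cdots)$. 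The discrepancy is a factor of $e^{\si_A T}$, not an $O(e^{-2\si_A T})$ cross-term from the denominator, and no $T$-independent choice of $C$ can swallow it. The loss is intrinsic to replacing $F(t)$ by its supremum at the outset: $\frac{g}{\si_A^2}\bigl(1-\cosh(\si_A t)/\cosh(\si_A T)\bigr)$ is itself an exact solution of $\Th''=\si_A^2\Th-g$ with zero boundary data, so the constant-majorant comparison can never produce better than $g/\si_A^2$ in the interior; in particular the weaker bound you actually prove, when integrated over $[-T,T]$ as in Corollary~\ref{c1}, picks up an extra factor of $T$ that the corollary does not allow.

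The paper's proof keeps the $t$-dependent forcing: it compares $\Th$ with the solution $\Th_0$ of $\Th_0''=\si_A^2\Th_0-F(t)$, $F(t)=8\int_{\{t\}\times S^1}|f|^2d\th$, written by variation of parameters as $\Th_0(t)=A(t)e^{\si_A t}+B(t)e^{-\si_A t}$, and then controls $A(T)e^{\si_A T}+B(-T)e^{\si_A T}$ through the boundary data and $\sup F$; that is how the decaying prefactor ends up multiplying the forcing term as well. (In every application of the lemma, $f$ is moreover exponentially bounded along the cylinder, which is the real source of decay of the forcing contribution; taking the supremum at the first step discards exactly that information.) To repair your argument you would need to carry $F(t)$ through the comparison rather than its supremum.
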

\begin{proof}
Define $$F(t) := 8\int_{\{t\}\times S^1}|f|^2d\th.$$
Let $\Th_0(t)$ be the solution of
\[ \Th_0'' = \si_A^2 \Th_0 - F \]
on the interval $[-T, T]$ with boundary values
\begin{equation}\label{e32}
\Th_0(-T) = \Th(-T), \quad \Th_0(T) = \Th(T).
\end{equation}
By Lemma \ref{l1}, the energy $\Th(t)$ given by (\ref{e5}) satisfies
\[ \Th'' \ge \si_A^2 \Th - F. \]
Then the maximal principle implies that $\Th(t) \le \Th_0(t)$.

It is easy to verify that the solution $\Th_0$ is explicitly given by
\[ \Th_0(t) = A(t)e^{\si_At} + B(t)e^{-\si_At}  \]
where
\[ A(t) = \frac1{2\si_A}\int_0^t F(s)e^{-\si_As}ds + C_1, \quad B(t) = \frac1{2\si_A}\int_0^{-t} F(-s)e^{-\si_As}ds + C_2 \]
and $C_1, C_2$ are constants determined by the boundary data (\ref{e32}). Obviously, the function $A(t)$ is increasing and $B(t)$ is decreasing since
$A'(t) \ge 0$ and $B'(t) \le 0$. Also note that $\Th_0(t)$ is always positive. Then for any $t$, a simple calculation yields
\[ \abs{\Th_0(t)} = A(t)e^{\si_At} + B(t)e^{-\si_At} \le (A(T) + B(-T))e^{\si_A|t|} \le e^{\si_A(|t|-T)}(A(T)e^{\si_AT}+B(-T)e^{\si_AT}). \]
By the boundary condition (\ref{e32}), we have
\begin{equation*}
|A(T)e^{\si_AT}+B(-T)e^{\si_AT}| \le \Th(T) + \Th(-T) + \abs{A(-T)e^{-\si_AT}} + \abs{B(T)e^{-\si_AT}}.
\end{equation*}
On the other hand,
\[ \abs{A(-T)e^{-\si_AT}} \le \abs{\frac1{2\si_A}\sup_{t\in[-T, 0]} F(t)\int_0^{-T}e^{-\si_A(s+T)}ds + C_1e^{-\si_AT}} \le
\frac1{2\si_A^2}\sup_{t\in[-T, 0]} F(t) + C_1e^{-\si_AT}. \]
Similarly, we have
\[ \abs{B(T)e^{-\si_AT}} \le \abs{\frac1{2\si_A}\sup_{t\in[0,T]} F(t)\int_0^{-T}e^{-\si_A(s+T)}ds + C_2e^{-\si_AT}} \le
\frac1{2\si_A^2}\sup_{t\in[0,T]} F(t) + C_2e^{-\si_AT}. \]
Combining the above inequalities, we conclude that
\begin{equation*}
|\Th(t)| \le e^{\si_A(|t|-T)}\Big(\Th(T) + \Th(-T) + \frac1{\si_A^2}\sup_{t\in[-T, T]} F(t) + (C_1+C_2)e^{-\si_AT} \Big)
\end{equation*}
and the lemma follows.
\end{proof}

Obviously, we have
\[ \Th(T) + \Th(-T) \le C\norm{Du}_{C^0(\C_T)}. \]
Integrating (\ref{e6}), we immediately obtain the following estimate of angular energy of $u$.

\begin{cor}\label{c1}
Under the hypothesis of Lemma~\ref{l1}, there is a constant $C(\si_A)$ depending on $\si_A$ such that for $T$ sufficiently large,
we have
\begin{equation*}
\int_{-T}^T |\Th(t)|dt \le C(\si_A)\Big(\norm{Du}_{C^0(\C_T)}+ \norm{f}^2_{L^\infty(\C_T)}\Big).
\end{equation*}
\end{cor}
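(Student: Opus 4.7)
The plan is simply to integrate the pointwise exponential bound provided by Lemma~\ref{l2} over the interval $[-T,T]$. First I would compute the integral of the exponential factor
\[
\int_{-T}^{T} e^{\si_A(|t|-T)}\,dt = \frac{2}{\si_A}\bigl(1-e^{-\si_A T}\bigr) \le \frac{2}{\si_A},
\]
which yields a base constant $2/\si_A$ multiplying the right-hand side of (\ref{e6}).

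Next I would control each of the $t$-independent factors appearing inside the parenthesis of (\ref{e6}). For the boundary terms I use the fact that $\pa u$ is just the $\th$-component of $Du$ (up to the inclusion of the vertical direction), so
\[
\Th(T) + \Th(-T) \le 4\pi\,\norm{Du}_{C^0(\C_T)}^{2} \le 4\pi\ep_A\,\norm{Du}_{C^0(\C_T)},
\]
where the smallness hypothesis (\ref{e:small1}) converts a square into a first power and matches the linear-in-$\norm{Du}_{C^0}$ form in the corollary. The Higgs-error term is bounded pointwise by $\sup_t \int_{\{t\}\times S^1}|f|^2\,d\th \le 2\pi\norm{f}^2_{L^\infty(\C_T)}$, and the residual $Ce^{-\si_A T}$ is uniformly bounded (and in fact decays) as $T\to\infty$.

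Assembling these ingredients gives
\[
\int_{-T}^T \Th(t)\,dt \le \frac{2}{\si_A}\Bigl(4\pi\ep_A \norm{Du}_{C^0(\C_T)} + \tfrac{16\pi}{\si_A^2}\norm{f}^2_{L^\infty(\C_T)} + Ce^{-\si_A T}\Bigr),
\]
and absorbing the $O(e^{-\si_A T})$ term for $T$ large proves the claim with an explicit constant $C(\si_A)$. There is no real obstacle here; the calculation is purely bookkeeping on top of Lemma~\ref{l2}. The one point worth flagging is the dependence of $C(\si_A)$ on $A$: the factor $1/\si_A$ from the exponential integration combines with the Poincar\'e constant $1/\si_A^2$ already present in (\ref{e6}) to give $C(\si_A) \sim \si_A^{-3}$. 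This blow-up is harmless for a fixed non-degenerate connection (by Lemma~\ref{l:uniform} and Corollary~\ref{c:uniform}), but it is exactly the source of the difficulty that Section~\ref{s:near} has to handle when $A_n$ degenerates, so it is worth recording carefully rather than absorbing into an opaque constant.
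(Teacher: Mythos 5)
Your proposal is correct and is exactly the paper's argument: the paper likewise just integrates the exponential bound (\ref{e6}) of Lemma~\ref{l2} over $[-T,T]$, using $\int_{-T}^{T}e^{\si_A(|t|-T)}dt\le 2/\si_A$ together with $\Th(T)+\Th(-T)\le C\norm{Du}_{C^0(\C_T)}$ (the same conversion of the square to a first power via the smallness hypothesis) and absorbing the $Ce^{-\si_AT}$ remainder for $T$ large. Your explicit tracking of the $\si_A^{-3}$ dependence is a worthwhile addition but not a different method.
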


\subsection{A gap theorem}

There is a useful conclusion which we can draw from the exponential decay of angular energy obtained above, which will be applied in the blow-up analysis below. First we introduce the following definitions.
\begin{defn}\label{d:twisted-harmonic}
A map $u$ satisfying $D^*D u = 0$ is called a twisted harmonic map (with respect to connection $A$).
\end{defn}
The twisted harmonic map is a natural generalization of classical harmonic map and has an interesting relation with the self-duality equations(cf. \cite{D}). In particular, if the connection is trivial, then we have $D=\nabla$, the Levi-Civita connection on $M$, and the equation becomes
\[ \tau(u) := -\n^*\n u = 0. \]
Thus a twisted harmonic map with respect to a trivial connection is just a harmonic map. Recall that a geodesic is a 1 dimensional harmonic map. Here we also have a parallel notion of geodesic for twisted harmonic maps.

\begin{defn}\label{d:twisted-geodesic}
Suppose $A=\al d\th$ is a flat connection on $\Real\to S^1$ and $u$ is a twisted harmonic map. If $u$ satisfies $\p_{\th, \al} u = 0$, then we say $u$ is a twisted geodesic.
\end{defn}
Obviously, a twisted geodesic satisfies the equation for geodesic
\[ \p_t u + \Ga(u)(\p_t u, \p_t u) = 0 \]
for any fixed $\th\in S^1$ since $\p_{\th, \al} u = 0$ vanishes. The image of a twisted geodesic is a closed orbit of a geodesic. Namely, for each fixed $\th\in S^1$, the curve $l_\th := u(\cdot,
\th):\Real \to M$ is a geodesic, while for each fixed $t \in \Real^1$, the curve $c_t : =u(t, \cdot):S^1\to M$ is a closed
orbit under the action of the one-parameter subgroup generated by $\al$. In other words, $u$ lies in the fixed point set $M^\al$ of $\al$. In particular, if $\al$ in non-critical, then $u$ is nothing but a geodesic lying in the fixed point set $M_0$.

It is well-know that there is a gap theorem for harmonic maps. Namely, a harmonic map with energy less than a certain small constant depending on the target manifold must be trivial~\cite{SU}. Here we show an analogous result for twisted harmonic maps.
\begin{thm}\label{t:energy-lower-bound}
Suppose $A=\al d\th$ is a flat connection and $u$ is a twisted harmonic map on the cylinder $\Real^1 \times S^1$ which satisfies $D^*D u =
0$. There exist a constant $\ep_A'$ depending on $A$ such that if
$$\sup_{t\in \Real}\norm{D u}_{L^2(\C_1(t))} \le \ep_A',$$
then $u$ is a twisted geodesic. In particular, if $\al$ is non-degenerate, then $u$ is a geodesic lying in the fixed point set $M_0$.
\end{thm}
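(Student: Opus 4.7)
The plan is to combine the exponential decay of the angular energy $\Th(t) = \int_{\{t\}\times S^1}|\pa u|^2\,d\th$ from Lemma~\ref{l2} (with $f \equiv 0$) with the $\ep$-regularity of Lemma~\ref{l:reg2}, and then let the length of the cylinder tend to infinity. First, I would upgrade the local $L^2$ smallness hypothesis to a pointwise bound. Since $u$ satisfies (\ref{e:section}) with $f=0$, Lemma~\ref{l:reg2} applied on every unit sub-cylinder $\C_1(t_0)$ gives
$$\norm{Du}_{C^0(\Real \times S^1)} \le C\sup_{t\in\Real}\norm{Du}_{L^2(\C_1(t))} \le C\ep_A'.$$
I would choose $\ep_A'$ small enough (depending on $\si_A$ and the second fundamental form $\Ga$) so that $C\ep_A' \le \ep_A$, where $\ep_A$ is the threshold appearing in Lemma~\ref{l1}. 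In particular $\Th$ is bounded uniformly on $\Real$, and the hypotheses of Lemma~\ref{l2} are met on every sub-cylinder of $\Real \times S^1$.

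The decay step is then immediate: fix any $t_0\in\Real$, and for each $T>1$ apply Lemma~\ref{l2} on the centered sub-cylinder $[t_0-T, t_0+T]\times S^1$. With $f\equiv 0$, the estimate reduces to
$$\Th(t_0) \le e^{-\si_A T}\Big(\Th(t_0+T) + \Th(t_0-T) + C e^{-\si_A T}\Big),$$
and the right-hand side tends to $0$ as $T\to\infty$ because the two boundary values are controlled by $\norm{Du}_{C^0}^2$. Hence $\Th(t_0)=0$ for every $t_0$, so $\pa u \equiv 0$ on the whole cylinder. By Definition~\ref{d:twisted-geodesic}, $u$ is a twisted geodesic.

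For the non-critical assertion, I would use the ODE interpretation of $\pa u = 0$. For each fixed $t$, the loop $\th \mapsto u(t,\th)$ solves $u_\th = -X(u)$, whose flow is $\Phi_{-\th}$; periodicity of $u(t,\cdot)$ forces $u(t,0)$ to be fixed by $\exp(2\pi\al)$, i.e.\ $u(t,0)\in M^\al$. If $\al$ is non-critical, Definition~\ref{d:degenerate1} gives $M^\al = M_0$, and since $G$ acts trivially on $M_0$, we have $X\equiv 0$ along $u$; consequently $\p_\th u = \pa u - X(u) = 0$, so $u = u(t)$ depends only on $t$. The equation $D^*Du = 0$ then collapses to $\p_t^2 u + \Ga(u)(\p_t u, \p_t u) = 0$, exhibiting $u$ as a geodesic in $M_0$.

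The only mildly delicate point is the choice of $\ep_A'$: it must be small enough to trigger the $\ep$-regularity of Lemma~\ref{l:reg2} \emph{and} to yield a $C^0$ bound below the threshold $\ep_A$ of Lemma~\ref{l1}. This forces $\ep_A'$ to depend on $A$ via the Poincar\'e constant $\si_A$ and on the ambient geometry of $M$, which is consistent with the statement. Everything else is a clean passage to the limit; no genuinely new analytic input beyond the decay Lemma~\ref{l2} is required.
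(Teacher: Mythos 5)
Your proposal is correct and follows essentially the same route as the paper: $\ep$-regularity (Lemma~\ref{l:reg2}) to convert the local $L^2$ smallness into a $C^0$ bound below the threshold of Lemma~\ref{l1}, then the exponential decay of $\Th$ from Lemma~\ref{l2} with $f\equiv 0$ on cylinders of length $2T$, and $T\to\infty$ to force $\pa u\equiv 0$. Your fleshing-out of the final claim (that non-critical $\al$ forces $u$ into $M_0$ and reduces the equation to the geodesic equation) is slightly more explicit than the paper's proof, which leaves that step to the discussion following Definition~\ref{d:twisted-geodesic}, but it is the same argument.
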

\begin{proof}
Since $u$ is a twisted harmonic map, by the $\ep$-regularity (Lemma~\ref{l:reg2}) we have
\[ \norm{D u}_{C^0} \le C\sup_{t\in \Real}\norm{D u}_{L^2(\C_1(t))}. \]
Thus if $\ep_A'$ is small enough, then the $\norm{D u}_{C^0}$ is smaller than $\ep_A$ given in Lemma~\ref{l1}. Applying Lemma~\ref{l2} on a cylinder $\C_T=[-T,T]\times S^1$, we get
\[ |\Th(t)| \le C(\si_A)\exp(\si_A(|t|-T)) \]
for any $t\in [-T, T]$. Letting $T\to \infty$, we get $\Th(t) = 0$. It follows that $\pa  u =0$ and $u$ is a twisted geodesic which lies in the fixed point set $M^\al$.
\end{proof}

\subsection{Estimate of radial energy}

Now we are going to estimate the energy along the $t$ direction. Since we already obtain an estimate of the angular energy by Corollary~\ref{c1}, we only have to compute the difference of radical and angular energies. The idea comes from an observation in \cite{Z}.

Define a function along the cylinder by
\begin{equation}\label{e4}
e(t) := \int_{\{t\}\times S^1}(|u_t|^2 - |\pa  u|^2) d\th.
\end{equation}
Note that $e$ is gauge invariant, thus is well-defined. The next lemma shows that if there is no energy concentration, then $e(t)$ is
almost a constant.

\begin{lem}\label{l4}
Suppose $u$ is a solution to equation (\ref{e3}) and $e$ is the function defined by (\ref{e4}). Then
\begin{equation}\label{e8}
|e(t)-e(0)| \le 2\norm{Du}_{C^0(\C_T)}\cdot\norm{f}_{L^1(\C_t)}.
\end{equation}
\end{lem}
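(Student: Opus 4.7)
The plan is to differentiate $e(t)$ in $t$, substitute the PDE, and bound the resulting expression by $\norm{f}_{L^1}$.

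First, I would compute $\dfrac{d}{dt} e(t)$ directly:
\[ \dt{e(t)} = 2\int_{S^1}(u_t, u_{tt})d\th - 2\int_{S^1}(\pa u, \pa u_t)d\th. \]
To eliminate $\pa u_t$, I would integrate by parts in the angular direction. Here the key technical point is that $\pa = \p_\th + \X$, and since $\X = \iota(\al)$ is skew-symmetric (by Theorem~\ref{t:MS} and the fact that $A$ is compatible with the metric), the operator $\pa$ is formally anti-self-adjoint on $L^2(S^1, \Real^K)$. Thus
\[ \int_{S^1}(\pa u, \pa u_t)d\th = -\int_{S^1}(\pa^2 u, u_t)d\th, \]
yielding
\[ \dt{e(t)} = 2\int_{S^1}(u_t, u_{tt} + \pa^2 u)d\th. \]

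Next I would substitute the reduced Euler-Lagrange equation~(\ref{e3}), which gives $u_{tt} + \pa^2 u = f - \Ga(u)(Du, Du)$, so
\[ \dt{e(t)} = 2\int_{S^1}(u_t, f)d\th - 2\int_{S^1}(u_t, \Ga(u)(Du, Du))d\th. \]
The second term vanishes because $u_t \in u^*TM$ is tangential while the second fundamental form $\Ga(u)(Du,Du)$ is normal to $M$ in $\Real^K$; hence the inner product is zero pointwise. This leaves simply $\dt{e(t)} = 2\int_{S^1}(u_t, f)d\th$.

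Finally, I would integrate from $0$ to $t$ and apply the Cauchy-Schwarz / pointwise bound:
\[ |e(t) - e(0)| \le 2\int_{\C_t}|u_t||f| \le 2\norm{u_t}_{C^0(\C_t)}\norm{f}_{L^1(\C_t)} \le 2\norm{Du}_{C^0(\C_T)}\norm{f}_{L^1(\C_t)}, \]
which is exactly~(\ref{e8}). The only real subtlety is handling the twisted derivative $\pa$ correctly in the integration by parts and noting that the curvature/second-fundamental-form term decouples thanks to tangency of $u_t$; both amount to carefully invoking the skew-symmetry of $\X$ and the normality of $\Ga$.
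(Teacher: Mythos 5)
Your proposal is correct and follows exactly the paper's argument: differentiate $e(t)$, integrate by parts in $\th$ using the anti-self-adjointness of $\pa$, substitute equation~(\ref{e3}), drop the second-fundamental-form term by tangency of $u_t$, and integrate in $t$. You merely make explicit two steps (the skew-symmetry of $\X$ behind the integration by parts, and the normality of $\Ga$) that the paper leaves implicit.
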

\begin{proof}
A simple calculation yields
\begin{align*}
  \frac 12 \dt{}e(t) &= \int_{S^1}(u_t, u_{tt}) - \int_{S^1}(\pa  u, \pa  u_t)\\
  &= \int_{S^1}(u_t, u_{tt} + \pa ^2u)\\
  &= \int_{S^1}(u_t, -\Ga(u)(Du, Du) + f)\\
  &= \int_{S^1}(u_t, f).
\end{align*}
It follows
\begin{equation*}
\begin{aligned}
|e(t) - e(0)| &= \int_0^t e'(t) dt= 2\int_0^t\int_{S^1}(u_t, f)d\th dt\\
&\le 2\norm{Du}_{C^0(\C_T)}\cdot\norm{f}_{L^1(\C_t)}.
\end{aligned}
\end{equation*}
\end{proof}

Integrating (\ref{e8}) on the interval $[-T, T]$, we obtain the following estimate of the energy on the $t$-direction.

\begin{cor}\label{c2}
Under the assumptions of Lemma~\ref{l4}, we have
\begin{equation}\label{e33}
\Big|\int_{-T}^T e(t)dt - 2e(0)T\Big| \le C\norm{Du}_{C^0(\C_T)}\int_{-T}^T\norm{f}_{L^1(\C_t)}dt.
\end{equation}
\end{cor}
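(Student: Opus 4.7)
The corollary is a direct integration of the pointwise estimate in Lemma~\ref{l4}, so the plan is just to execute that integration carefully and apply the triangle inequality. First I would write
\[
\int_{-T}^{T} e(t)\,dt - 2e(0) T \;=\; \int_{-T}^{T} \bigl(e(t) - e(0)\bigr)\,dt,
\]
using that $\int_{-T}^{T} e(0)\,dt = 2T \, e(0)$ since $e(0)$ is a constant in $t$.

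Next, I would take absolute values, pull them inside the integral via the triangle inequality, and apply the pointwise estimate (\ref{e8}) from Lemma~\ref{l4}:
\[
\Big|\int_{-T}^{T} \bigl(e(t)-e(0)\bigr)\,dt\Big|
\le \int_{-T}^{T}|e(t)-e(0)|\,dt
\le 2 \norm{Du}_{C^0(\C_T)} \int_{-T}^{T}\norm{f}_{L^1(\C_t)}\,dt.
\]
Absorbing the constant $2$ into $C$ gives the claimed bound (\ref{e33}).

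There is no real obstacle here: the entire content is Lemma~\ref{l4}, and the $C^0$-bound on $Du$ over the full cylinder $\C_T$ lets us pull the supremum out of the integral uniformly in $t$. The only mild subtlety to note is that the bound in Lemma~\ref{l4} was stated for $t \ge 0$ in the proof (integrating from $0$ to $t$); for $t<0$ one simply integrates from $t$ to $0$ and obtains the same bound with $\norm{f}_{L^1(\C_{|t|})}$, which is still controlled by $\norm{f}_{L^1(\C_t)}$ under our convention $\C_t = [-|t|,|t|] \times S^1$. This symmetry justifies the single integral on the right-hand side of (\ref{e33}), and completes the proof.
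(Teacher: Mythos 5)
Your proof is correct and is exactly what the paper does: the corollary is obtained simply by "integrating (\ref{e8}) on $[-T,T]$", i.e.\ writing the left-hand side as $\int_{-T}^{T}(e(t)-e(0))\,dt$ and applying the pointwise bound of Lemma~\ref{l4} under the integral. Your remark about handling $t<0$ via the convention $\C_t=[-|t|,|t|]\times S^1$ is a reasonable reading of the paper's (implicit) convention and does not change the argument.
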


Note that the energy of $u$ on the cylinder can be expressed as
\[ \norm{Du}_{L^2(\C_T)}^2 = 2\int_{-T}^T \Th(t)dt + \int_{-T}^Te(t)dt. \]
Combining Corollary~\ref{c1} and Corollary~\ref{c2}, we obtain the following estimate of the energy.
\begin{lem}\label{c:energy-estimate}
Suppose $u$ is a solution to equation~(\ref{e:section}) on $\C_T$ and $\ep_A$ is given by Lemma~\ref{l1}. If
\begin{equation*}
\norm{Du}_{C^0(\C_T)} \le \ep_A,
\end{equation*}
then there exists a constant $C(\si_A)$ depending on $\si_A$, such that
\begin{equation*}
|\norm{Du}_{L^2(\C_T)}^2 - 2e(0)\cdot T| \le C(\si_A)\left(\norm{Du}_{C^0(\C_T)}(1+\int_{-T}^T\norm{f}_{L^1(\C_t)}dt)
+\norm{f}_{L^\infty(\C_T)}^2\right).
\end{equation*}
\end{lem}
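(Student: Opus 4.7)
The plan is to observe that the $L^2$-energy of $Du$ on the cylinder decomposes cleanly into an angular part governed by $\Th(t)$ and a ``$t$-versus-$\theta$'' part governed by $e(t)$, so that the desired estimate reduces to the two bounds already obtained in Corollaries~\ref{c1} and~\ref{c2}. No new analytic input is needed; the step is a bookkeeping combination.

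More concretely, I would first write, using the flat metric $dt^2+d\theta^2$ on $\C_T$ and the splitting $D=\n_t\,dt+\pa\,d\th$ of the covariant derivative in the balanced temporal gauge,
\begin{equation*}
|Du|^2 = |u_t|^2 + |\pa u|^2 = 2|\pa u|^2 + \bigl(|u_t|^2 - |\pa u|^2\bigr).
\end{equation*}
Integrating over $\{t\}\times S^1$ and then over $t\in[-T,T]$, the definitions (\ref{e5}) of $\Th$ and (\ref{e4}) of $e$ give the exact identity
\begin{equation*}
\norm{Du}_{L^2(\C_T)}^2 \;=\; 2\int_{-T}^{T}\Th(t)\,dt \;+\; \int_{-T}^{T} e(t)\,dt.
\end{equation*}
Subtracting $2e(0)T$ and applying the triangle inequality then yields
\begin{equation*}
\bigl|\norm{Du}_{L^2(\C_T)}^2 - 2e(0)T\bigr| \;\le\; 2\int_{-T}^{T}\Th(t)\,dt \;+\; \Bigl|\int_{-T}^{T} e(t)\,dt - 2e(0)T\Bigr|,
\end{equation*}
where the absolute value on the first integral is unnecessary since $\Th\ge 0$.

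At this point I invoke the two earlier corollaries. The hypothesis $\norm{Du}_{C^0(\C_T)}\le\ep_A$ is exactly the smallness assumption of Lemma~\ref{l1}, so Corollary~\ref{c1} controls the first term by $C(\si_A)\bigl(\norm{Du}_{C^0(\C_T)} + \norm{f}^2_{L^\infty(\C_T)}\bigr)$, and Corollary~\ref{c2} controls the second by $C\norm{Du}_{C^0(\C_T)}\int_{-T}^{T}\norm{f}_{L^1(\C_t)}\,dt$. Summing these two bounds and combining the $\norm{Du}_{C^0(\C_T)}$ terms produces precisely the estimate claimed in Lemma~\ref{c:energy-estimate}. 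Since both inputs have already been proved, there is no genuine obstacle; the only thing to be careful about is that the constants absorbed into $C(\si_A)$ depend on the Poincar\'e constant $C_A=1/\si_A^2$ coming through Corollary~\ref{c1}, which is why the final constant must be allowed to depend on $\si_A$ rather than be universal.
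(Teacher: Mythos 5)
Your proof is correct and is essentially identical to the paper's argument: the paper also writes $\norm{Du}_{L^2(\C_T)}^2 = 2\int_{-T}^{T}\Th(t)\,dt + \int_{-T}^{T}e(t)\,dt$ and then combines Corollary~\ref{c1} with Corollary~\ref{c2}. Nothing is missing.
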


\section{Convergence of YMH fields away from nodes}\label{s:away}

\subsection{Convergence away from nodes}

Now we return to the main problem of this paper: the convergence of YMH fields over Riemann surfaces where the metric (or conformal structure) may degenerate.

First recall that if the metric is fixed, then the convergence of YMH fields (taking bubbles into account) is very similar to the bubble convergence of harmonic maps from surfaces to compact manifolds(Theorem~\ref{t:fixed-metric}). Because dimension 2 is sub-critical for the Yang-Mills functional, the appearance of the connection dose not make any contribution during the blow-up analysis. The proof of the bubble convergence is essentially based on the $\ep$-regularity (Lemma~\ref{l:reg1}) and removable singularity(Lemma~\ref{l:sing}).

When the metric is degenerating, we divide the convergence into two parts: away from the nodes and near the nodes. Since the metric converges away from the nodes. It is easy to see that the convergence in a compact set away from the nodes is analogous to the case of fixed metric. Since the proof is standard, here we only give an outline. For more details, we refer to our previous paper~\cite{S}. Note that one only needs to set $\al=1$ in \cite{S} to obtain Theorem~\ref{t:fixed-metric}.

Concretely, suppose $(\Si_n, h_n)$ is a sequence of Riemann surfaces which converges to a nodal surface $(\Si, h, \z)$, where $h_n$ is the canonical metric chosen in Section~\ref{s:metric}. Suppose $G$ is a compact connected Lie group and $(M, \om)$ is a compact symplectic manifold which supports a Hamiltonian action of $G$. Let $P(\Si_n)$ be the principal $G$-bundle on $\Si_n$ and $\F(\Si_n)=P(\Si_n)\times_G M$ be the associated fiber bundle. Denote by $\A(\Si_n)$ the set of smooth connections on $P(\Si_n)$ and $\S(\Si_n)$ the set of smooth sections of $\F(\Si_n)$. Suppose $\{(A_n, \phi_n)\}_{n=1}^\infty\subset
\A(\Si_n)\times\S(\Si_n)$ is a sequence of YMH fields with bounded YMH energy.

Since $h_n$ converges to $h_0$ smoothly in any compact subset $\Si'\in \Si \setminus \z$, the constant $\ep_0$ given by Lemma~\ref{l:reg1} can be chosen uniformly on $\Si'$. Let $\ep<\ep_0$ be a positive constant which we will determine later. Then the bound of the YMH energy implies that there exist at most finitely many points where the energy $\E(A_n, \phi_n)$ concentrates. Namely, there exists at most finitely many points $\x = \{x_1, x_2, \cdots, x_k\}\subset \Si'$ and geodesic balls $U(x_i, r_n)$ with radius $r_n\to 0$ such that
\[ \lim_{n\to \infty}\int_{U(x_i, r_n)}|D_n\phi_n|^2 \ge \ep. \]
Thus for any $x\in \Si'\setminus \x$, there exists a neighborhood such that Lemma~\ref{l:reg1} applies. Consequently, we can find a subsequence of $(A_n, \phi_n)$ (still denoted by $(A_n, \phi_n)$) which converges smoothly to a YMH filed $(A_\infty, \phi_\infty)$ up to gauge on $\Si'\setminus \x$. In particular, $A_\infty$ is well-defined on $\Si'$ and the convergence of $A_n$ extends over the points $\x$ (in $W^{2,p}, 1<p<2$). Therefore, by the removable singularity(Lemma~\ref{l:sing}), the limit section $\phi_\infty$ can by extended over the singular points $\x$. By taking an exhaustion, it is easy to see that the limit YMH field $(A_\infty, \phi_\infty)$ is actually defined over $\Si\setminus \z$. Moreover, the restriction of bundles $P(\Si_n)$ and $\F(\Si_n)$ on $\Si'$ gives rise to limit bundles $P(\Si\setminus \z)$ and $\F(\Si\setminus \z)$ respectively. On the other hand, one can perform a blow-up procedure(cf. Section 5 of \cite{S}) on the singular points $\x$ where the energy concentrates. Note that the bundles can always be trivialized locally. Hence near the singular points, the sections $\phi_n$ can be regarded as a sequence of maps $u_n$ into $M$. After rescaling, the connection vanishes and the rescaled maps converges to a harmonic map on $\Real^2$, which can be extended to a harmonic
sphere(bubble). Furthermore, from the neck analysis of harmonic maps(cf. \cite{P}), it can be showed that the images of the bubbles and the limit map $\phi_\infty$ are connected and the energy identity holds true. To summarize, we have the following bubble convergence theorem away from nodes.

\begin{thm}\label{t:away-from-node1}
  Suppose $\{(A_n, \phi_n)\}_{n=1}^\infty\subset \A(\Si_n)\times\S(\Si_n)$ is a sequence of YMH fields with bounded YMH energy.
  Then there is a limit fiber bundle over $\Si\setminus \z$ and a YMH field $(A_\infty, \phi_\infty)\in \A(\Si\setminus \z)\times
  \S(\Si\setminus \z)$ such that for any compact subset $\Si'\subset \Si\setminus \z$ the following hold:
  \begin{enumerate}
    \item There exist finitely many points $\mathbf{x} = \{x_1, x_2, \cdots, x_k\}\subset \Si'$ such that $(A_n, \phi_n)$
        sub-converges, up to gauge, to $(A_\infty, \phi_\infty)$ in $C^\infty_{loc}$ on $\Si'\setminus \x$.
    \item There exist finitely many harmonic spheres $w_{ij}: S^2 \to M$ where $1\le i\le k$ and $0\le j \le l_k$ such that
      \begin{equation}
          \lim_{n\to \infty} \YMH_{h_n}(A_n, \phi_n)|_{\Si'} = \YMH_h(A_\infty, \phi_\infty)|_{\Si'} + \sum_{i,j}\E(w_{ij}),
      \end{equation}
      where $\E(w_{ij}) = \norm{dw_{ij}}_{L^2}$ is the energy of $w_{ij}$. Moreover, the images of the tree bubbles $w_{ij}$ and
      the limit section $\phi_\infty$ are connected.
  \end{enumerate}
\end{thm}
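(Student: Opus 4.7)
The plan is to adapt the standard Sacks--Uhlenbeck/Parker bubble-tree scheme, exploiting the fact that on any compact $\Si'\subset \Si\setminus\z$ the metrics $h_n$ converge smoothly to $h$, so the analysis reduces essentially to the fixed-metric situation of Theorem~\ref{t:fixed-metric}. First I would fix $\ep<\ep_0$ with $\ep_0$ from Lemma~\ref{l:reg1} and define the blow-up set
\[ \x = \bigl\{x\in \Si' : \liminf_{r\to 0}\limsup_{n\to\infty}\norm{D_n\phi_n}_{L^2(U(x,r))}^2 \ge \ep\bigr\}. \]
The uniform YMH energy bound forces $\x$ to be a finite collection $\{x_1,\dots,x_k\}$ by a standard covering argument. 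Around any point in $\Si'\setminus\x$, both $\norm{D_n\phi_n}_{L^2}$ and $\norm{F_n}_{L^2}$ are small on a small geodesic disk (the latter via conformal rescaling in the sub-critical dimension $2$). Uhlenbeck's theorem then supplies local Coulomb gauges, and Lemma~\ref{l:reg1} yields uniform $W^{k,2}$ bounds on $(s_n^*A_n, s_n^*\phi_n)$. Patching of local gauges together with a diagonal subsequence extraction produces a smooth YMH limit $(A_\infty,\phi_\infty)$ on $\Si'\setminus\x$ in $C^\infty_{loc}$ up to gauge. Exhausting $\Si\setminus\z$ and diagonalizing again gives the limit bundles $P(\Si\setminus\z)$, $\F(\Si\setminus\z)$ and a global limit pair. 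Since (\ref{equ:reg0}) controls $A_n$ in $W^{2,p}$ without any smallness of $\norm{D_n\phi_n}$, $A_\infty$ extends continuously across $\x$, and Lemma~\ref{l:sing} then extends $\phi_\infty$ smoothly across $\x$, yielding a smooth YMH field on $\Si\setminus\z$.

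Next I would carry out the blow-up analysis at each $x_i\in\x$. Choosing local isothermal coordinates centered at $x_i$ and scales $r_n^i\to 0$ so that a fixed fraction of $\ep$ sits in the disk of radius $r_n^i$, the rescaling $y=r_n^i\tilde y$ sends the metric to Euclidean and gives $\norm{\tilde F_n}_{L^2}=O(r_n^i)$. In a local Coulomb gauge the rescaled connections converge to the trivial connection on $\Real^2$, and the rescaled section (now a map $\tilde u_n:\D\to M$) satisfies an asymptotically harmonic-map equation. Lemma~\ref{l:reg1} produces a smooth limit away from a further singular set, and Lemma~\ref{l:sing} extends it to a harmonic sphere $w_{i0}:S^2\to M$. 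Iterating on any secondary energy concentration (at a smaller scale or at distance $O(r_n^i)$ from $x_i$) yields in the standard way a finite bubble tree $\{w_{ij}\}_{1\le i\le k,\, 0\le j\le l_k}$.

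The main obstacle is the energy identity together with the no-neck (connectedness of images) statement, which amount to showing that each annular neck between two consecutive scales carries vanishing energy in the limit and has image shrinking to a point. A conformal change sends each such neck to a long cylinder $[-T_n,T_n]\times S^1$ on which the rescaled connection has small curvature; after choosing the balanced temporal gauge of Lemma~\ref{l:gauge}, the connection takes the form $a_n d\th$ close to a flat $\al d\th$ with $\al\to 0$ (the bubble being a genuine harmonic map forces the limiting holonomy to be trivial). The Poincar\'e inequality of Lemma~\ref{l:poincare}, the exponential decay of angular energy in Lemma~\ref{l2}, and the radial--angular energy comparison of Lemma~\ref{c:energy-estimate} then combine to show, under $\norm{D_n u_n}_{C^0}\le\ep_A$ on the neck (which holds after one separates the scales sufficiently so that $\ep$ is as small as needed), that the angular energy decays exponentially and $e(t)$ is negligible in the limit; hence the neck contributes no energy and its image shrinks to a single point, giving connectedness. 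The hard part will be executing this neck analysis uniformly along the whole bubble tree while keeping the Poincar\'e constants bounded; here this is secured by the fact that the limiting connections at interior blow-up scales are trivial and thus non-critical in the sense of Definition~\ref{d:degenerate1}, so Corollary~\ref{c:uniform} supplies the required uniform Poincar\'e constants.
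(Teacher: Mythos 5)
Your overall scheme coincides with the paper's: the paper only sketches this proof, noting that on $\Si'$ the metrics converge smoothly so the argument of the fixed-metric case (Theorem~\ref{t:fixed-metric}, i.e.\ the $\al=1$ case of \cite{S}) applies verbatim --- finite concentration set by a covering argument, local Coulomb gauges and Lemma~\ref{l:reg1} off the concentration set, extension of $A_\infty$ across $\x$ by the unconditional $W^{2,p}$ bound and of $\phi_\infty$ by Lemma~\ref{l:sing}, rescaling at each $x_i$ to produce harmonic spheres, and then ``the neck analysis of harmonic maps (cf.\ \cite{P})'' for the energy identity and connectedness. Everything up to and including the bubble-tree construction in your proposal matches this.

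The one genuine flaw is your last sentence, where you secure uniform Poincar\'e constants on the interior necks by asserting that the limiting connection there is trivial ``and thus non-critical in the sense of Definition~\ref{d:degenerate1}, so Corollary~\ref{c:uniform} supplies the required uniform Poincar\'e constants.'' This is backwards with respect to the paper's definitions: for $\al=0$ one has $\exp(2\pi\al)=id$, whose fixed-point set is $M^0=M$, which strictly contains $M_0$ unless $G$ acts trivially; so the trivial connection is \emph{critical}, not non-critical, and Corollary~\ref{c:uniform} does not apply. Likewise, a sequence $\al_n\to 0$ with $\al_n\ne 0$ is typically \emph{degenerating} in the sense of Definition~\ref{d:degenerate}, since a nonzero skew-symmetric $\X_n$ does not annihilate all constants, and the first positive eigenvalue of $-\p_{\th,\al_n}^2$ then tends to $0$. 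The correct route --- and the one implicit in the paper's reference to \cite{P} --- is not to use the twisted Poincar\'e inequality at all on interior necks: because $A_\infty$ is a genuine smooth connection on the whole disk $B(x_i,r)$ (curvature cannot concentrate in the sub-critical dimension $2$), in polar coordinates $|A_{n,\th}|\le C\rho$, so after the conformal change to the cylinder all connection terms (and the potential term) are exponentially decaying in $t$ and can be absorbed into the error $f_n$. One then runs the decay argument of Lemmas~\ref{l1}--\ref{l2} and Lemma~\ref{c:energy-estimate} for the plain operator $\p_\th$, whose Poincar\'e constant is the classical one $\si=1$, uniformly in $n$. With that substitution your neck argument closes; as written, the step you lean on is false.
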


Therefore, to prove Theorem~\ref{t:main1}, it remains to investigate the asymptotic behavior of the limit YMH field $(A_\infty,
\phi_\infty)$ at the nodes.

\subsection{Limit holonomy on punctured disk}

In general, the limit fiber bundle and the pair $(A_\infty, \phi_\infty)$ obtained in Theorem~\ref{t:away-from-node1} can not be
extended over the nodes to the whole surface $\Si$. The obstruction can be seen as follows.

Let $\D$ be the unit flat disk centered at $0$ with radius $1$ and $\D^* = \D\setminus\{0\}$ the punctured disk. Suppose $P$ is a
principal $G$-bundle on the punctured disk $\D^*$, $A$ is a connection and $F$ its curvature. Since the punctured disk $\D^*$ is not
simply connected, the bundle $P$ might be non-trivial in general. In fact, the bundle can only be extended to the origin if the holonomy
of the connection around the origin is trivial. Nevertheless, the holonomy do has a limit if $F$ belongs to $L^p$ for some $p>1$.

More precisely, under the polar coordinates $(r, \th)$ of $\D$, let $l_\th := \{(r, \th)| 0<r<1\}$ be the line of angle $\th$ and $c_r =
\{x\in \D^*||x| = r\}$ be the circle with radius $r>0$. Chose an orthogonal normal frame $\{e_i(r, \th)\}$ along the line $l_0$ and extend
them by parallel translation around the circle $c_r$ for every $0<r<1$. Then $D_{\th, A} e_i = 0$ and holonomy appears. Suppose
$e_i(r, 2\pi) = e_i(r, 0)\cdot g(r)$ for some $g(r) \in G$, then we define the holonomy on $c_r$ by
\[ \Hol(A, r) = [g(r)], \]
where $[g(r)]$ denotes the conjugacy class of $g(r)$. In \cite{S}, the author proved the following theorem.
\begin{thm}\label{t:holonomy}
If $\norm{F}_{L^p} \le C$ for some $p>1$, then their exists $g_0 \in G$ such that
\[ \Hol(A, 0) := \lim_{r\to 0} \Hol(A, r) = [g_0]. \]
\end{thm}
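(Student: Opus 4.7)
The plan is to trivialize the bundle on $\D^*$, put the connection in radial gauge, and then show that the holonomy $g(r)$ along $c_r$ satisfies a first-order ODE in $r$ whose right-hand side is controlled by the pointwise curvature; integrating this ODE and using H\"older with the $L^p$ bound on $F$ shows that $g(r)$ is Cauchy as $r\to 0$, hence the conjugacy class $\Hol(A,r)$ has a limit.

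First, since $G$ is connected, principal $G$-bundles on $\D^*\simeq S^1$ are trivial, so I can pick a global trivialization of $P$ over $\D^*$ and then further conjugate by a gauge transformation solving $D_rs=0$ along each radial ray $l_\th$ starting from $c_{r_0}$ (any fixed circle). This puts the connection in the \emph{radial gauge} with $A_r\equiv 0$ and $A=A_\th(r,\th)\, d\th$. In this trivialization the parallel transport $U(r,\th)$ around $c_r$ from $(r,0)$ to $(r,\th)$ solves
\begin{equation*}
  \p_\th U + A_\th U = 0, \qquad U(r,0)=I,
\end{equation*}
and the basepoint holonomy is $g(r)=U(r,2\pi)$. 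A key observation is that in radial gauge $F = F_{r\th}\,dr\wedge d\th$ with $F_{r\th}=\p_r A_\th$.

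Next I would differentiate the ODE in $r$: letting $V=\p_r U$, variation of parameters gives
\begin{equation*}
  V(r,\th) = -\,U(r,\th)\int_0^{\th} U(r,\th')^{-1}F_{r\th}(r,\th')\,U(r,\th')\, d\th',
\end{equation*}
so at $\th=2\pi$,
\begin{equation*}
  \dif{r} g(r) = -\, g(r)\int_0^{2\pi} U(r,\th)^{-1}F_{r\th}(r,\th)U(r,\th)\, d\th.
\end{equation*}
Since $G$ is compact, $U$ acts by isometries on $\g$, and therefore
\begin{equation*}
  \Abs{\dif{r} g(r)} \le \int_0^{2\pi}|F_{r\th}(r,\th)|\,d\th.
\end{equation*}
Comparing two radii $0<r_1<r_2$, setting $h(r)=g(r_1)^{-1}g(r)$ which satisfies $h(r_1)=I$ and the same kind of ODE, gives
\begin{equation*}
  |h(r_2)-I| \le \int_{r_1}^{r_2}\!\!\int_0^{2\pi}|F_{r\th}(r,\th)|\, d\th\, dr = \int_{\{r_1\le |x|\le r_2\}}|F|_h\, dV,
\end{equation*}
where I used that in polar coordinates $|F_{r\th}|\, d\th\, dr = |F|_h\, dV$.

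Finally, by H\"older's inequality with exponent $p>1$,
\begin{equation*}
  \int_{\{r_1\le|x|\le r_2\}}|F|_h\, dV \le \norm{F}_{L^p(\D)}\cdot\bigl(\pi(r_2^2-r_1^2)\bigr)^{1-1/p}\longrightarrow 0 \quad\text{as } r_1,r_2\to 0.
\end{equation*}
Hence $\{g(r)\}$ is Cauchy in the compact Lie group $G$, so converges to some $g_0\in G$. Passing to conjugacy classes (which also removes the residual freedom in the choice of initial trivialization on $l_0$) gives $\Hol(A,r)=[g(r)]\to [g_0]$, which is the assertion. The main technical point to be careful about is step three: establishing the non-abelian Stokes-type formula $dg/dr=-g(r)\int U^{-1}F_{r\th}U\,d\th$ and the resulting pointwise bound, which depends on having $A_r=0$ and on the fact that parallel transport preserves the invariant norm on $\g$; everything else is routine H\"older estimation and the classification of bundles over $S^1$.
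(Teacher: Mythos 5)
Your proof is correct: the radial gauge, the holonomy-variation formula $\tfrac{d}{dr}g(r) = -g(r)\int_0^{2\pi}U^{-1}F_{r\th}U\,d\th$, the pointwise identity $|F_{r\th}|\,dr\,d\th = |F|_h\,dV$ in polar coordinates, and the H\"older estimate on shrinking annuli together establish that $g(r)$ is Cauchy as $r\to 0$, and passing to conjugacy classes removes the residual gauge freedom. The paper itself does not prove this theorem but defers to \cite{S} (in the spirit of the holonomy analysis of \cite{SS}), where essentially this same standard argument is carried out, so your route matches the intended one.
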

\begin{rem}
It's obvious that if $g_0 = id$, then the bundle is trivial and can be extended to the whole disk $\D$.
\end{rem}

Now suppose $z\in \z$ is a node and $z_1, z_2$ are the preimages of $z$. We may take two punctured disks at $z_1$ and $z_2$.
Since the curvature of $A_\infty$ is $L^2$ bounded, it follows from Theorem~\ref{t:holonomy} that there exist a limit holonomy at
$z_1$ and $z_2$ respectively. It is easy to see that these two limit holonomies coincide, since they both equal to the limit of the
holonomy of $A_n$ around the curve in $\Si_n$ which shrinks to the node $z$. Obviously, $A_\infty$ can be extended over $z$ if and
only if the holonomy is trivial. In this case, the limit bundles $P(\Si\setminus \z)$ and $\F(\Si\setminus \z)$ can be extended over $z$
and the singularity of $\phi_\infty$ at $z$ can be removed by Lemma~\ref{l:sing}. Thus, we obtain the following removable singularity
on the nodes.

\begin{thm}
  Let $(A_\infty, \phi_\infty)$ be the limit YMH field on $\Si\setminus \z$ given by Theorem~\ref{t:away-from-node1}, then the limit
  holonomy $\Hol(A_\infty, z)$ on each node $z\in \z$ exists. In particular, if $\Hol(A_\infty, z) = id$, then the singularity at $z$ can be
  removed.
\end{thm}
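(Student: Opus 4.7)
The plan is to establish the existence of the limit holonomy by directly invoking Theorem~\ref{t:holonomy}, then to bootstrap the triviality condition through Lemma~\ref{l:sing} to obtain the smooth extension.

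For the existence, I would fix a node $z \in \mathbf{z}$ with preimages $z_1, z_2$ in the normalization $\tilde{\Si}$, and choose small punctured geodesic disks $\D^*(z_i) \subset \Si \setminus \mathbf{z}$. Since the YMH energies $\YMH_{h_n}(A_n, \phi_n)$ are uniformly bounded, the curvatures satisfy $\norm{F_{A_n}}_{L^2(\Si_n)} \le C$, and the $C^\infty_{loc}$ convergence of $A_n$ to $A_\infty$ away from the nodes combined with lower semi-continuity gives $\norm{F_{A_\infty}}_{L^2(\D^*(z_i))} \le C$. Theorem~\ref{t:holonomy} applied with $p = 2$ then yields the existence of $\Hol(A_\infty, z_i) = [g_i]$ for $i=1,2$ on each side of the node.

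To see that $[g_1] = [g_2]$, observe that for every sufficiently small $r > 0$ and every sufficiently large $n$, the two circles $c_r(z_i) \subset \D^*(z_i)$ can both be realized, via the gluing construction of Section~\ref{s:metric}, as loops in the collar $\C_n \subset \Si_n$ that are freely homotopic to each other in $\C_n$. Hence $\Hol(A_n, c_r(z_1)) = \Hol(A_n, c_r(z_2))$ as conjugacy classes in $G$. The $C^\infty_{loc}$ convergence of $A_n$ on each of $\D^*(z_i)$ then shows that taking $n \to \infty$ followed by $r \to 0$ produces the same conjugacy class on both sides. This is what we define as $\Hol(A_\infty, z)$.

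For the removability statement, suppose $\Hol(A_\infty, z) = \mathrm{id}$. By the remark following Theorem~\ref{t:holonomy}, the principal bundle $P(\Si \setminus \mathbf{z})$ then extends smoothly over $z$ to a bundle on a full disk $\D$ centered at $z$. On a sufficiently small such disk, $\norm{F_{A_\infty}}_{L^2(\D)}$ can be made arbitrarily small, so Uhlenbeck's theorem~\cite{Uh} provides a Coulomb gauge in which the estimate~(\ref{equ:reg0}) of Lemma~\ref{l:reg1} gives $A_\infty \in W^{2,p}(\D')$ for any $1 < p < 2$, and in particular $A_\infty$ is continuous on $\D'$. The section $\phi_\infty$ is a $W^{2,2}_{loc}$ YMH field on the punctured disk $\D' \setminus \{z\}$ with bounded YMH energy, so Lemma~\ref{l:sing} applies and $\phi_\infty$ extends to a $W^{2,2}(\D')$ section. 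A standard bootstrap using the coupled elliptic system~(\ref{e:el-loc}) then upgrades the pair $(A_\infty, \phi_\infty)$ to a smooth YMH field on $\D'$.

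The main obstacle is the second step: ensuring that triviality of the limit holonomy really yields a continuous extension of $A_\infty$ across the node. This requires the combination of the bundle extension (guaranteed abstractly by the remark after Theorem~\ref{t:holonomy}) with a gauge choice in which the connection 1-form itself is regular near $z$; the latter rests on applying Uhlenbeck's small-curvature theorem on a small enough disk, which is legitimate because the $L^2$ mass of $F_{A_\infty}$ concentrates arbitrarily little as the disk shrinks.
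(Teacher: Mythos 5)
Your proposal follows essentially the same route as the paper: existence of the limit holonomy on each side via Theorem~\ref{t:holonomy} applied to the $L^2$-bounded curvature of $A_\infty$, agreement of the two sides by comparing with the holonomy of $A_n$ around the curves degenerating to the node, and removability via the bundle extension plus Lemma~\ref{l:sing} (your extra detail on obtaining a continuous gauge for $A_\infty$ through Uhlenbeck's theorem before invoking Lemma~\ref{l:sing} is a welcome addition, since that lemma requires a continuous connection).

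One step is stated incorrectly, though it is repairable with material already in the paper. You assert that because $c_r(z_1)$ and $c_r(z_2)$ are freely homotopic in the collar $\C_n$, the holonomies $\Hol(A_n, c_r(z_1))$ and $\Hol(A_n, c_r(z_2))$ coincide as conjugacy classes. Holonomy is not a homotopy invariant for a non-flat connection: the two holonomies differ by a factor controlled by the curvature flux through the annulus swept out by the homotopy. The correct argument is that this flux tends to zero, since by (\ref{e:rescale}) (or Lemma~\ref{l3} and Lemma~\ref{l5}) the curvature of $A_n$ on the collar is exponentially small, $\norm{F_n}_{L^1(\C_n)}\le C\de_n\to 0$; equivalently, in the balanced temporal gauge $a_n$ is exponentially close to the constant $\al_n$, so the holonomy around every circle $\{t\}\times S^1$ converges to $[\exp(2\pi\al_\infty)]$. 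With that correction the two limit holonomies coincide, and the rest of your argument goes through.
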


\subsection{Limit pair at nodes}

Let $(A_\infty, \phi_\infty)$ be the limit pair obtained in Theorem~\ref{t:away-from-node1}, which is a well-defined YMH field on the
surface $\Si\setminus \z$ except the nodal points $\z$. Now we are going to investigate the asymptotic behavior of $\phi_\infty$ at the
nodes.

Let $z\in \z$ be a node and $\pi:\t{\Si}\to \Si$ be the normalization map. We choose one of the preimages of $z$, say $z_1\in \t{\Si}$.
Recall that the metric we choose is flat near $z$. Thus we may find a small $r_0>0$ such that the neighborhood $U(z_1, r) =  \{x\in
\t{\Si}| |x-z_1| \le r_0\}$ is a flat disk. Since there are only finitely many points, where the energy concentrates, we may assume that there is no energy concentrating points in $U(z_1, r)\setminus\{z_1\}$. The punctured disk $U(z_1, r)\setminus\{z_1\}$ is isomorphic to a half cylinder $\C_+ = [0, \infty)\times S^1$ endowed with metric $ds^2 = \ld^2(dt^2+d\th^2)$, where $\ld$ decays exponentially as $t$ goes to infinity. Thus we may regard the restriction of fiber bundles $P(\Si\setminus \z)$ and $\F(\Si\setminus \z)$ on $U(z_1, r)\setminus\{z_1\}$ as fiber bundles over the cylinder $\C_+$. For any $t>0$, denote the restriction of $P$ and $F$ on the circle $\{t\}\times S^1$ by $P(t)$ and $\F(t)$ respectively. Using parallel transportation, we may naturally identify the bundles $P(t)$ and $\F(t)$ with $P(0)$ and $\F(0)$ for all $t$. Also, if we denote by $(A(t), \phi(t))$ the restriction of the pair $(A_\infty, \phi_\infty)$ on $\{t\}\times S^1$, we may regard $(A(t), \phi(t))$ as a pair on $P(0)\times \F(0)$. Moreover, since the Lie group $G$ is connected and the cylinder $\C_+$ is homotopic to $S^1$, the fiber bundles can be trivialized and we identify the section $\phi(t)$ with a map $u(t):S^1\to M$. Then we have

\begin{thm}\label{t:limit-pair-at-node}
Let $(A(t), u(t))$ be the pair defined as above, then $(A(t),u(t))$ converges a pair $(A_z, u_z)$ in $C^0(S^1)$ as $t\to
\infty$. Moreover, they satisfies
  \[D_{A_\infty}u_\infty = 0. \]
\end{thm}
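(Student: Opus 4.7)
The plan is to reduce, via conformal change, to a configuration on the flat half-cylinder $\C_+ = [0,\infty) \times S^1$ satisfying the rescaled Euler-Lagrange system~(\ref{e02}), in which the conformal factor $\ld = \ld(t)$ decays exponentially as $t \to \infty$. The source terms $\ld\n\H(u)$ and $\ld u^* Du$ are then exponentially small in $t$, and the curvature satisfies $\norm{F_\infty}_{L^2([t_0,\infty)\times S^1)} = O(e^{-t_0})$. Since there is no energy concentration in $U(z_1, r)\setminus\{z_1\}$, Lemma~\ref{l:reg2} yields a uniform $C^0$ bound on $Du$ past some large $t_0$, with $\norm{Du}_{C^0(\{t\}\times S^1)} \to 0$ as $t \to \infty$.

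First I would place $A_\infty$ in a balanced temporal gauge via Lemma~\ref{l:gauge}, so that $A_\infty = a(t,\th) d\th$ on $\C_+$. Applying Lemma~\ref{l5} on exhausting sub-cylinders $[t_0, T]\times S^1$ and letting $T \to \infty$, and using the exponential decay of $F_\infty$, I would obtain a pointwise bound of the form $|a_t(t,\th)| + |a_\th(t,\th)| \le Ce^{-t}$. Integrating in $t$ produces a constant $\al_z \in \g$ such that $a(t,\cdot) \to \al_z$ in $C^0(S^1)$, hence $A(t) \to A_z := \al_z d\th$ in $C^0(S^1)$.

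Next, for the section, I would invoke Lemmas~\ref{l1} and~\ref{l2} on $[t_0, T]\times S^1$; letting $T\to \infty$ (the right-boundary contribution is absorbed using that the energy on $\{t\ge T\}\times S^1$ tends to zero) this yields exponential decay of the angular energy $\Th(t) \le C e^{-\si_{A_z} t}$. For the radial part, Lemma~\ref{l4} shows that $e(t)$ has only small variation since $\norm{f}_{L^1([t_0,\infty)\times S^1)} < \infty$; combined with $\int_0^\infty \int_{S^1}|u_t|^2 \le \E(A_\infty, \phi_\infty) < \infty$ and the bounded time-derivative of the function $t \mapsto \int_{S^1}|u_t|^2 d\th$ (read off from equation~(\ref{e7-3}) together with the uniform bounds on $|Du|$ and $|u_{tt}|$), a standard argument forces $\int_{S^1}|u_t|^2 d\th \to 0$, hence $e(t)\to 0$. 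Via $\epsilon$-regularity this upgrades to $\norm{u_t}_{C^0(\{t\}\times S^1)} \to 0$.

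Finally, I would extract a limit $u_z := \lim_{t \to \infty} u(t,\cdot)$ and pass to the limit in $\Th(t) = \int_{S^1}|\p_{\th, a(t)}u|^2 d\th \to 0$, combined with $a(t) \to \al_z$, to conclude $\p_{\th, \al_z} u_z = 0$, i.e.\ $D_{A_z} u_z = 0$ on $S^1$. The hard part will be promoting the qualitative decay of $u_t$ to genuine (not merely subsequential) $C^0$ convergence of $u(t, \cdot)$: a priori $u(t,\cdot)$ could drift along the orbit of $\exp(2\pi\al_z)$ in the fixed point set. To rule this out I would linearize the reduced equation~(\ref{e7-3}) around a candidate limit in $\ker(\p_{\th, \al_z})$ and exploit the spectral gap of the self-adjoint operator $-\p_{\th, \al_z}^2$ on $S^1$ together with the exponential decay in $t$ of the $\ld$-driven perturbation, so as to obtain an integrable-in-$t$ pointwise decay rate for $|u_t|$. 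This pins down a unique limit $u_z$ and yields the claimed $C^0(S^1)$ convergence.
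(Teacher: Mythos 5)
Your proposal follows the paper's route for everything except the last step: conformal change to the flat half-cylinder, balanced temporal gauge plus Lemma~\ref{l5} for the convergence $A(t)\to\al_z\,d\th$, and Lemmas~\ref{l1}--\ref{l2} for the exponential decay of the angular energy $\Th(t)$ are exactly what the paper does (the paper phrases the section estimate as ``an argument similar to the proof of Theorem~\ref{t:gap}'', which is the same application of Lemma~\ref{l2} with the right-boundary term killed by the vanishing of the energy density at infinity). Where you diverge is the radial part. You obtain only the qualitative statement $\int_{S^1}|u_t|^2\,d\th\to 0$ and then, correctly worried that $u(t,\cdot)$ could still drift along an orbit without an integrable rate, propose a linearization/spectral-gap argument to pin down a unique limit. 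That detour is unnecessary, and it is the harder (Simon-type uniqueness) way to do something that is already quantitative in the lemmas you cite: Lemma~\ref{l4} gives $|e(t)-e(t')|\le 2\norm{Du}_{C^0}\norm{f}_{L^1([t',t]\times S^1)}\le Ce^{-t'}$, so $e(t)$ converges; finiteness of the total energy $\int_0^\infty\bigl(e(t)+2\Th(t)\bigr)dt$ together with $\int_0^\infty\Th<\infty$ forces the limit to be zero; and then the same variation bound applied on $[t,\infty)$ yields $|e(t)|\le Ce^{-t}$, hence $\int_{S^1}|u_t|^2\,d\th=e(t)+\Th(t)\le Ce^{-ct}$. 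The $\ep$-regularity of Lemma~\ref{l:reg2} upgrades this to a pointwise bound $|u_t|\le Ce^{-ct/2}$, which is integrable in $t$, so $u(t,\cdot)$ is Cauchy in $C^0(S^1)$ with no possibility of orbit drift, and passing to the limit in $\Th(t)\to 0$ gives $\p_{\th,\al_z}u_z=0$. In short: your argument is correct and essentially the paper's, but the final ``hard part'' you flag is already resolved by the exponential rate implicit in Lemma~\ref{l4}, so the linearization step should be replaced by this direct computation.
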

\begin{proof}
By changing the metric on $\C_+$ to the standard metric $ds^2_0 = dt^2+d\th^2$, it is obvious that the $L^2$ norm of curvature on $[t, +\infty]\times S^1$ decays exponentially as $t\to \infty$. In view of Lemma~\ref{l5}, it's easy to see that $A(t)$ converges, modula gauge, to a flat connection $A_\infty = \al d\th$ defined over $S^1$.

On the other hand, there exist a constant $\ep>0$ such that
\[ \norm{D_{A_\infty}u_\infty}_{L^2(\C_1(t))} < \ep \]
for all $t$. Otherwise we get a contradiction to the assumption that there are no energy concentrating points on $\C_+$. In particular, we may set $\ep< \ep'_{A_\infty}$, where $\ep'_{A_\infty}$ is given by Theorem~\ref{t:gap}. Then by an argument similar to the proof of Theorem~\ref{t:gap}, we get the exponential decay
\begin{equation}\label{1}
  \int_{S^1} |D_{A_\infty} u(t)|^2 d\th \le C(\si_{A_\infty})\exp\si_{A_\infty}(T-t).
\end{equation}
Hence the convergence of $u(t)$ to a map $u_\infty:S^1\to M$ in $C^0$ follows. Moreover, by letting $t\to \infty$ in (\ref{1}), we find that the limit pair satisfies the equation
\[ D_{A_\infty}u_\infty = 0. \]
\end{proof}


\section{Convergence near nodes}\label{s:near}

\subsection{From collar to cylinder}\label{s:reduced-problem}

In this section, we focus on the blow-up analysis near the nodes, which is the most interesting part. Again we suppose $(\Si_n, h_n, j_n)$ is a sequence of Riemann surfaces which converges to a nodal surface $(\Si, h, j)$ with nadal set $\z$, where $h_n$ is the canonical metric chosen in Section~\ref{s:metric} which is flat near the nodes. Then for any node $z\in \z$, there is a collar area in $\Si_n$ which is isomorphic to a long cylinder $\C_n = [-T_n, T_n]\times S^1$, endowed with a metric $g_n = \ld_n^2(dt^2+d\th^2)$. Moreover, $\ld_n$ is exponentially bounded by
\begin{equation}\label{e:exp-decay}
  |\ld_n(t)| \le C\de_n\exp(|t| - T_n), \quad \forall t\in [-T_n, T_n],
\end{equation}
where $\de_n = e^{-T_n}\to 0$ as $n\to \infty$. After restricting the bundles and YMH fields to the collar area, we obtain a sequence of
bundles $P(\C_n)$ and $\F(\C_n)$ on cylinders $\C_n$, as well as a sequence of YMH fields $\{(A_n,
\phi_n)\}_{n=1}^\infty\subset\A(\C_n)\times\S(\C_n)$ with bounded YMH energy with respect to the metric $g_n$. Namely, $(A_n, \phi_n)$ satisfies the Euler-Lagrangian equation~(\ref{e:el1}) and there exists a constant $C$
such that
\begin{equation}\label{e000}
  \YMH_{g_n}(A_n, \phi_n) =\norm{D_n \phi_n}_{L^2,g_n}^2 + \norm{F_n}_{L^2,g_n}^2 + \norm{\mu(\phi_n) - c}_{L^2,g_n}^2
  \le C.
\end{equation}
Note that by assumption the bundles on the cylinder are trivial and we always identify a section $\phi_n$ over $\C_n$ with a map
$u_n:\C_n \to M$.

To study the convergence of $\{(A_n, u_n)\}_{n=1}^\infty$ on the cylinder, we need to restate the problem under the
standard flat metric $g_0 = dt^2 + d\th^2$. First note that $g_n=\ld_n^2g_0$ is conformal to $g_0$. By the conformal property of the
YMH functional~(\ref{e:conformal}), the energy bound~(\ref{e000}) implies that for any sub-cylinder $\C_t := [-t, t]\times S^1$
\begin{equation*}
  \norm{D_n u_n}^2_{L^2(\C_t),g_0} + (\sup_{t\in \C_t}\ld)^{-2}\norm{F_n}^2_{L^2(\C_t),g_0} \le  C.
\end{equation*}
It follows from (\ref{e:exp-decay}) that for the conformal metric $g_0$, we have
\begin{equation}\label{e:rescale}
\norm{D_{n} u_n}_{L^2, g_0} \le C,  \ \  \norm{F_{n}}_{L^2(\C_t),g_0} \le C\de_n\exp(|t| - T_n).
\end{equation}
Moreover, after the conformal transformation, the Euler-Lagrangian equation~(\ref{e:el1}) becomes
\begin{equation}\label{e:el2}
  \left\{
  \begin{aligned}
  D^*_n D_n u_n &= -\ld_n\nabla\H(u_n),\\
  D^*_n F_n &= -\ld_n u_n^* D_n u_n,
  \end{aligned}
  \right.
\end{equation}
where $D_n^*$ denotes the dual of $D_n$ under the metric $g_0$. Then we are concerned with the convergence behavior of the
sequence $(A_n, u_n)$ over the standard cylinder $(\C_n, g_0)$ which satisfies (\ref{e:rescale}) and (\ref{e:el2}).

\subsection{Energy concentrations on cylinder}\label{s2}

By the $\ep$-regularity (Lemma~\ref{l:reg1}), it is easy to see that the connection $A_n$ converges locally in $C^0$ to a limit connection $A_\infty$ which
is defined over the infinitely long cylinder $\C_\infty := \Real^1\times S^1$. Moreover, by (\ref{e:rescale}), we have
\[ \lim_{n\to \infty} \norm{F_{n}}_{L^2(\C_t),g_0} = 0 \]
for any sub-cylinder $\C_t$ with fixed length. It follows that the limit connection $A_\infty$ is flat. On the other hand, to apply the $\ep$-regularity on the map $u_n$, we need the
assumption of smallness of the energy. That is, if $\norm{D_{n} u_n}_{L^2(\D_x)}<\ep_0$ on a disk $\D_x$ centered at $x$, then
$u_n$ converges strongly near $x$. However, because $\norm{D_{n} u_n}_{L^2}$ is conformally invariant,  the energy may
concentrate and the blow-up phenomenon happens. We distinguish two cases, where two kinds of bubbles emerge.

The first case is energy concentration near a point. Namely, there exists $x_n \in \C_n$, $\de_n>0$ and a positive constant $\ep>0$ such that $x_n \to x_0$,
$\de_n\to 0$ and
\[ \lim_{n\to \infty} \norm{D_n u_n}_{L^2(B_{x_n}(\de_n))} \ge \ep \]
where $B_{x_n}(\de_n)$ is a disk of radius $\de_n$ centered at $x_n$. In this case, by a standard rescaling procedure and the removable singularity theorem (Lemma~\ref{l:sing}), the
connection vanishes and one obtains finitely many bubbles which are just harmonic spheres as in Theorem~\ref{t:fixed-metric}. More precisely, we define the rescaled pairs
\[ \tilde{A}_n(y) = A(x_n + k_n y), ~~ \tilde{u}_n(y) = u_n(x_n + k_n y), y\in B_0(R)\]
where $\lim_{n\to \infty}k_n = 0$ and $B_0(R)$ is a disk centered at the origin on $\Real^2$ with radius $R$. Then the curvature $F_{\tilde{A}_n}$ vanishes as $n\to \infty$ and the connection $\tilde{A}_n$ converges to a trivial connection. Moreover, $\tilde{u}_n$ converges to a harmonic map defined on $B_0(R)$. By letting $R$ go to infinity, we obtain a harmonic map on $\Real^2$, which can be extended to a bubble, i.e. a harmonic map from $S^2$. Since the energy of harmonic spheres has a lower bound, there are at most finitely many such bubbles(\cite{SU}). In particular, by choosing $k_n$ carefully, we will get a set of bubbles and the image of these bubbles are connected(see \cite{P} for a detailed construction of the bubble tree). We call this kind of bubbles the \emph{tree bubbles}. (See \cite{S} for more details.)

The second case is energy concentration on a "drifting" sub-cylinder. Namely, there exists $t_n\in [-T_n, T_n]$ such that
\begin{equation}\label{e:tbubble}
  \lim_{n\to \infty} \norm{D_n u_n}_{L^2(\C_1(t_n))} \ge \ep,
\end{equation}
where $\C_1(t_n) = [t_n - 1, t_n + 1] \subset \C_n$ is a unit length sub-cylinder centered at $t_n$. Here we assume that there is no
energy concentrations near a point as in the first case, i.e. we have
\[ \lim_{\de\to 0}\lim_{n\to \infty} \norm{D_n u_n}_{L^2(B_{x}(\de))} = 0, ~~\forall x \in \C. \]
In this case, we first shift the origin by $t_n$ and consider the map
\[  \tilde{u}_n(t,\th) := u_n(t+t_n, \th). \]
By possibly extending the cylinder $\C_n$ to a longer one, which corresponds to a slightly larger collar area in the Riemann surface, we may regard $\tilde{u}_n$
as a map defined on $[-T_n', T_n']\times S^1$ for some $T_n'\to \infty$. Then it follows by the $\ep$-regularity that $(u_n, A_n)$
converges strongly in $W^{1,2}$ to a pair $(u_\infty, A_\infty)$ on the infinite cylinder $\C_\infty =
\Real^1\times S^1$ which satisfies
\begin{equation}\label{e:twisted-bubble}
D_{A_\infty}^*D_{A_\infty} u_\infty = 0.
\end{equation}
That is, $u_\infty$ is a twisted harmonic map with respect to the flat connection $A_\infty$. We call such a limit pair a \emph{twisted bubble}(or \emph{connecting bubble}).

Note that if the connection $A_\infty$ is trivial, the twisted harmonic map is indeed a classical harmonic map $u_\infty$ defined on
$\Real^1\times S^1$ and again can be extended to a harmonic sphere defined on $S^2$. But in general, the limit connection $A_\infty$,
although flat, may have a non-trivial holonomy on the cylinder. As shown before, in contrary to the tree bubbles, the twisted bubbles do
have point singularities which can not be extended over. Another difference from the tree bubbles is that the energy of a twisted bubble
can be arbitrarily small as the limiting connection $A_\infty$ varies. Fortunately, for a fixed connection, we do have the following gap theorem.
\begin{thm}\label{t:gap}
   There exist a constant $\ep'_{A_\infty}$ depending on $A_\infty$ such that any twisted bubble $u_\infty$ with energy smaller than
   $\ep'_{A_\infty}$ is trivial, i.e. a constant map.
\end{thm}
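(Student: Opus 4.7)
\medskip

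\noindent\textbf{Proof proposal for Theorem~\ref{t:gap}.} The plan is to reduce to Theorem~\ref{t:energy-lower-bound} and then exploit the finiteness of the total energy on an infinitely long cylinder. Fix the limit flat connection $A_\infty = \al_\infty d\th$ in a balanced temporal gauge, and choose
\[
\ep'_{A_\infty} := \min\{\ep_0^2,\ (\ep'_{A_\infty})^2\ \text{of Thm.~\ref{t:energy-lower-bound}}\},
\]
where $\ep_0$ is the $\ep$-regularity constant of Lemma~\ref{l:reg2}. If the twisted bubble $u_\infty$ satisfies $\norm{D_{A_\infty}u_\infty}_{L^2(\C_\infty)}^2 < \ep'_{A_\infty}$, then in particular $\sup_{t\in\Real}\norm{D_{A_\infty}u_\infty}_{L^2(\C_1(t))}$ is smaller than the threshold of both Lemma~\ref{l:reg2} (with $f=0$) and Theorem~\ref{t:energy-lower-bound}.

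First, I would invoke Theorem~\ref{t:energy-lower-bound} directly: since $u_\infty$ solves $D_{A_\infty}^*D_{A_\infty} u_\infty = 0$ with uniformly small local energy, it is a twisted geodesic, i.e.\ $\p_{\th,\al_\infty} u_\infty = 0$. Consequently the full covariant derivative reduces to $D_{A_\infty} u_\infty = (\p_t u_\infty)\, dt$, and the equation $D^*_{A_\infty}D_{A_\infty}u_\infty = 0$ collapses to the ordinary geodesic equation
\[
\p_t^2 u_\infty + \Ga(u_\infty)(\p_t u_\infty, \p_t u_\infty) = 0
\]
along each $\th$-slice. Standard ODE theory then gives the constant-speed property: for each fixed $\th\in S^1$, the quantity $|\p_t u_\infty(t,\th)|$ is independent of $t$.

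Second, I would use the constant-speed property against the hypothesis of finite total energy on the infinite cylinder. Indeed,
\[
\norm{D_{A_\infty} u_\infty}_{L^2(\C_\infty)}^2 = \int_{-\infty}^{\infty}\!\!\int_{S^1} |\p_t u_\infty(t,\th)|^2\, d\th\, dt,
\]
and since the integrand is independent of $t$, finiteness of this integral forces $\p_t u_\infty(\cdot,\th)\equiv 0$ for a.e.\ $\th$. Combined with $\p_{\th,\al_\infty} u_\infty = 0$, this gives $D_{A_\infty} u_\infty \equiv 0$, so the bubble is covariantly constant; passing to the trivialization induced by parallel transport of $A_\infty$, $u_\infty$ reduces to a single point in the fiber. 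In particular the energy threshold forbids a non-trivial twisted bubble, which contradicts the bubble condition (\ref{e:tbubble}) if $\ep'_{A_\infty} < \ep^2$.

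The argument is essentially a corollary of Theorem~\ref{t:energy-lower-bound}, so no serious obstacle is anticipated; the only subtle point is to verify that the finite-energy hypothesis, together with the constant-speed property of the geodesic on the infinite line, genuinely forces $\p_t u_\infty = 0$ rather than merely a decaying profile, and to confirm that the threshold $\ep'_{A_\infty}$ can be chosen strictly smaller than the bubble-detection constant $\ep$ in (\ref{e:tbubble}) so that the dichotomy ``small energy $\Rightarrow$ trivial'' is consistent with the blow-up setup.
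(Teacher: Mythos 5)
Your proposal is correct and follows essentially the same route as the paper: invoke Theorem~\ref{t:energy-lower-bound} to conclude that the small-energy twisted bubble is a twisted geodesic, use the geodesic equation to get constant speed $|\p_t u_\infty|$ in $t$, and then observe that finite total energy on the infinite cylinder forces $\p_t u_\infty\equiv 0$, hence $D_{A_\infty}u_\infty\equiv 0$. The paper phrases this last step as a contradiction with the bubble-detection lower bound (\ref{e:tbubble}) rather than concluding covariant constancy directly, but the two formulations are equivalent.
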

\begin{proof}
Since $A_\infty$ is flat, by Theorem~\ref{t:energy-lower-bound}, there exists $\ep'_{A_\infty}>0$ depending on $A_\infty$, such that
any twisted harmonic map on $\C_\infty$ with energy smaller than $\ep'_{A_\infty}$ is a twisted geodesic. Thus the twisted bubble $u_\infty$ satisfies
$\p_{\th, A_\infty}u_\infty=0$ and the equation~(\ref{e:twisted-bubble}) reduces to
\[\p_t u_{\infty} +\Ga(u_\infty)(\p_t u_\infty, \p_t u_\infty) = 0.\]
Therefore, for any $\th \in S^1$, the curve $\ga_\th(t) : = u_\infty(t, \th)$ is a geodesic. It follows that $|\p_t \ga_\th|$ is constant and the
energy of $u_\infty$ on a sub-cylinder of unit length is constant. Namely, for any $t$ and sub-cylinder $\C_1(t)$, we have
\[ \norm{D_\infty u_\infty}_{L^2(\C_1(t))} = \Big(\int_{t-1}^{t+1}\int_{S^1}|\p_t u_\infty|^2 d\th dt\Big)^{1/2} =
\Big(2\int_{S^1}|\p_t \ga_\th|^2d\th\Big)^{1/2}= const. \]
On the other hand, from (\ref{e:tbubble}) we have
$$  \norm{D_\infty u_\infty}_{L^2(\C_1(t))} = \lim_{n\to \infty}\norm{D_n u_n}_{L^2(\C_1(t_n))} \ge \ep, $$
where $\ep$ is a positive number. This implies that the limit map $u_\infty$ would have infinite energy on the infinite cylinder $\C_\infty$. However, this
contradicts with the assumption of boundedness of energy. This proves the theorem.
\end{proof}

An immediate application of the above theorem is the finiteness of number of bubbles.
\begin{thm}
There are at most finitely many bubbles (including tree bubbles and twisted bubbles) appearing during the blow-up process.
\end{thm}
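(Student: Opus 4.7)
The plan is to iterate the bubble-extraction procedure described just before the theorem, using that each extracted bubble carries at least a fixed positive amount of energy while the total YMH energy is bounded by some constant $C$ (by~\eqref{e000}). The standard bubble-tree argument, as in Parker~\cite{P} for harmonic maps and already adapted to YMH fields in the previous section, yields an energy-decomposition relation asserting that the energies of all successive bubbles, together with the residual limit field and neck contributions, saturate the total YMH energy along a subsequence. Hence only finitely many bubbles can be produced once a uniform positive lower bound on the energy of each bubble is established.

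For tree bubbles, which are harmonic spheres $w:S^2\to M$ obtained by the point-blow-up procedure, the classical Sacks-Uhlenbeck gap~\cite{SU} provides a uniform constant $\ep_{SU}>0$ depending only on the target data $(M,\om,J)$ with $\E(w)\ge \ep_{SU}$. For twisted bubbles, i.e.\ non-trivial twisted harmonic maps on $\Real\times S^1$ with limit flat connection $A_\infty$, Theorem~\ref{t:gap} just proved gives $\E(v)\ge \ep'_{A_\infty}>0$; the remaining task is to make this bound uniform across all twisted bubbles appearing in the tree. For this I would appeal to Lemma~\ref{l5}: in the balanced temporal gauge, $A_n$ converges on every compact sub-cylinder to a single flat limit $A_\infty = \al_\infty d\th$, and a twisted bubble extracted by translation at any concentration point $t_n$ inherits a limit connection gauge-equivalent to $A_\infty$, since the holonomy class around $S^1$ of a flat connection does not depend on which circle of the cylinder one computes it on. Moreover, any further bubble extracted at a smaller scale inside an existing bubble must have trivial curvature in its rescaled limit (sub-criticality of Yang-Mills in dimension $2$, cf.\ Lemma~\ref{l:reg1}), hence is a tree bubble again covered by $\ep_{SU}$.

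With these bounds in hand, each extracted bubble contributes at least $\ep := \min(\ep_{SU},\ep'_{A_\infty}) > 0$ to the total energy, and \eqref{e000} then caps the number of bubbles by $C/\ep$, which is finite. The main obstacle in carrying out this plan is the uniform control of the twisted-bubble gap: one must carefully track the gauge transformations used in the shift-of-origin and rescaling steps around each $t_n$, and use the exponential decay of $a_n - \al_\infty$ from Lemma~\ref{l5} to ensure that after every such shift the pulled-back connections still converge, modulo gauge, to the same $A_\infty$, so that a single constant $\ep'_{A_\infty}$ genuinely bounds every twisted bubble from below.
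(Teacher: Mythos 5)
Your proposal is correct and follows essentially the same route as the paper: a uniform Sacks--Uhlenbeck gap for tree bubbles, the gap theorem (Theorem~\ref{t:gap}) for twisted bubbles with the observation that all twisted bubbles at a given node share the same limit flat connection $A_\infty$, and the total energy bound to cap the count. The paper phrases the uniformity step slightly differently (taking the minimum of $\ep'_{A_\infty}$ over the finitely many nodes), but the substance is identical.
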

\begin{proof}
  First recall that the energy of harmonic spheres are bounded from below by a positive constant. It follows that there are at most finitely many tree bubbles. As for the twisted bubbles, note that there are at most finitely many nodes. At each node $z\in \z$ there is a unique limit flat connection $A_\infty$ and a corresponding constant $\ep_z:= \ep'_{A_\infty}$ given by Theorem~\ref{t:gap}. Thus there is a positive minimum $\ep_1=\min_{z\in\z}\ep_z$ such that any twisted bubble contains a finite amount of energy larger than $\ep_1$. Since the total YMH energy is bounded, there are at most finitely many twisted bubbles as well.
\end{proof}

Therefore, after finitely many steps of blowing-up's, we may assume that there is no energy concentration(cf. \cite{DT}). More precisely, we say that there is
\emph{no energy concentration} on the cylinder if
\begin{equation}\label{e:no-concentration}
  \lim_{n\to \infty}\sup_{t\in[-T_n, T_n]}\norm{D_n u_n}_{L^2(\C_1(t))} = 0.
\end{equation}

\subsection{Energy identity}

By the arguments in last subsection, we may assume that there is no energy concentrations on the cylinder. Therefore there is no energy on any sub-cylinder with fixed-length. However, since the length of the whole cylinder tends to infinity, the neck might still contain a positive amount of energy. An important issue then is to compute the accumulated energy on the cylinder.

To do this, we first put the connection $A_n$ in balanced temporal gauge such that $A_n = a_n d\th$ where $a_n:\C_n\to \g$ and the
restriction of $a_n$ on the middle circle $\{0\}\times S^1$ is a constant $\al_n \in \g$. By Remark~\ref{r1}, we can assume that $\al_n$ converges to some $\al_\infty \in \g$. Then we denote by $\bar{A}_n = \al_n d\th$ the corresponding flat connection and let $\pn := \p_\th + \al_n$ be the partial differential operator induced by $\bar{A}_n$. Next define
\begin{equation}\label{e46}
e_n(t) := \int_{\{t\}\times S^1}(|\p_tu_n|^2-|\pn u_n|^2)d\th,
\end{equation}
and let $e_n := e_n(0)$. Moreover, we denote
\begin{equation}\label{e45}
\mu := \lim_{n\to \infty}T_n e_n
\end{equation}
and the energy of $u_n$ on $\C_{n}$ by
$$ \E(u_n, A_n, \C_n) = \int_{\C_n}|D_n u_n|^2 dtd\th. $$
It is obvious that by choosing a sub-sequence, the limit $0\le \mu <\infty$ exists. Our main result of this section is the following generalized energy identity.

\begin{thm}[Energy identity]\label{t:energy-identity}
Suppose $\{(A_n, u_n)\}_{n=1}^\infty\subset \A(\C_n)\times\S(\C_n))$ is a sequence of YMH fields on cylinder $(\C_n, g_0)$ which
satisfies the energy bound (\ref{e:rescale}) and the rescaled EL equation~(\ref{e:el2}). Then there exists a subsequence, which we still denote by $(A_n, u_n)$, such that $A_n$ converges smoothly to a flat connection $A_\infty$ on $\C_\infty$. Moreover, if there is no energy concentration, then the limit of $u_n(\C_n)$ falls into the fixed point set of $\exp(2\pi \al_\infty)$ and the following hold.
\begin{enumerate}
\item If $A_n$ is non-degenerating, then the energy on the neck is
   \[ \lim_{n\to \infty}\E(u_n, A_n, \C_n) = 2\mu. \]
\item If $A_n$ is degenerating, then
   \begin{equation}\label{e13}
   \lim_{n\to \infty}\E(u_n, A_n, \C_n) = 2\lim_{n\to \infty}\int_{\C_n}|(\al_n-\al_\infty)u_n|^2dtd\th + 2\mu.
   \end{equation}
 \end{enumerate}
\end{thm}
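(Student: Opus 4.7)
The plan is to combine the connection estimates of Section~\ref{s:connection} with the energy split and exponential-decay machinery of Section~\ref{s:section}, applied not to $A_n$ itself (which is only approximately flat) but to the flat approximating connection $\bar{A}_n := \al_n d\th$.

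\textbf{Step 1 (Connection).} First apply Lemma~\ref{l5} and Remark~\ref{r1}: after passing to a subsequence and choosing the balanced temporal gauge, $A_n = a_n d\th$ with $a_n(0, \th) = \al_n \in \k$, $\al_n \to \al_\infty$, and
$\norm{a_n - \al_n}_{W^{1,\infty}(\C_{t-1})} \le C\de_n \exp(|t|-T_n)$. Thus $A_n$ converges in $C^\infty_{loc}$ to $A_\infty = \al_\infty d\th$, which is flat. Under the no-concentration hypothesis, $\ep$-regularity (Lemma~\ref{l:reg2}) together with exponential decay of $\la_n$ extracts a limit $u_\infty$ on $\C_\infty$ solving $D_{A_\infty}^*D_{A_\infty} u_\infty = 0$; by Theorem~\ref{t:gap} and Theorem~\ref{t:energy-lower-bound}, $u_\infty$ is a twisted geodesic, so $\p_{\th,\al_\infty} u_\infty = 0$ and its image lies in the fixed-point set of $\exp(2\pi\al_\infty)$.

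\textbf{Step 2 (Reduction and $e_n$ almost constant).} In the balanced temporal gauge the map equation reads
\[
\p_t^2 u_n + \p_{\th, \al_n}^2 u_n + \Ga(u_n)(D_n u_n, D_n u_n) = \la_n \n\H(u_n) + R_n,
\]
where $R_n := (\p_{\th, \al_n}^2 - \p_{\th, a_n}^2) u_n$ is exponentially small by Step~1. Since $\bar{A}_n$ is flat, Lemma~\ref{l4} applied to the source $f_n := -\Ga(u_n)(D_n u_n,D_n u_n) + \la_n \n \H(u_n) - R_n$ gives $|e_n(t) - e_n| \le 2\norm{D_n u_n}_{C^0}\norm{f_n}_{L^1(\C_t)}$, and the $L^1$-norms are uniformly bounded because both $\la_n$ and $|a_n-\al_n|$ are exponentially bounded. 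Hence
\[
\int_{-T_n}^{T_n} e_n(t)\, dt = 2 T_n e_n + o(1) \longrightarrow 2\mu.
\]

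\textbf{Step 3 (The two cases).} The total energy splits as
\[
\E(u_n, A_n, \C_n) = \int_{-T_n}^{T_n} e_n(t)\, dt + 2\int_{-T_n}^{T_n} \Th_n(t)\, dt + o(1),
\]
with $\Th_n(t) := \int_{S^1}|\p_{\th, \al_n} u_n|^2 d\th$ (the $o(1)$ term absorbs $|a_n-\al_n|$ contributions). In the \emph{non-degenerating} case, Lemma~\ref{l:uniform} gives a uniform bound $C_{A_n} \le C$, so Corollary~\ref{c1} applied with $\bar{A}_n$ yields $\int \Th_n dt = o(1)$, hence $\E \to 2\mu$. In the \emph{degenerating} case, decompose $\p_{\th,\al_n} u_n = \p_{\th,\al_\infty} u_n + (\al_n-\al_\infty)u_n$; the fixed-connection Poincar\'e constant $C_{A_\infty}$ is finite, so applying Lemma~\ref{l2} with respect to $A_\infty$ to $\tilde\Th_n(t) := \int_{S^1}|\p_{\th, \al_\infty} u_n|^2 d\th$ shows $\int \tilde\Th_n dt = o(1)$, giving
\[
2\int_{-T_n}^{T_n} \Th_n(t)\, dt = 2\int_{\C_n} |(\al_n - \al_\infty) u_n|^2 dt d\th + o(1),
\]
which combines with Step~2 to yield (\ref{e13}).

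\textbf{Main obstacle.} The delicate point is the degenerating case: one must legitimately replace $\p_{\th,\al_n}$ by $\p_{\th,\al_\infty}$ inside $\Th_n$ even though $C_{A_n} \to \infty$. This requires controlling the cross-term $\int (\p_{\th,\al_\infty} u_n, (\al_n-\al_\infty) u_n) d\th$ and verifying that $\p_{\th, \al_\infty} u_n \to 0$ in $L^2$ on long intervals, both of which rest on the Pohozaev-type identity behind Lemma~\ref{l4} applied now to the \emph{fixed} flat connection $A_\infty$, together with the gap theorem forcing $u_\infty$ into $\ker(\p_{\th,\al_\infty})$. The boundary terms at $\pm T_n$ must also be handled via the exponential decay estimate (Lemma~\ref{l2}) at each end separately.
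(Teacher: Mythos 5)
Your proposal is correct and follows essentially the same route as the paper: balanced temporal gauge plus Lemma~\ref{l5} for the connection, the Pohozaev-type Lemma~\ref{l4} to show $\int e_n(t)\,dt \to 2\mu$, uniform exponential decay of the angular energy via Lemma~\ref{l:uniform} in the non-degenerating case, and in the degenerating case the same decomposition $\p_{\th,\al_n}u_n = \p_{\th,\al_\infty}u_n + (\al_n-\al_\infty)u_n$ with Lemma~\ref{l2} applied relative to the fixed connection $A_\infty$. The "main obstacle" you flag is precisely the point the paper handles by rewriting the equation as $D_{A_\infty}^*D_{A_\infty}u_n = \tilde f_n$ with $|\tilde f_n|\le C\de_n$, so no gap remains.
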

\begin{proof}
First observe that there is no energy on the unit-length sub-cylinders $[-T_n, -T_n +1]\times S^1$ and $[T_n-1, T_n]\times S^1$ at the ends of the cylinder $\C_n$. Thus for convenience, we may set $T_n = T_n-1$ and $\C_n = [-T_n+1, T_n-1]\times S^1$.

For any $\ep>0$ which is smaller than the $\ep$'s appeared before, the assumption of no energy concentration (\ref{e:no-concentration}) and the $\ep$-regularity implies that
\begin{equation}\label{e40}
  \norm{D_n u_n}_{C^0(\C_n)} <\ep
\end{equation}
for sufficiently large $n$.

By Lemma~\ref{l:gauge}, we can put the connection $A_n$ in balanced temporal gauge such that $A_n = a_n d\th$ where $a_n(0, \th) = \al_n \in \g$ for all $\th \in S^1$. The equation for $A_n$ in~(\ref{e:el2}) can be written as
\begin{equation}\label{e41}
 D_n^*F_n = \ld_n B_n
\end{equation}
where $B_n = u_n^*D_nu_n$ and $\ld_n$ is exponentially bounded by (\ref{e:exp-decay}). By (\ref{e40}), we have
$\norm{B_n}_{C^0(\C_n)} < \ep$. Thus equation (\ref{e41}) has the form of (\ref{e7-1}) and satisfies the hypothesis
of Lemma~\ref{l5}. If follows that
\begin{equation}\label{e42}
  \norm{a_n-\al_n}_{W^{1,\infty}(\C_{t})} \le C\de_n\exp(|t| - T_n), \forall t\in [-T_n, T_n]\times S^1.
\end{equation}
Recall that we assume that $\al_n$ converges to some $\al_\infty \in \g$ by Remark~\ref{r1}. Then (\ref{e42}) implies that the connection
$A_n$ converges to a flat connection $A_\infty = \al_\infty d\th$ along the cylinder.

Combining (\ref{e40}) and (\ref{e42}), it is clear that in any fixed sub-cylinder of finite length, the map $u_n$ converges strongly to a map $u_\infty$ which satisfies
\[ D_{A_\infty}u_\infty = \lim_{n\to \infty}D_n u_n=0. \]
It follows that
$$\p_{\th, \al_\infty}u_\infty = \p_\th u_\infty + \al_\infty\cdot u_\infty = 0, $$
which implies that $u_\infty$ lies in the fixed point set $M^{\al_\infty}$. On the other hand, we also have $\p_t u_\infty=0$. Thus $u_\infty$ is independent on $t$ and the image of $u_\infty$ on a fixed sub-cylinder is just a single closed orbit. However, since the length of the cylinder $\C_n$ tends to infinity, the limit of the images $u_n(\C_n)$ does not necessarily shrink to an orbit.

In the balanced temporal gauge, the equation for $u_n$ in (\ref{e:el2}) is
\begin{equation*}
  \tau(u_n) + \p_\th a_n\cdot u_n + 2a_n\cdot \p_\th u_n + a_n^2\cdot u_n = \ld_n \nabla \H(u_n).
\end{equation*}
Using the flat connection $\bar{A}_n  = \al_n d\th$, we may rewrite the equation as
\begin{equation*}
\tau(u_n) + 2\al_n\cdot \p_\th u_n + \al_n^2\cdot u_n  = f_n,
\end{equation*}
or equivalently,
\begin{equation}\label{e12}
\bd_n^*\bd_n u_n = f_n
\end{equation}
where $\bd_n := d +\bar{A}_n$ and
\[ f_n =  \p_\th a_n\cdot u_n - 2a_n\cdot \p_\th u_n - a_n^2\cdot u_n + 2\al_n\cdot \p_\th u_n + \al_n^2\cdot u_n + \ld_n \nabla
\H(u_n). \]
In view of (\ref{e40}) and estimate of the connection (\ref{e42}), it is obvious that $f_n$ is $L^\infty$-exponentially bounded by
\begin{equation}\label{e10}
  \norm{f_n}_{L^\infty(\C_t)} \le C\de_n\exp(|t| - T_n).
\end{equation}
It follows
\[ \int_{-T_n}^{T_n}\norm{f_n}_{L^1(\C_t)}dt \le C\de_n\int_{-T_n}^{T_n}\exp(|t| -T_n)dt \le C\de_n. \]

Now we distinguish two cases.

\emph{Case 1. $A_n$ is non-degenerating.}

In this case, the equation of $u_n$ has the form of (\ref{e:section}) and by Lemma~\ref{l2}, we have exponential decay of the angular energy
\begin{equation*}
\begin{aligned}
\Th_n(t)&:= \int_{S^1\times\{t\}}|\p_{\th, \al_n}u_n|^2d\th \\
&\le e^{\si_n(|t|-T_n)}\left(\Th(T_n) + \Th(-T_n)+ \frac8{\si_n^2}\norm{f}_{L^\infty}^2 + Ce^{-\si_nT_n}\right)\\
&\le \frac{C}{\si_n^2}e^{\si_n(|t|-T_n)}(\ep + \de_n).
\end{aligned}
\end{equation*}
Here $\si_n =(1/C_{\bar{A}_n})^{1/2}$ corresponds to the Poincar\'e constant of the flat connection $\bar{A}_n$ given by Lemma~\ref{l:poincare}. Since $A_\infty$ is non-degenerate, by Lemma~\ref{l:uniform}, $\si_n$ is bounded away from zero uniformly. Therefore, we have uniform exponential decay of $\Th_n(t)$. Moreover, all the hypothesis of Lemma~\ref{c:energy-estimate} are satisfied,
which yields a uniform energy estimate
\begin{equation}\label{e44}
|\norm{\bd_nu_n}_{L^2(\C_n)}^2 - 2e_nT_n| \le C(\ep + \de_n),
\end{equation}
where $e_n$ is defined by (\ref{e46}). Note again that since the connection $A_\infty$ is non-degenerate, the constant $C$ in (\ref{e44}) can be chosen uniformly.

Consequently, by letting $n\to \infty$ in (\ref{e44}), we get
\begin{equation*}
\lim_{n\to\infty}|\norm{\bd_nu_n}_{L^2(\C_n)}^2 - 2\mu| \le C\ep.
\end{equation*}
Since $\ep$ can be taken arbitrarily small, it follows
\begin{equation*}
\lim_{n\to\infty}\norm{\bd_nu_n}_{L^2(\C_n)}^2 = 2\mu.
\end{equation*}
Finally, recall that the connection $D_n$ is close to $\bd_n$ by (\ref{e42}), we conclude that
\begin{equation*}
\lim_{n\to\infty}\norm{D_nu_n}_{L^2(\C_n)}^2 =\lim_{n\to\infty}\norm{\bd_nu_n}_{L^2(\C_n)}^2 = 2\mu.
\end{equation*}

\emph{Case 2. $A_n$ is degenerating.}

In this case, we no longer have uniform exponential decay for the angular energy $\Th_n(t)$. However, observe that by (\ref{e42}) and the convergence of $\al_n\to\al_\infty$, we have for sufficiently large $n$
\begin{equation}\label{e43}
\norm{a_n-\al_\infty}_{W^{1,\infty}} \le \norm{a_n-\al_n}_{W^{1,\infty}}+|\al_n-\al_\infty|\le C\de_n
\end{equation}
Thus we can replace the connection $\bar{A}_n$ by the limit connection $A_\infty$ in~(\ref{e12}) and rewrite the equation of $u_n$ as
\begin{equation}\label{e47}
D_{A_\infty}^*D_{A_\infty}u_n = \tilde{f}_n,
\end{equation}
where $D_{A_\infty}=\nabla+\al_\infty d\th$ and
\[ \tilde{f}_n =  \p_\th a_n\cdot u_n - 2a_n\cdot \p_\th u_n - a_n^2\cdot u_n + 2\al_\infty\cdot \p_\th u_n + \al_\infty^2\cdot u_n + \ld_n \nabla
\H(u_n). \]
Although the function $\tilde{f}_n$ no longer decays exponentially, we still have, in view of (\ref{e43}),
\begin{equation*}
|\tilde{f}_n|\le C(|\p_\th a_n|+|a_n-\al_\infty||\p_{\th, \al_n}u_n|+|\al_\infty||\p_{\th, \al_n}u_n-\p_{\th, \al_\infty}u_n|)\le C\de_n.
\end{equation*}
Therefore, we can apply Lemma~\ref{l2} for equation~(\ref{e47}) to get
\begin{equation}\label{e63}
\tilde{\Th}_n(t) := \int_{S^1\times\{t\}}|\p_{\th,\al_\infty}u_n|^2d\th\le \frac{C}{\si_\infty^2}e^{\si_\infty(|t|-T_n)}(\ep + \de_n),
\end{equation}
where $\si_\infty:=(1/C_{A_\infty})^{1/2}$ corresponds to the Poincar\'e constant of $A_\infty$. Integrating the above inequality yields
\begin{equation}\label{e48}
\int_{-T_n}^{T_n}\tilde{\Th}_n(t)dt \le \frac{C}{\si_\infty^3}(\ep+\de_n).
\end{equation}
On the other hand, the actual angular energy $\Th_n$ can be expressed as
\begin{equation*}
\begin{aligned}
\Th_n(t) &= \int_{\{t\}\times{S^1}}|\pn u_n|^2d\th\\
& =\int_{\{t\}\times{S^1}}|\p_{\th,\al_\infty} u_n + (\al_n-\al_\infty)u_n|^2d\th\\
&=\tilde{\Th}_n(t) + \int_{\{t\}\times{S^1}}\<\p_{\th,\al_\infty} u_n, (\al_n-\al_\infty)u_n\>d\th +\int_{\{t\}\times{S^1}}|(\al_n-\al_\infty)u_n|^2d\th .
\end{aligned}
\end{equation*}
Integrating and using (\ref{e48}), we get
\begin{equation}\label{e49}
\lim_{n\to \infty}\int_{-T_n}^{T_n}\Th_n(t)dt = \lim_{n\to \infty}\int_{-T_n}^{T_n}\int_0^{2\pi}|(\al_n-\al_\infty)u_n|^2d\th dt.
\end{equation}
Moreover, Lemma~\ref{l4} and Corollary~\ref{c2} still applies, yielding
\begin{equation*}
|\int_{-T_n}^{T_n} e_n(t)dt - 2e_n T_n|
\le C\norm{D_n u_n}_{C^0(\C_{T_n})}\int_{-T_n}^{T_n}\norm{f_n}_{L^1(\C_t)}dt
\le C\ep\de_n.
\end{equation*}
It follows that
\begin{equation}\label{e50}
 \lim_{n\to\infty}|\int_{-T_n}^{T_n} e_n(t)dt| = \lim_{n\to\infty}2e_nT_n = 2\mu.
\end{equation}
Combining (\ref{e49}) and (\ref{e50}), we obtain
\begin{equation*}
\begin{aligned}
\lim_{n\to \infty}\norm{D_nu_n}_{L^2(\C_n)}^2 &= \lim_{n\to\infty}\(\int_{-T_n}^{T_n} e_n(t)dt+2\int_{-T_n}^{T_n}\Th_n(t)dt\)\\
 &= 2\mu + 2\lim_{n\to \infty}\int_{\C_n}|(\al_n-\al_\infty)u_n|^2d\th dt.
\end{aligned}
\end{equation*}
\end{proof}

As a corollary, we have
\begin{cor}
  If $A_n$ is non-degenerating, the neck contains no energy if and only if $\mu = 0$.
\end{cor}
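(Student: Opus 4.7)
The corollary is an immediate consequence of part (1) of Theorem~\ref{t:energy-identity}. The plan is simply to invoke the energy identity on the neck in the non-degenerating case and combine it with the non-negativity of the $L^2$ norm.

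More concretely, under the non-degeneracy hypothesis on $\{A_n\}$, Theorem~\ref{t:energy-identity}(1) gives
\[
\lim_{n\to \infty}\E(u_n, A_n, \C_n) = 2\mu.
\]
Since $\E(u_n, A_n, \C_n) = \int_{\C_n}|D_n u_n|^2\,dtd\th \ge 0$, taking the limit forces $\mu \ge 0$. The neck contains no energy precisely when $\lim_{n\to\infty}\E(u_n, A_n, \C_n) = 0$, and by the displayed equality this holds if and only if $2\mu = 0$, i.e.\ $\mu = 0$. There is no genuine obstacle here: the whole content of the corollary is already packaged in the non-degenerating case of the energy identity, and the only observation to add is that the left-hand side is non-negative so the equivalence between vanishing of the neck energy and vanishing of $\mu$ is trivial.
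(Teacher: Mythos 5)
Your proposal is correct and is exactly the argument the paper intends: the corollary is stated as an immediate consequence of part (1) of Theorem~\ref{t:energy-identity}, and the only additional observation needed is the non-negativity of the neck energy, which you supply.
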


\subsection{Further analysis of the neck}

\subsubsection{The non-degenerate case}\label{s3}

For a sequence of harmonic maps with bounded energy on cylinders whose length tends to infinity, Chen, Li and Wang \cite{CLW} showed
that there exists a subsequence which converges to a geodesic. The length of the limit geodesic can be zero, finite or infinite. Here we
follow \cite{CLW} closely to investigate the geometric properties of the neck. First we assume that the connection is non-degenerating. The advantage of the non-degeneracy is that the Poincar\'e constant is bounded by Lemma~\ref{l:uniform}. Hence all the constants depending on $A_n$ can be chosen uniformly.

Again we suppose there is no energy concentration on the cylinder. Also for convenience, we set $T_n = T_n-1$ and $\C_n = [-T_n+1, T_n-1]\times S^1$, since the unit-length sub-cylinders $[-T_n, -T_n +1]\times S^1$ and $[T_n-1, T_n]\times S^1$ at the ends of the cylinder $\C_n$ does not affect the results in this section. From the analysis before, we already know that $u_n(\C_n)$ converges to a closed orbit of a curve $\ga$, which we refer as the neck. We define the length of the neck to be the length of the curve $\ga$.

Let $\bar{A}_n = \al_n d\th$ be the flat connection corresponding to $A_n$ and $\bd_n = \nabla+\bar{A}_n$, $\pn = \p_\th + \al_n$ be the derivatives induced by $\bar{A}_n$. Let $e_n$ be defined by (\ref{e46}) and
\[ \nu := \lim_{n\to \infty} T_n\sqrt{e_n}. \]
Note that $\nu$ could be $0$, finite or infinite.

\begin{lem}\label{l7}
If $\nu=0$, then the length of the neck is zero, i.e. the neck is a single closed orbit of a point.
\end{lem}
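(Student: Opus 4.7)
The plan is to show that when $\nu = 0$, all the circular slices $u_n(t, \cdot): S^1 \to M$ converge to a common closed orbit as $n \to \infty$, uniformly in $t \in [-T_n, T_n]$. This forces the family of orbits traced out by $u_n(\C_n)$ to collapse to a single orbit, so the limit curve $\gamma$ reduces to a point and hence has length zero.

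First, I would carry over the uniform estimates already established in Case 1 of the proof of Theorem~\ref{t:energy-identity}. Since there is no energy concentration, the $\ep$-regularity (Lemma~\ref{l:reg2}) gives $\norm{D_n u_n}_{C^0(\C_n)} \le \ep$ uniformly. The non-degeneracy of $A_n$ ensures, via Lemma~\ref{l:uniform}, that the Poincar\'e constants $\si_n$ are uniformly bounded from below by some $\si_\infty > 0$. Consequently Lemma~\ref{l2} yields the uniform exponential decay
\[ \Th_n(t) := \int_{\{t\} \times S^1} |\pn u_n|^2 \, d\th \le \frac{C}{\si_\infty^2} e^{\si_\infty(|t| - T_n)}(\ep + \de_n), \]
whose integral over $[-T_n, T_n]$ is $O(\ep + \de_n)$. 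Combined with Lemma~\ref{l4} and Corollary~\ref{c2}, which give $|e_n(t) - e_n| \le C\ep\de_n$ uniformly in $t$, we obtain
\[ \int_{-T_n}^{T_n} \int_{S^1} |\p_t u_n|^2 \, d\th \, dt = \int_{-T_n}^{T_n}\bigl(e_n(t) + \Th_n(t)\bigr)\, dt \le 2T_n e_n + C(\ep + \de_n). \]

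Next I would control the $L^2$-displacement between different circular slices on the cylinder. Applying Cauchy--Schwarz in the $\tau$-variable, for any $s, t \in [-T_n, T_n]$,
\[ \int_{S^1} |u_n(t, \th) - u_n(s, \th)|^2 \, d\th \le |t - s| \int_{-T_n}^{T_n} \int_{S^1} |\p_\tau u_n|^2 \, d\th \, d\tau \le 2T_n\bigl(2T_n e_n + C(\ep + \de_n)\bigr). \]
Under the hypothesis $\nu = \lim_{n\to\infty} T_n \sqrt{e_n} = 0$, the leading term $4T_n^2 e_n = 4\nu^2 + o(1)$ vanishes, and the error terms $C T_n(\ep + \de_n)$ can be made arbitrarily small by first taking $\ep \to 0$ (permissible since $\ep$ is a free parameter in the $\ep$-regularity) and then $n \to \infty$ (using that $T_n \de_n \to 0$ from the exponential decay of the conformal factor). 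Hence $u_n(t, \cdot) - u_n(s, \cdot) \to 0$ in $L^2(S^1)$ uniformly in $s, t$; the uniform $C^0$-bound on $D_n u_n$ then upgrades this to uniform convergence, so all the closed loops $\{u_n(t, \cdot)\}_t$ approach a common limit loop in $M$.

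Therefore $u_n(\C_n)$ shrinks to a single closed orbit in $M^{\al_\infty}$, and the neck curve $\gamma$ parametrizing the family of limit orbits reduces to a point, i.e.\ has length zero. The main obstacle in the argument is the careful bookkeeping of the small error terms, in particular ensuring that the constants from the exponential decay of $\Th_n$ and from Lemma~\ref{l4} are uniform in $n$; this is precisely where the non-degeneracy of $A_n$, through the uniform lower bound on the Poincar\'e constants $\si_n$, is indispensable, since without it Lemma~\ref{l2} and Lemma~\ref{c:energy-estimate} would produce $n$-dependent constants that could blow up.
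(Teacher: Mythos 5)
Your overall strategy (show that all circular slices collapse to a common orbit) is reasonable, but the key quantitative step fails. In your Cauchy--Schwarz displacement estimate you bound
\[ \int_{S^1}|u_n(t,\th)-u_n(s,\th)|^2\,d\th \le |t-s|\int_{-T_n}^{T_n}\int_{S^1}|\p_\tau u_n|^2\,d\th\,d\tau \le 2T_n\bigl(2T_ne_n + C(\ep+\de_n)\bigr), \]
and the error term $C\,T_n\,\ep$ does \emph{not} tend to zero. The order of limits you invoke is not available: $\ep$ is a threshold such that $\norm{D_nu_n}_{C^0(\C_n)}\le\ep$ holds only for $n\ge N(\ep)$, so for any fixed $\ep>0$ one has $\limsup_{n\to\infty} T_n\ep=\infty$, and taking $\ep\to 0$ afterwards does not help. (Even using that no energy concentration gives $\norm{D_nu_n}_{C^0}\le\ep_n\to 0$, nothing guarantees $T_n\ep_n\to 0$.) The loss occurs because Cauchy--Schwarz over an interval of length $\sim T_n$ multiplies the \emph{integrated} error $C(\ep+\de_n)$ by $T_n$, destroying the gain coming from the exponential localization of the angular energy.

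The paper avoids this by working at the level of a pointwise $L^\infty$ bound rather than an integrated $L^2$ bound: combining the $\ep$-regularity with the exponential decay $\norm{\pn u_n}^2_{L^2(\C_1(t))}\le C(\ep+\de_n)e^{\si(|t|-T_n)}$ and the near-constancy of $e_n(t)$ from Lemma~\ref{l4}, one gets
\[ |\p_t u_n(t,\th)| \le |\bd_n u_n(t,\th)| \le C\Bigl(\sqrt{2e_n} + (\ep+\de_n)^{\frac12}\exp\tfrac{\si}{2}(|t|-T_n)\Bigr), \]
and then integrates this \emph{in $t$ only}, along the curve $\ga_n(\cdot)=u_n(\cdot,\th)$. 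The point is that $\int_{-T_n}^{T_n}e^{\si(|t|-T_n)/2}\,dt\le 4/\si$ is bounded independently of $T_n$, so the length is controlled by $C\bigl(T_n\sqrt{e_n} + \si^{-1}(\ep+\de_n)^{1/2}\bigr)$; the first term tends to $C\nu=0$ and the second is killed by letting $\ep\to 0$ after $n\to\infty$. To repair your argument you would need to replace the $L^2$-in-$(t,\tau)$ Cauchy--Schwarz step by this pointwise bound and estimate $|u_n(t,\th)-u_n(s,\th)|\le\int_s^t|\p_\tau u_n|\,d\tau$ directly, which is essentially the paper's proof.
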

\begin{proof}
First recall that the connection $A_n$ converges to the limit flat connection $A_\infty$ by (\ref{e42}) and $u_n$ satisfies
equation~(\ref{e12}). Let $x=(t, \th)$ be a point on the cylinder and $\C_1(t) = [t-1, t+1]\times S^1$ be a sub-cylinder. Since there is no energy concentration, by the $\ep$-regularity (Lemma~\ref{l:reg2}), we have
\begin{equation}\label{e531}
|\bd_n u_n(x)| \le C(\norm{\bd_n u_n}_{L^2(\C_1(t))} + \de_n\exp\si(|t| - T_n)),
\end{equation}
where $\si$ is a constant independent of $n$.
Moreover, by Lemma~\ref{l2}, we have exponential decay for the $\th$-direction energy
\begin{equation}\label{e53}
\norm{\pn u_n}_{L^2(\C_1(t))}^2 \le C(\ep+\de_n)\exp\si(|t| - T_n).
\end{equation}
For the $t$-direction, by Lemma~\ref{l4}, we have
\begin{equation}\label{e532}
\norm{\p_t u_n}_{L^2(\C_1(t))}^2 \le 2e_n + C(\ep+\de_n)\exp\si(|t| - T_n).
\end{equation}
Combining (\ref{e531}), (\ref{e53}) and (\ref{e532}), we arrive at
\begin{equation}\label{e533}
|\bd_n u_n(x)| \le C\left(\sqrt{2e_n} + (\ep+\de_n)^\frac12\exp\frac{\si}2(|t| - T_n)\right).
\end{equation}

Let $\ga_n(\cdot) = u_n(\cdot, \th):[-T_n, T_n]\to M$ be the curve given by $u_n$ for fixed $\th$. Then the estimate above shows that
\begin{equation*}
|\dt{\ga_n}(t)| \le |\bd_n u_n(x)| \le C\left(\sqrt{2e_n} + (\ep+\de_n)^\frac12\exp\frac{\si}2(|t| - T_n)\right).
\end{equation*}
Integrating over $[-T_n, T_n]$, we obtain
\begin{equation*}
Length(\ga_n) = \int_{-T_n}^{T_n}|\dt{\ga_n}(t)|dt \le C\left(T_n\sqrt{e_n} + \frac{1}{\si}(\ep+\de_n)^\frac12\right).
\end{equation*}
By taking $n\to \infty$, we find that the length of $\ga_n$ converges to zero and hence the neck is a single closed orbit.
\end{proof}

\begin{lem}\label{l6}
If $0<\nu<\infty$, then for any fixed $t\in [-T_n, T_n]$, we have
\begin{equation}\label{55}
  \lim_{n\to\infty}T_n|\pn u_n| = 0
\end{equation}
and
\begin{equation}\label{54}
  \lim_{n\to\infty}T_n|\p_t u_n| = \frac{\nu}{\sqrt{2\pi}}.
\end{equation}
\end{lem}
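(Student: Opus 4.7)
The plan is to prove (\ref{55}) and (\ref{54}) in three linked steps. The non-degeneracy hypothesis enters through Lemma~\ref{l:uniform}, which ensures that the Poincar\'e constant $1/\si_n^2$ associated with the reference flat connection $\bar{A}_n=\al_n d\th$ is uniformly bounded, so $\si_n\ge\si>0$ for a single $\si$. This is what promotes the exponential decay of Lemma~\ref{l2} to a uniform rate across the sequence, and is the reason Lemma~\ref{l6} is restricted to the non-degenerate regime.

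First I would upgrade the integrated exponential decay of Lemma~\ref{l2},
\[ \Th_n(t)=\int_{S^1}|\pn u_n|^2\,d\th\le C(\ep+\de_n)\exp(\si(|t|-T_n)), \]
to a pointwise bound. The key identity is $\p_\th|\pn u_n|^2=2\langle\pn u_n,\pn^2u_n\rangle$, where the cross-term involving $\al_n$ drops out by skew-symmetry. Using the pointwise $C^1$ control on $\bd_n u_n$ coming from Lemma~\ref{l:reg2}, integration in $\th$ gives an oscillation estimate of the form $|\pn u_n(t,\th)|^2\le\tfrac{1}{2\pi}\Th_n(t)+C\sqrt{\Th_n(t)}$, hence $|\pn u_n(t,\th)|\le Ce^{\si(|t|-T_n)/4}$. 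For fixed $t$ this quantity decays exponentially in $T_n$ and beats the linear factor $T_n$, which directly yields (\ref{55}).

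Next I would combine the gauge-invariant decomposition $\int_{S^1}|\p_t u_n|^2\,d\th=e_n(t)+\Th_n(t)$ with Lemma~\ref{l4} (which gives $|e_n(t)-e_n|\le C\ep\de_n\to 0$) and Step~1 (which gives $\Th_n(t)\to 0$ exponentially). Multiplying by $T_n^2$,
\[ \int_{S^1}T_n^2|\p_t u_n(t,\th)|^2\,d\th\;\xrightarrow[n\to\infty]{}\;\lim_{n\to\infty}T_n^2e_n=\nu^2. \]
To conclude (\ref{54}) it remains to control the $\th$-oscillation of $T_n^2|\p_t u_n(t,\th)|^2$. A direct computation, using that $\pn$ commutes with $\p_t$ (the constant $\al_n$ is $t$-independent in the balanced temporal gauge) and that the $\al_n$-term again vanishes by skew-symmetry, gives $\p_\th|\p_t u_n|^2=2\langle\p_t u_n,\pn\p_t u_n\rangle$, so $|\p_\th|\p_t u_n|^2|\le C\sqrt{e_n}\,|\pn\p_t u_n|$.

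The main obstacle will be the final input: a pointwise exponential decay $|\pn\p_tu_n(t,\th)|\le Ce^{\si(|t|-T_n)/4}$ analogous to Step~1. My plan is to differentiate the rescaled equation~(\ref{e12}) in $t$, so that $w_n:=\p_t u_n$ satisfies a linear second-order equation whose right-hand side involves the curvature term $\Ga$ evaluated on $\bd_n u_n$ and its $t$-derivative. Running the Bochner-type argument of Lemma~\ref{l1}--Lemma~\ref{l2} on $\Psi_n(t):=\int_{S^1}|\pn w_n|^2\,d\th$ should yield a differential inequality $\Psi_n''\ge\si^2\Psi_n-\text{error}$, where the error terms are absorbed using the already-proved smallness of $|\pn u_n|$ and the no-concentration bound (\ref{e40}); non-degeneracy is what keeps $\si$ uniform throughout the bootstrap. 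Once pointwise decay of $|\pn\p_tu_n|$ is in hand, the oscillation satisfies $\operatorname{osc}_\th(T_n^2|\p_tu_n|^2)\le CT_n^2\sqrt{e_n}\,e^{\si(|t|-T_n)/4}\le C\nu T_ne^{\si(|t|-T_n)/4}\to 0$, and combining with Step~2 yields $T_n^2|\p_tu_n(t,\th)|^2\to\nu^2/(2\pi)$ uniformly in $\th$, which is (\ref{54}).
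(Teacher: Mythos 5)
Your Steps 1 and 2 are sound, and you correctly identify that non-degeneracy enters only through the uniform lower bound $\si_n\ge\si>0$ on the Poincar\'e constants (Lemma~\ref{l:uniform}). The oscillation trick in Step 1 — $\p_\th|\pn u_n|^2=2\<\pn u_n,\pn^2u_n\>$ after the skew-symmetric term cancels, combined with the integrated decay of $\Th_n(t)$ and a pointwise bound on $\pn^2u_n$ from bootstrapped $\ep$-regularity — does yield (\ref{55}) for fixed $t$. The paper, however, proves the lemma by a different device: it rescales $v_n:=T_nu_n$, uses the bootstrapped estimate $T_n|\bd_n^ku_n|\le C_k\bigl(T_n\sqrt{2e_n}+T_n(\ep+\de_n)^{1/2}e^{\si(|t|-T_n)/2}\bigr)$ to get uniform $C^k_{loc}$ bounds, and extracts a smooth limit $v_\infty$ solving $\p_t^2v_\infty+\p_{\th,\al_\infty}^2v_\infty=0$ with $\p_{\th,\al_\infty}v_\infty=0$; then $\p_t^2v_\infty=0$, the identity $\p_\th|\p_tv_\infty|^2=-2\<\al_\infty\p_tv_\infty,\p_tv_\infty\>=0$ holds \emph{exactly} in the limit, so $|\p_tv_\infty|$ is constant and the normalization $\int_{S^1}|\p_tv_\infty|^2d\th=\lim T_n^2e_n=\nu^2$ gives (\ref{54}). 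What the limit argument buys is precisely that it never needs a quantitative rate for $\pn\p_tu_n$ at finite $n$: the $\th$-constancy of $|\p_tv_\infty|$ is free once $\p_{\th,\al_\infty}v_\infty=0$.

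Your Step 3 does need such a rate, and the bound you announce, $|\pn\p_tu_n|\le Ce^{\si(|t|-T_n)/4}$, is more than the proposed Bochner argument will deliver. Differentiating the equation in $t$ and running Lemma~\ref{l1}--\ref{l2} on $\Psi_n(t):=\int_{S^1}|\pn\p_tu_n|^2d\th$, the nonlinearity contributes forcing terms such as $(\nabla\Ga)(\p_tu_n)(Du_n,Du_n)$ and $\Ga(u_n)(\p_t^2u_n,\p_tu_n)$; after Young's inequality their contribution is of order $\ep^2\int_{S^1}|Du_n|^2d\th\approx\ep^2\bigl(e_n+\Th_n(t)\bigr)$, which is \emph{not} exponentially small in $|t|-T_n$. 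The maximum-principle comparison therefore leaves a residual $\Psi_n(t)\le Ce^{\si(|t|-T_n)}(\cdots)+C\ep^2e_n/\si^2$, and the oscillation of $T_n^2|\p_tu_n|^2$ picks up a term of order $T_n^2\sqrt{e_n}\cdot\ep\sqrt{e_n}\sim\ep\nu^2$, which does not vanish for fixed $\ep$. The step is salvageable, because the no-concentration hypothesis (\ref{e:no-concentration}) lets you take $\ep=\ep_n\to0$ (or let $\ep\to0$ at the very end, as in the proof of Theorem~\ref{t:energy-identity}), but this must be said explicitly: as written, Step 3 asserts a pure exponential decay that is not available. Either state the Bochner conclusion in the weaker form above and close with the $\ep\to0$ observation, or switch to the paper's rescaling argument, which sidesteps the issue entirely.
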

\begin{proof}
By the arguments in the proof of Lemma~\ref{l7}, we have pointwise estimate (\ref{e533}) for $|\bd_n u_n|$. In fact, by a
bootstrapping technique(cf. \cite{S}), we may improve the $\ep$-regularity to hold for all higher derivatives of $u_n$. Namely, it is easy to show that for any integer $k\ge 1$, we have
\begin{equation*}
|\bd_n^k u_n(x)| \le C_k\left(\sqrt{2e_n} + (\ep+\de_n)^\frac12\exp\frac{\si}2(|t| - T_n)\right),
\end{equation*}
where $C_k$ is a constant depending on $k$.

Multiplying the above inequality by $T_n$, we get
\begin{equation}\label{e51}
T_n|\bd_n^k u_n(x)| \le C_k\Big(T_n\sqrt{2e_n} + T_n(\ep+\de_n)^\frac12 \exp\frac{\si}2(|t| - T_n)\Big).
\end{equation}
Since $\lim_{n\to \infty} T_n = \infty$ and $\si$ is uniform, it follows that
\begin{equation}\label{e52}
\lim_{n\to \infty}T_n\exp\frac{\si}2(|t|+1 - T_n) = 0.
\end{equation}
Thus, if we define the function
\[ v_n(t, \th) := T_nu_n(t, \th), \]
then (\ref{e51}) and (\ref{e52}) implies that for sufficiently large $n$,
\begin{equation*}
\norm{\bd_n^k v_n}_{C^0(\C_n)} \le C_k(\sqrt{2}\nu + 1).
\end{equation*}
On the other hand, $\bar{A}_n$ converges to the limit connection $A_\infty$. It follows that $\bar{D}_n$ is equivalent to the standard Levi-Civita connection
$\nabla$, which implies
\begin{equation*}
\norm{\nabla^k v_n}_{C^0(\C_n)} \le C_k(\sqrt{2}\nu + 1) + C(A_\infty)
\end{equation*}
where the constant $C(A_\infty)$ only depends on $A_\infty$. Hence $v_n$ converges to some $v_\infty$ in $C^k_{loc}(\Real^1\times S^1)$ for any $k\ge
1$. Since $u_n$ satisfies equation~(\ref{e12}), it is obvious that $v_n$ satisfies
\begin{equation*}
\p_t^2 v_n + \pn^2 v_n + \frac{1}{T_n}\Ga(u_n)(\bd_n v_n, \bd_n v_n) = T_n f_n.
\end{equation*}
Recall that $\norm{f_n}_{L^\infty}$ decays exponentially. Thus, by taking $n \to \infty$, we obtain
\begin{equation}\label{e9}
\p_t^2 v_\infty + \p_{\th, \al_\infty}^2 v_\infty = 0
\end{equation}
where $\p_{\th, \al_\infty} = \p_\th +\al_\infty$ is the derivative induced by $A_\infty$. However, (\ref{e53}) implies that
\[ \p_{\th, \infty} v_\infty = \lim_{n\to \infty}T_n \pn u_n = 0. \]
It follows that $\p_{\th, \al_\infty}^2 v_\infty = 0$ and hence $\p_t^2 v_\infty = 0$ by (\ref{e9}). Thus $\p_t v_\infty(\cdot,\th)$ is independent of $t$ for fixed $\th\in [0,2\pi]$. On the other hand, we have
\begin{align*}
 \p_\th|\p_t v_\infty|^2 &= 2(\p_\th\p_t v_\infty,\p_t v_\infty) = 2(\p_t \p_\th v_\infty,\p_t v_\infty)\\
 & = -2(\p_t (\al_\infty\cdot v_\infty),\p_t v_\infty)= -2(\al_\infty\cdot\p_t v_\infty,\p_t v_\infty)\\
 & = 0.
 \end{align*}
The last identity uses the fact that $\al_\infty$ is skew-symmetric. Therefore, $|\p_t v_\infty|$ is constant all over the cylinder, which implies that
\begin{equation*}
  \lim_{n\to\infty}T_n|\p_t u_n| = \lim_{n\to\infty}\frac{\nu}{\sqrt{e_n}}|\p_t u_n| = \frac{\nu|\p_t v_\infty|}{\sqrt{\int_{\{0\}\times S^1}|\p_t v_\infty|^2 d\th}}=\frac{\nu}{\sqrt{2\pi}}.
\end{equation*}
\end{proof}

\begin{rem}\label{r2}
It can be verified that for any fixed $t$, we have
\begin{equation}\label{e55}
  \lim_{n\to \infty} T_n^2|\pn^2 u_n| = 0.
\end{equation}
Actually, we can follow the same method as in the proof of Lemma~\ref{l1} to prove that the quantity
$$\Th_1(t) :=\int_{S^1}|\pn^2 u_n|^2d\th$$
also decays exponentially along the cylinder. To do this, one only has to apply the Poincar\'e inequality to show that
$\Th_1(t)$ satisfies a similar equation as (\ref{ee1}).
\end{rem}

Now we are in the position to investigate the geometry of the neck. Define the re-parameterized map
$$w_n(s, \th) := u_n(T_n s, \th)$$
on the fixed cylinder $\C_1 = [-1, 1]\times S^1$. Obviously,
\begin{equation}\label{e59}
\lim_{n\to \infty}\pn w_n = \lim_{n\to \infty}\pn u_n = 0.
\end{equation}
It follows from the fact $\lim_{n\to \infty}\al_n = \al_\infty$ that
\[ |\p_\th w_n| \le |\pn w_n| + |\al_n\cdot w_n| \le C. \]
On the other hand, by Lemma~\ref{l6}, we have
$$ \lim_{n\to \infty}|\p_s w_n| = \lim_{n\to \infty}T_n|\p_t u_n| = \frac{\nu}{\sqrt{2\pi}}.$$
Therefore $w_n$ converges to a map $w$ in $C^0(\C_1)$ .

Moreover, using the equation (\ref{e12}), we have
\begin{equation*}
\begin{aligned}
  \p_s^2 w_n &= T_n^2 \p_t^2 u_n\\
  &= -T_n^2(\pn^2u_n + \Ga(u_n)(\bd_n u_n, \bd_n u_n) + f_n)\\
  &= -\Ga(w_n)(\p_s w_n, \p_s w_n) -T_n^2(\pn^2u_n + \Ga(u_n)(\pn u_n, \pn u_n) + f_n).
\end{aligned}
\end{equation*}
By the exponential decay of $f_n$, we have
\[ \lim_{n\to \infty} T_n^2 f_n = 0. \]
By Lemma~\ref{l6}, we have
\[ \lim_{n\to \infty} T_n^2\Ga(u_n)(\pn u_n, \pn u_n) = 0 \]
In view of Remark~\ref{r2}, we also have
\begin{equation}\label{e56}
 \lim_{n\to \infty} T_n^2 \pn^2u_n = 0.
\end{equation}
Therefore, we obtain
\begin{equation}\label{e57}
\lim_{n\to \infty} \(\p_s^2 w_n + \Ga(w_n)(\p_s w_n, \p_s w_n)\) = 0.
\end{equation}

Note that (\ref{e56}) implies
\[ |\p_\th^2 w_n| \le |\pn^2 w_n| + 2|\al_n\cdot \pn w_n| + |\al_n^2\cdot w_n| \le C. \]
and (\ref{e57}) implies
\[ |\p_s^2 w_n| \le |\Ga(w_n)||\p_s w_n|^2 + 1 \le C. \]
Consequently, $w_n$ actually converges in $C^1(\C_1)$ to the limit map $w$.

It follows from (\ref{e59}) that
\begin{equation}\label{e555}
\p_{\th, \al_\infty} w = \lim_{n\to \infty}\pn w_n = 0,
\end{equation}
and from (\ref{e57}) that $w$ satisfies the equation
\begin{equation}\label{e58}
 \p_s^2 w + \Ga(w)(\p_s w, \p_s w) = 0,
\end{equation}
in the weak sense. Hence by the standard elliptic estimates, $w$ is a smooth map. In fact, in view of (\ref{e555}) and (\ref{e58}), $w$ is a twisted geodesic (see Definition~\ref{d:twisted-geodesic}). In other words, for any fixed $\th\in S^1$, the curve $\ga_\th := w(\cdot, \th)$ is a geodesic in $M$, while for any fixed $t$, the curve
$c_t : = w(t, \cdot)$ is an orbit generated by $\al_\infty \in \g$ in $M$. Moreover, the length of the geodesic is given by
\[ Length(\ga_\th) = \lim_{n\to \infty} \int_{- T_n}^{ T_n} |\p_t u_n| dt = \lim_{n\to \infty} \frac{2}{\sqrt{2\pi}}\sqrt{e_n}T_n = \frac{2}{\sqrt{2\pi}}\nu. \]

Finally, if the limit $\nu = \infty$, we may choose $T_n' = \frac{\sqrt{2\pi}}{2\sqrt{e_n}}$ such that
$$\nu' = \lim_{n\to \infty}\sqrt{e_n}T_n'=1.$$
Then the arguments above shows that on a sub-cylinder of length $2T_n'$, the images converge to a twisted geodesic of length $\frac{2}{\sqrt{2\pi}}$. Since $T_n/T_n' \to \infty$, we may find infinitely many such sub-cylinders on $\C_n$. Thus we obtain an infinitely long twisted geodesic.

To summarize, we have proved the following result in non-degenerate case.

\begin{thm}\label{t:non-deg}
  Suppose $\{(A_n, u_n)\}$ satisfies the hypothesis of Theorem~\ref{t:energy-identity}. If there is no energy concentration and $A_n$ is non-degenerating, then the following hold.
  \begin{enumerate}
    \item If $0<\nu <\infty$, then $u_n(\C_n)$ converges to a twisted geodesic of length $\frac{2}{\sqrt{2\pi}}\nu$;
    \item If $\nu = 0$, then the $u_n(\C_n)$ converges to a single closed orbit.
    \item If $\nu=\infty$, then the neck contains an infinitely long twisted geodesic.
  \end{enumerate}
\end{thm}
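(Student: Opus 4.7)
My plan is to handle the three cases separately, leveraging the machinery already established. Case 2 ($\nu = 0$) is exactly the conclusion of Lemma~\ref{l7}, so I would simply cite it: the pointwise bound $|\bd_n u_n(x)| \le C(\sqrt{2e_n} + (\ep+\de_n)^{1/2}\exp\frac{\si}{2}(|t|-T_n))$ integrated along the cylinder gives total arclength $\lesssim T_n\sqrt{e_n} + O(1/\si)$, which tends to $0$ when $\nu = 0$ and $\ep,\de_n \to 0$, so the neck collapses to a single closed orbit.

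For Case 1 ($0 < \nu < \infty$), I would use the key rescaling $w_n(s,\theta) := u_n(T_n s,\theta)$ on the fixed cylinder $\C_1 = [-1,1]\times S^1$, where Lemma~\ref{l6} already yields the pointwise limits $T_n|\p_t u_n| \to \nu/\sqrt{2\pi}$ and $T_n|\p_{\theta,\al_n} u_n| \to 0$, and Remark~\ref{r2} yields $T_n^2|\p_{\theta,\al_n}^2 u_n| \to 0$. These give $C^1$ bounds on $w_n$ after noting $|\p_\theta w_n| \le |\p_{\theta,\al_n} w_n| + |\al_n\cdot w_n|$, so by Arzel\`a--Ascoli $w_n$ sub-converges in $C^0$ and in fact in $C^1$ to some $w$. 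Writing out $\p_s^2 w_n = T_n^2 \p_t^2 u_n$ and substituting the reduced equation $\p_t^2 u_n + \p_{\theta,\al_n}^2 u_n + \Ga(u_n)(\bd_n u_n, \bd_n u_n) = f_n$, the $\p_{\theta,\al_n}^2$ term and the $\Ga(u_n)(\p_{\theta,\al_n} u_n,\p_{\theta,\al_n} u_n)$ terms vanish in the limit by the estimates above, the $f_n$ term vanishes by its exponential decay, and what survives is
\begin{equation*}
\p_s^2 w + \Ga(w)(\p_s w, \p_s w) = 0,
\end{equation*}
coupled with $\p_{\theta,\al_\infty} w = 0$. Elliptic regularity upgrades $w$ to smooth, and this is exactly the definition of a twisted geodesic (Definition~\ref{d:twisted-geodesic}). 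Its length is $\int_{-1}^{1}|\p_s w|\,ds = 2\cdot \nu/\sqrt{2\pi}$, which I would verify by first noting $|\p_t v_\infty|$ is constant (the skew-symmetry argument from Lemma~\ref{l6}) and then integrating $|\p_t v_\infty|^2$ on $\{0\}\times S^1$ to normalize against $\nu^2$.

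For Case 3 ($\nu = \infty$), the plan is to rescale by a slower factor. I choose $T_n' := \frac{\sqrt{2\pi}}{2\sqrt{e_n}}$, so that $\sqrt{e_n}\,T_n' = \frac{\sqrt{2\pi}}{2}$ and hence the associated $\nu'$ equals $1$. Since $T_n/T_n' = \frac{2}{\sqrt{2\pi}}T_n\sqrt{e_n} \to \infty$, every sub-cylinder of length $2T_n'$ centered at an arbitrary point $t_0 \in [-T_n + T_n', T_n - T_n']$ lives inside $\C_n$, and the Case 1 analysis applied to the shifted maps $u_n(\cdot + t_0, \cdot)$ on $[-T_n',T_n']\times S^1$ produces a twisted geodesic segment of length $\frac{2}{\sqrt{2\pi}}$ in the limit. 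Taking countably many translates $t_0 = k \cdot 2T_n'$ for $k \in \Integer$ in the allowable range and piecing together the limits (they glue continuously since the base curve is unique up to reparametrization, and the geodesic equation propagates) yields an infinitely long twisted geodesic inside the neck.

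The main obstacle I expect is in Case 1, specifically the passage from the vanishing of $T_n^2 \p_{\theta,\al_n}^2 u_n$ (Remark~\ref{r2}) together with the reduced equation to a genuine $C^1$ (and then $C^2$) limit for $w_n$: the estimate on $\p_\theta^2 w_n$ requires that all the lower-order terms involving $\al_n$ and its commutators are uniformly controlled in $n$, which leans on the non-degeneracy of $A_n$ through the uniform Poincar\'e constant from Lemma~\ref{l:uniform}. Verifying that the exponential decay constant $\si$ can indeed be chosen uniformly in $n$ under the non-degeneracy hypothesis is the delicate ingredient that makes the bootstrapping work; once this is in hand, the identification of $w$ as a twisted geodesic and the length computation are routine.
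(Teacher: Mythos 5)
Your proposal is correct and follows essentially the same route as the paper: Lemma~\ref{l7} for the $\nu=0$ case, the rescaling $w_n(s,\th)=u_n(T_ns,\th)$ combined with Lemma~\ref{l6} and Remark~\ref{r2} to extract the twisted geodesic and its length when $0<\nu<\infty$, and the slower rescaling $T_n'=\frac{\sqrt{2\pi}}{2\sqrt{e_n}}$ to produce infinitely many geodesic segments when $\nu=\infty$. The ``delicate ingredient'' you flag --- uniformity of the decay rate $\si$ via the non-degeneracy hypothesis and Lemma~\ref{l:uniform} --- is exactly the point the paper leans on as well.
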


\subsubsection{The degenerate case}\label{s4}

Finally, let us try to understand the geometry of the neck in the degenerate case, which could be more complicated than the non-degenerate case above. More precisely, suppose the connections $A_n=\al_n d\th$ converges to $A_\infty = \al_\infty d\th$ and the convergence is degenerating. Denote by
$$\rho_n := |\al_n - \al_\infty|$$
and
$$\be_n := (\al_n-\al_\infty)/\rho_n.$$
Since $|\be_n| = 1$ and the Lie algebra $\g$ is finite dimensional, we may suppose $\be_n$ converges to a limit $\be_\infty\in \g$. The quantities $\rho_n$ reflects the degenerating speed of $A_n$ and $\be_\infty$ shows the direction along which $A_n$ degenerates.

In this case, there are three kinds of degenerations involved. Namely, the degeneration of the metric, the blowing-up of the map and the degeneration of the connection. Intuitively, the degeneration of the metric $g_n$ corresponds to the formation of the node, which, by the conformal change, is reflected by the length $T_n$ of the cylinder $\C_n$. The information of the blowing-up of the maps $u_n$ on the cylinder is essentially contained in the quantities $e_n$ defined by (\ref{e46}). If the connection behaves well, i.e. the connection is non-degenerating, the limits $\mu$ and $\nu$ defined before accounts for the competing of $T_n$ and $e_n$, which arise in the energy identity and the length formula. The energy identity in Theorem~\ref{t:energy-identity} also gives us a clue that the degenerating speed of the connection can be measured by $\rho_n = |\al_n - \al_\infty|$.

What is the geometric influence brought by the degeneration of the connection $A_n$? A good illustration can be found in \cite{MT}, where the authors discussed in detail the moduli space of twisted holomorphic curves in the special case where the Lie group is simply $S^1$. Recall that the twisted holomorphic curves are just minimal YMH fields which satisfy the equation
\begin{equation}\label{e:thc}
  \left\{
  \begin{aligned}
  &\bar{\partial}_A\phi = 0,\\
  &\iota_v F_A + \mu(\phi) = c.
  \end{aligned}
  \right.
\end{equation}
Here all the notations agrees with our previous setting except that $\bar{\partial}_A$ is the d-bar operator induced by the connection $A$ and $v$ is a volume form on the base manifold $\Si$. Note that the Lie algebra of $S^1$ is simply $i\Real$, hence a flat connection $A$ on a long cylinder has the form $i\al d\th$ in the temporal gauge, where $\al \in \Real$ is a real number.

In the compactification of moduli space of holomorphic curves in the classical Gromov-Witten theory, there is no neck between the bubbles. Or, in other words, the neck shrinks to a point. However, when the connection (or gauge) comes into the game, the holomorphic curve should be replaced by the twisted holomorphic curves and new phenomenons appear.

The appearance of the connection $A$ is due to the Hamiltonian action of the Lie group $G$ on the manifold $M$. So it not surprising that when the connection is non-degenerating, the neck is no longer a point, but a single orbit of that point, which is generated by the group action induced by $A$. However, if the connection is degenerating, it can be shown that the neck converges to a curve $\ga$ which satisfies the following equation
\begin{equation}\label{e60}
\ga_t = -k(t)Ji\cdot \ga,
\end{equation}
where $k$ is a real function and the dot denotes the action of $i$ on $\ga$. If we denote by $h$ the corresponding Hamiltonian induced by $i$, then the above equation can be rewrite as
\begin{equation*}
\ga_t = k(t)\nabla h(\ga).
\end{equation*}
Therefore the neck turn out to be a gradient line of the Hamiltonian. The proof of the above result in \cite{MT} is based on the following key observations: 1) the angular derivative of the map vanishes in the limit; 2) the degeneration of the connection can be described by $A_n- A_\infty$, which corresponds to the term $k(t)i$ and is related to the Hamiltonian.

Keeping the above example in mind, we are led to the following parallel, but more general picture in our current setting. The key observation is that a gradient line which satisfies a first-order equation~(\ref{e60}) should be replaced by a geodesic in our second-order setting. Indeed, we establish the following result which shows a new geometric phenomenon that the neck turns out to be a closed orbit of a geodesic with potential. There should be some interesting geometric setting where this result can be applied.

More precisely, suppose the connections $A_n$ converges to $A_\infty$ and the convergence is degenerating. Using the notations given in the beginning of this section, we define the limit
\begin{equation*}
\ka := \lim_{n\to \infty}T_n\rho_n
\end{equation*}
Since we don't have any a prior information about the degenerating speed $\rho_n$, the limit $\ka$ can be 0, finite or $+\infty$. For convenience we use $\top$ to denote the projection to the tangent space of $M$.

\begin{thm}\label{t:degenerate}
  Suppose $\{(A_n, u_n)\}$ satisfies the hypothesis of Theorem~\ref{t:energy-identity}. Suppose there is no energy concentration and $A_n$ is degenerating. If the limits $\nu$ and $\ka$ are finite,   then the images of $u_n$ converges to a closed orbit of a perturbed geodesic given by a smooth map $v_\infty:\C_1\to M$ which satisfies 
  \begin{equation}\label{e64}
  \left\{
  \begin{aligned}
  &\p_{\th, \al_\infty} v_\infty=0,\\
  &(\p_s^2 v_\infty + \ka^2\cdot \be_\infty^2 v_\infty)^\top = 0.
  \end{aligned}
  \right.
  \end{equation}
\end{thm}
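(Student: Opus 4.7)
The proof follows the blueprint of the non-degenerate case (Theorem~\ref{t:non-deg}) in Section~\ref{s3}, with an extra layer of analysis to absorb the degeneration rate $\rho_n=|\al_n-\al_\infty|\to 0$ of the connection. Working in the balanced temporal gauge of Lemma~\ref{l:gauge} with $a_n(0,\cdot)=\al_n\to\al_\infty$, I use the reduced equation~(\ref{e47}) $D_{A_\infty}^*D_{A_\infty}u_n=\tilde{f}_n$ together with the exponential decay of $\tilde{\Th}_n(t)=\int_{S^1}|\p_{\th,\al_\infty}u_n|^2\,d\th$ from the proof of Theorem~\ref{t:energy-identity}(2) (inequality~(\ref{e63})), and a bootstrap exactly as in the proof of Lemma~\ref{l6} and Remark~\ref{r2}. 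This yields pointwise bounds
\[
|\p_t u_n|+|\p_{\th,\al_\infty}u_n|+|\p_{\th,\al_\infty}^{2}u_n|\le C\bigl(\sqrt{e_n}+\de_n e^{\si_\infty(|t|-T_n)/2}\bigr),
\]
together with the trivial estimate $|\De_n u_n|\le\rho_n|u_n|$ where $\De_n:=\X_n-\X_\infty$. Setting $w_n(s,\th):=u_n(T_n s,\th)$ on $\C_1=[-1,1]\times S^1$ and using $T_n\sqrt{e_n}\to\nu$ and $T_n\rho_n\to\ka$, all the $T_n^k$-weighted quantities stay uniformly controlled, and I extract a subsequence with $w_n\to v_\infty$ in $C^k_{loc}(\C_1)$ for every $k\ge 0$.

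The first equation $\p_{\th,\al_\infty}v_\infty=0$ is then immediate from the pointwise decay of $\p_{\th,\al_\infty}u_n$ combined with $\p_{\th,\al_\infty}w_n=\p_{\th,\al_\infty}u_n$. Consequently $v_\infty(s,\th)=\exp(-\th\X_\infty)v_\infty(s,0)$ is an orbit of the one-parameter subgroup $\exp(\cdot\,\X_\infty)$ swept out by the curve $s\mapsto v_\infty(s,0)$ lying in the fixed-point set $M^{\al_\infty}$.

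For the second equation I rescale the extrinsic form of~(\ref{e12}), namely $\p_t^2 u_n+\pn^2 u_n+\Ga(u_n)(\bd_n u_n,\bd_n u_n)=f_n$ with $|f_n|\le C\de_n e^{|t|-T_n}$, multiply through by $T_n^2$, and use the operator identity
\[
\pn^2=\p_{\th,\al_\infty}^{2}+2\De_n\p_{\th,\al_\infty}+[\X_\infty,\De_n]+\De_n^{2}.
\]
Passing to the limit term-by-term: $T_n^2 f_n\to 0$ by exponential decay; $T_n^2\p_{\th,\al_\infty}^{2}u_n\to 0$ and $T_n\p_{\th,\al_\infty}u_n\to 0$ from the bootstrap; $(T_n\De_n)(T_n\p_{\th,\al_\infty}u_n)\to 0$ since $T_n\De_n\to\ka\be_\infty$ is bounded; and $(T_n\De_n)^{2}u_n\to\ka^2\be_\infty^{2}v_\infty$. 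The second-fundamental-form contributions $\Ga(w_n)(\p_s w_n,\p_s w_n)$ and $T_n^2\Ga(w_n)(\pn w_n,\pn w_n)$ are normal to $M$ and therefore project away. Projecting the limit identity tangentially then produces the desired $(\p_s^2 v_\infty+\ka^2\be_\infty^{2}v_\infty)^{\top}=0$.

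The main obstacle is the commutator term $T_n^2[\X_\infty,\De_n]u_n=T_n(T_n\rho_n)\,X_{[\al_\infty,\be_n]}(u_n)$, which is tangent to $M$ (being the infinitesimal $G$-action of $[\al_\infty,\be_n]\in\g$) but a priori grows like $T_n$ unless $[\al_\infty,\be_\infty]=0$. The remedy is to exploit the residual gauge freedom in the balanced temporal gauge noted in Remark~\ref{r1}: by a constant conjugation in $G$ I can arrange the sequence $\al_n$ to lie in a fixed maximal torus containing $\al_\infty$, so that $[\al_n,\al_\infty]=0$ and hence $[\al_\infty,\be_n]=0$ for all $n$. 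In this refined gauge the commutator contribution vanishes identically, which closes the argument and yields equation~(\ref{e64}).
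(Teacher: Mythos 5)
Your proposal follows the same route as the paper's proof: rescale $v_n(s,\th)=u_n(T_ns,\th)$ on $\C_1$, expand $\p_{\th,\al_n}^2$ around $\p_{\th,\al_\infty}^2$ using $\al_n=\al_\infty+\rho_n\be_n$, invoke the exponential decay~(\ref{e63}) of $\tilde{\Th}_n$ and the finiteness of $\nu$ and $\ka$ to get uniform $C^k$ bounds on $v_n$, and pass to the limit in the rescaled equation, with the second fundamental form terms disappearing under the tangential projection. The one genuine difference is your treatment of the cross terms. Your operator identity $\p_{\th,\al_n}^2=\p_{\th,\al_\infty}^2+2\De_n\p_{\th,\al_\infty}+[\X_\infty,\De_n]+\De_n^2$ is the exact expansion, whereas the paper's displayed version (leading to~(\ref{e66})) silently omits the commutator $\rho_n[\al_\infty,\be_n]\cdot u_n$. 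You are right that this term matters: after multiplication by $T_n^2$ it is $T_n(T_n\rho_n)X_{[\al_\infty,\be_n]}(u_n)\sim T_n\ka\, X_{[\al_\infty,\be_\infty]}(v_\infty)$, it is the infinitesimal action of a Lie algebra element and hence tangent to $M$ (so it does not project away), and it diverges unless $\ka=0$ or $[\al_\infty,\be_\infty]=0$. For nonabelian $G$ this is a real gap in the paper's argument, and your idea of removing it by a constant conjugation is the correct repair.

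Two refinements. First, conjugating all the $\al_n$ into a fixed maximal torus by arbitrary $g_n$ can distort $\rho_n$ by an error of order $d(g_n,g_\infty)$, which near a singular $\al_\infty$ need not be $O(\rho_n)$; this would change $\ka$ and $\be_\infty$, which appear in the conclusion. The clean choice is to replace $\al_n$ by the point $p_n$ of its adjoint orbit closest to $\al_\infty$: criticality of the distance function on the orbit forces $[p_n,\al_\infty]=0$ automatically, and $|p_n-\al_\infty|\le\rho_n$, so $\ka$ remains finite and the commutator vanishes in this gauge. Second, two small imprecisions in your estimates: the pointwise bound coming from the $\ep$-regularity applied to $\bd_nu_n$ carries an extra additive $\rho_n$ (and the constant $(\ep+\de_n)^{1/2}$ rather than $\de_n$) on the right-hand side, which is harmless since $T_n\rho_n\to\ka$; and the vanishing of the term $(T_n\De_n)(T_n\p_{\th,\al_\infty}u_n)$ requires $T_n\p_{\th,\al_\infty}u_n\to 0$ at $t=T_ns$, which follows from the super-polynomial decay furnished by~(\ref{e63}) for fixed $|s|<1$, not from the crude bound $T_n\sqrt{e_n}\to\nu$, which only gives boundedness. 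With these adjustments your argument is complete and in fact more careful than the paper's.
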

\begin{proof}
Recall the equation~(\ref{e12}) of $u_n$
\begin{equation}\label{e62}
\bar{D}_n^*\bar{D}_n u_n = (\p_t^2 u_n + \p_{\th, \al_n}^2 u_n)^\top =f_n.
\end{equation}
Moreover, $f_n$ is exponentially bounded by (\ref{e10}) and $\tilde{\Th}_n(t) := \int_{S^1\times\{t\}}|\p_{\th,A_\infty}u_n|^2d\th$ is exponentially bounded by (\ref{e63}). Since by definition $\al_n = \al_\infty + \rho_n \be_n$, we have
\[ \p_{\th, \al_n}^2 u_n = \p_{\th, \al_\infty}^2 u_n + 2\rho_n \be_n\p_{\th, \al_\infty} u_n + \rho_n^2 \be_n^2\cdot u_n.\]
Thus (\ref{e62}) is equivalent to
\begin{equation}\label{e66}
(\p_t^2 u_n + \rho_n^2 \be_n^2\cdot u_n)^\top = -(\p_{\th, \al_\infty}^2 u_n + 2\rho_n \be_n\p_{\th, \al_\infty} u_n)^\top + f_n.
\end{equation}
As in the proof of Lemma~\ref{l6}, we can bound the energy of $u_n$ on a fixed-length cylinder $\C_1(t)$ by
\begin{align*}
  \int_{\C_1(t)}|\bar{D}_n u_n|^2 &= \int_{\C_1(t)}|\p_t u_n|^2 + \int_{\C_1(t)}|\p_{\th, \al_n} u_n|^2\\
  & \le \int_{\C_1(t)}|\p_t u_n|^2 + \int_{\C_1(t)}|\rho_n \be_n \cdot u_n|^2 + \int_{\C_1(t)}|\p_{\th, \al_\infty} u_n|^2\\
  & \le C(e_n + \rho_n^2 + (\ep+\de_n)\exp\si_\infty(|t| - T_n)).
\end{align*}
Hence, by the $\ep$-regularity,
\begin{equation*}
|\bar{D}_n^k u_n| \le C_k\left(\sqrt{e_n} + \rho_n + (\ep+\de_n)^\frac12\exp\frac{\si_\infty}{2}(|t| - T_n)\right).
\end{equation*}
Multiplying both sides by $T_n$, we arrive at
\begin{equation}\label{e65}
T_n|\bar{D}_n^k u_n| \le C_k\left(T_n\sqrt{e_n} + T_n\rho_n + T_n(\ep+\de_n)^\frac12\exp\frac{\si_\infty}{2}(|t| - T_n)\right).
\end{equation}
Obviously, the right hand side of the above inequality is bounded, provided that the limits $\nu$ and $\ka$ are finite.

Now re-parameterize and set $v_n(s, \th) = u_n(T_n s, \th)$. In view of (\ref{e66}), $v_n$ satisfies equation
\begin{equation*}
(\p_s^2 v_n + T_n^2\rho_n^2 \be_n^2\cdot v_n)^\top = -T_n^2(\p_{\th, \al_\infty}^2 v_n + 2\rho_n \be_n\cdot\p_{\th, \al_\infty} v_n)^\top + T_n^2f_n.
\end{equation*}
Clearly, $|\p_s v_n|$ is bounded by (\ref{e65}). Moreover, the terms $\p_{\th, \al_\infty} v_n$, $\p_{\th, \al_\infty}^2 v_n$ and $f_n$ all decays exponentially. Following a similar argument of Section~\ref{s3}, it is easy to see that $v_n$ converges in $C^1$ to a limit map $v_\infty$ which satisfies the equation~(\ref{e64}).
\end{proof}

\begin{rem}\label{r3}
Using the notations above, we can write the energy identity of the degenerate case in Theorem~\ref{t:energy-identity} more precisely.
Namely, if we define
\[ \omega := \lim_{n\to \infty}T_n\rho_n^2, \]
then we have
\begin{align*}
&\lim_{n\to \infty}\int_{\C_n}|(\al_n-\al_\infty)\cdot u_n|^2d\th dt= \lim_{n\to \infty}\int_{-T_n}^{T_n}\int_{S^1}|\rho_n\be_n\cdot u_n|^2 d\th dt\\
&= \lim_{n\to \infty}\int_{-1}^1 \int_{S^1}T_n  |\rho_n\be_n\cdot v_n|^2 d\th ds = \omega \int_{-1}^1\int_{S^1}|\be_\infty\cdot v_\infty|^2d\th ds
\end{align*}
Therefore the identity~(\ref{e13}) becomes
\[ \lim_{n\to \infty}\E(u_n, A_n, \C_n) = 2\omega \int_{-1}^1\int_{S^1}|\be_\infty\cdot v_\infty|^2d\th ds + 2\mu.\]
\end{rem}

\begin{rem}
The neck we get in Theorem~\ref{t:degenerate} is a closed orbit of the curve $\ga_0(\cdot) := v_\infty(\cdot, 0)$. Obviously, $\ga_0$ is a critical point of the energy functional
\[ E(\ga) = \int|d\ga|^2ds + \int (\ga, Q \ga)ds, \]
where $Q= -\ka^2\be_\infty^2$ is a symmetric and non-negative matrix since $\be_\infty$ is skew-symmetric. In other words, the curve is a geodesic with quadratic potential $\int_I (v, Q v)ds$. If the manifold $M$ is the standard sphere, then $\ga_0$ coincides with the famous C. Neumann curve which is used to describe the motion of a charged particle influenced by a magnetic field.
\end{rem}

\begin{rem}
If $\nu+\ka = \infty$, then similar to the proof of Theorem~\ref{t:non-deg} , we can find infinitely many piece of such necks obtained by Theorem~\ref{t:degenerate}. The length of the neck can also by computed. However, a discussion in full generality seems unnecessary.
\end{rem}

Finally, it is easy to see that the neck is actually a twisted geodesic in special cases.

\begin{cor}
  Under the hypothesis of Theorem~\ref{t:degenerate}, if $\ka=0$ or $\be^2\cdot v_\infty = 0$,
  then the images of $u_n$ converges to a twisted geodesic.
\end{cor}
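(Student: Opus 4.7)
The plan is to apply Theorem~\ref{t:degenerate} directly and observe that under either of the stated hypotheses, the potential term in the second equation of (\ref{e64}) disappears, reducing the limit system to the defining equations of a twisted geodesic in the sense of Definition~\ref{d:twisted-geodesic}.

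First I would invoke Theorem~\ref{t:degenerate} to obtain, after passing to a subsequence, a $C^1$-limit $v_\infty:\C_1 \to M$ of the rescaled maps $v_n(s,\th):=u_n(T_n s,\th)$, together with the two equations $\p_{\th,\al_\infty}v_\infty=0$ and $(\p_s^2 v_\infty+\ka^2\be_\infty^2\cdot v_\infty)^\top=0$. The first equation already forces each $\th$-slice of the image of $v_\infty$ to lie in a single closed orbit of $\exp(2\pi\al_\infty)$, so only the $s$-direction behavior remains to be understood.

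Next I would examine the potential term $\ka^2\be_\infty^2\cdot v_\infty$. If $\ka=0$ the term vanishes trivially; if $\be_\infty^2\cdot v_\infty=0$ (as a vector in $\Real^K$, via the equivariant embedding from Theorem~\ref{t:MS}), the term vanishes as well. In either case the second equation of (\ref{e64}) collapses to $(\p_s^2 v_\infty)^\top=0$, i.e.\ $\n_s\p_s v_\infty=0$, which is the intrinsic geodesic equation in the $s$-variable for every fixed $\th\in S^1$.

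To conclude that $v_\infty$ is a twisted geodesic, I would verify it is a twisted harmonic map: since $\p_{\th,\al_\infty}v_\infty=0$, iterating the operator gives $\p_{\th,\al_\infty}^2 v_\infty=0$, and combined with $(\p_s^2 v_\infty)^\top=0$ we obtain $D_{A_\infty}^*D_{A_\infty}v_\infty=(\p_s^2 v_\infty+\p_{\th,\al_\infty}^2 v_\infty)^\top=0$. Together with $\p_{\th,\al_\infty}v_\infty=0$ this is exactly the content of Definition~\ref{d:twisted-geodesic}, so $v_\infty$ is a twisted geodesic and the images $u_n(\C_n)$ converge to the closed orbit of a geodesic traced out by $v_\infty$. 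There is essentially no obstacle here: the entire work has already been done in Theorem~\ref{t:degenerate}, and the corollary is just the observation that the quadratic potential in the $C.$ Neumann-type equation degenerates to zero under the extra assumption, so that the geodesic with potential collapses to an honest geodesic.
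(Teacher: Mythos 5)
Your proposal is correct and is exactly the argument the paper intends: the corollary is stated without proof as an immediate consequence of Theorem~\ref{t:degenerate}, and your observation that either hypothesis kills the potential term in (\ref{e64}), reducing the system to $\p_{\th,\al_\infty}v_\infty=0$ together with $(\p_s^2 v_\infty)^\top=0$ (hence $D_{A_\infty}^*D_{A_\infty}v_\infty=0$), is precisely the content of Definition~\ref{d:twisted-geodesic}. Your spelled-out verification that $v_\infty$ is a twisted harmonic map adds nothing beyond what the paper takes as evident, so the two arguments coincide.
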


\subsection{Application to twisted holomorphic curves}\label{s:app}

At last, we apply the results obtained above to the spacial case of twisted holomorphic curves. 

More precisely, suppose $(A_n, \phi_n)$ is a sequence of twisted holomorphic curves with bounded YMH energy which satisfy the equation~(\ref{e:thc}). Then by restricting to the collar area which is conformal to the cylinder $\C_n$, we obtain a sequence of YMH fields $(A_n, u_n)$. The results we obtained for general YMH fields all hold for $(A_n, u_n)$ since twisted holomorphic curves are nothing but a special kind of YMH fields. Moreover, in view of (\ref{e:thc}), $(A_n, u_n)$ satisfies the first order equation $\bar{\p}_{A_n} u_n=0$, which in balanced temporal gauge is equivalent to
\begin{equation}\label{e:7}
  \p_t u_n + J\pn u_n = 0.
\end{equation}
Thus it is obvious that the quantity $e_n$ defined by (\ref{e46}) is identically zero and $\mu=\nu=0$. Therefore, if $A_n$ is non-degenerating (which corresponds to the non-critical case in~\cite{MT}), by Theorem~\ref{t:energy-identity}, there is no energy on the neck and by Theorem~\ref{t:non-deg}, the neck shrinks to a single closed orbit. On the other hand, if $A_n$ is degenerating, we may suppose the quantity $\ka$ is finite. Then similar to the proof of Theorem~\ref{t:degenerate}, the re-parameterized maps $v_n(s, \th) = u_n(T_ns, \th)$ converges to a limit $v_\infty$ which satisfies $\p_{\th, \al_\infty}v_\infty = 0$.  Then from (\ref{e:7}) we may deduce that $v_\infty$ satisfies equation
\[ \p_sv_\infty + J\ka\be_\infty \cdot v_\infty = 0. \]
Using the moment map $\mu$, we can define the Hamiltonian with respect to $\be_\infty$ by $h(\cdot) = \<\mu(\cdot), \be_\infty\>$. It follows that $\be_\infty \cdot v_\infty = J\nabla h(v_\infty)$ and hence for any fixed $\th\in S^1$, the curve $\ga_\th(\cdot) :=v_\infty(\cdot, \th)$ turns out to be a gradient line satisfying
\[ \p_s\ga_\th - \ka\nabla h(\ga_\th) = 0. \]
Moreover, by Remark~\ref{r3}, we have the energy identity 
\begin{align*}
\lim_{n\to \infty}\E(u_n, A_n, \C_n) &= 2\omega \int_{-1}^1\int_{S^1}|\be_\infty\cdot v_\infty|^2d\th ds\\
&= -\frac{2\omega}{\ka} \int_{-1}^1\int_{S^1}\<\nabla h(v_\infty), \p_s v_\infty\> d\th ds\\
&= -\frac{2\omega}{\ka} \int_{S^1}h(v_\infty)d\th\Big|_{s=-1}^1 = 0,\\
\end{align*}
since by definition we have $\frac{\omega}{\ka} = \lim_{b\to \infty} \rho_n = 0$. Therefore, there is no energy on the neck in the degenerate case, either. This generalizes the compactness results in \cite{MT} from the special case of $G=S^1$ to arbitrary compact connected Lie group $G$

\section*{Acknowledgements}
Part of this work was carried out when the author was visiting Beijing International Center for Mathematical Research. The author would like
to thank Prof. Gang Tian for his constant support. He would also like to thank Prof. Youde Wang, Yuxiang Li, Miaomiao Zhu and Li Chen for many helpful
discussions.


\vspace{1cm}
\noindent{Chong Song}\\
School of Mathematical Sciences, Xiamen University, Xiamen 361005, P.R. China.\\
Email: songchong@xmu.edu.cn


\begin{thebibliography}{20}
\bibitem{B}
S. B. Bradlow, \emph{Special metrics and stability for holomorphic bundles with global sections}, J. Differential Geom., \textbf{33},
169-213(1991).

\bibitem{CGMS}
K. Cieliebak, A. Gaio, I. Mundet i Riera, D. Salamon, \emph{The symplectic vortex equations and invariants of Hamiltonian group
actions}. J. Symplectic Geom. \textbf{1}(3), 543-645(2002).

\bibitem{CGS}
K. Cieliebak, A. Gaio, and D. Salamon, \emph{J-Holomorphic Curves, Moment Maps, and Invariants of Hamiltonian Group Actions},
Internat. Math. Res. Not. \textbf{16}, 832-882(2000).

\bibitem{CLW}
L. Chen, Y. Li, and Y. Wang, \emph{The refined analysis on the convergence behavior of harmonic map sequence from cylinders}, J.
Geom. Anal. \textbf{22}(4), 942-963(2012).

\bibitem{CT}
J. Chen and G. Tian, \emph{Compactification of moduli space of harmonic mappings}, Comm. Math. Helv. \textbf{74},201-237(1999).

\bibitem{DT}
W. Ding and G. Tian, \emph{Energy identity for a class of approximate harmonic maps from surfaces}, Comm. Anal. Geom. \textbf{3},
543-554(1995).

\bibitem{D}
S. K. Donaldson, \emph{Twisted harmonic maps and the self-duality equations}, Proc. London Math. Soc. \textbf{55} (1987), 127-131.


\bibitem{FO}
K. Fukaya and K. Ono, \emph{Arnold conjecture and Gromov-Witten invariant}, Topology \textbf{38}, 933-1048(1999).

\bibitem{JT}
A. Jaffe and C. Taubes, \emph{Vortices and monopoles}, Structure of static gauge theories. Progress in Physics, 2. Birkh\''auser, Boston, Mass., 1980. v+287 pp.

\bibitem{LinW}
F. Lin and C. Wang, \emph{Energy identity of harmonic map flows from surfaces at finite singular time}, Calc. Var. PDE, \textbf{6},
369-380(1998).

\bibitem{L}
L. Lin, \emph{The heat flow for K\"ahler fibrations}, Arxiv preprint, arXiv:1211.5696 (2012).

\bibitem{MS}
J. Moore and R. Schlafly, \emph{On equivariant isometric embeddings}, Math. Z., \textbf{173}(2), 119-133(1980).

\bibitem{M1}
I. Mundet i Riera, \emph{A Hitchin-Kobayashi correspondence for K\"ahler fibrations}, J. reine angew. Math. \textbf{528}, 41-80(2000).

\bibitem{M}
I. Mundet i Riera, \emph{Hamiltonian Gromov-Witten invariants}, Topology \textbf{42}, 525-553(2003).

\bibitem{MT}
I. Mundet i Riera, G. Tian, \emph{A compactification of the moduli space of twisted holomophic maps}, Adv. Math. \textbf{222},
1117-1196(2009).

\bibitem{O}
A. Ott, \emph{Removal of singularities and Gromov compactness for symplectic vortices}, ArXiv preprint, arXiv:0912.2500 (2009).

\bibitem{P1}
T. Parker, \emph{A Morse theory for equivariant Yang-Mills}, Duke Math. J. \textbf{66}(2), 337-356(1992).

\bibitem{P}
T. Parker, \emph{Bubble tree convergence for harmonic maps}, J. Diff. Geom. \textbf{44}(3), 595-633(1996).

\bibitem{R}
T. Ratiu, \emph{The C. Neumann problem as a completely integrable system on an adjoint orbit}, Trans. Amer. Math. Soc. \textbf{264}, no. 2, 321¨C329(1981).
58F05 (70H05) 

\bibitem{RS}
T. Rivi{\'e}re and M. Struwe, \emph{Partial regularity for harmonic maps and related problems}, Comm. Math. Phys., \textbf{61}(4), 451-463(2008).

\bibitem{S}
C. Song, \emph{Critical points of Yang-Mills-Higgs functional}, Commun. Contemp. Math. \textbf{13}(3), 463-486(2011).

\bibitem{SS}
L. Sibner and R. Sibner, \emph{Classification of singular Sobolev connections by their holonomy}, Comm. Math. Phys., \textbf{144}, 337-350(1992).

\bibitem{ST}
B. Sharp and P. Topping, \emph{Decay estimates for Rivi{\`e}re's equation, with applications to regularity and compactness}, Trans. Amer.
Math. Soc., \textbf{365}(5), 2317-2339(2013).

\bibitem{SU}
J.~Sacks and K.~Uhlenbeck, \emph{The existence of minimal immersions of 2-spheres}, Ann. of Math.\textbf{113}(2), 1-24(1981).

\bibitem{T1}
C. Taubes, \emph{The existence of non-minimal solution to the $SU(2)$ Yang-Mills-Higgs equations on $\Real^3$: Part I, II}, Comm. Math. Phys.
\textbf{86}, 257-320(1982).

\bibitem{T2}
C. Taubes, \emph{Min-max theory for the Yang-Mills-Higgs equations}, Comm. Math. Phys. \textbf{97}(4), 473-540(1985).

\bibitem{TY}
G. Tian, B. Yang, \emph{Compactification of the moduli spaces of vortices and Coupled vortices}, J. Reine Angew. Math., \textbf{553},
17-41(2002) .

\bibitem{Uh}
K. Uhlenbeck, \emph{Connections with $L^p$ bounds on curvature}, Comm. Math. Phys. \textbf{83}, 31-42(1982).

\bibitem{V}
S. Venugopalan, \emph{Vortices on surfaces with cylindrical ends}, ArXiv preprint, arXiv:1312.1074 (2013).

\bibitem{W}
E. Witten, \emph{Topological Sigma Models}, Comm. Math. Phys. \textbf{118}, 411-449 (1988).

\bibitem{X}
G. Xu, \emph{Gauged Floer homology for Hamiltonian isotopies I: definition of the Floer homology groups}, ArXiv preprint, arXiv:1312.6923
(2013).

\bibitem{Zh}
X. Zhang, \emph{Compactness theorems for coupled Yang-Mills fields}. J. Math. Anal. Appl. \textbf{298}(1), 261-278(2004).

\bibitem{Z}
M. Zhu, \emph{Harmonic maps from degenerating Riemann surfaces}, Math. Z. \textbf{264}(1), 63-85(2010).

\bibitem{Zi}
F. Ziltener, \emph{A quantum Kirwan map: bubbling and fredholm theory for symplectic vortices over the plane}, ArXiv preprint,
arXiv:1209.5866 (2012).
\end{thebibliography}
\end{document}